\let\origsection=\section \def\section{\@ifstar{\origsection*}{\mysection}}
\def\mysection{\@startsection{section}{1}\z@{.7\linespacing\@plus\linespacing}{.5\linespacing}{\normalfont\scshape\centering\S}}
\renewcommand{\PrintDOI}[1]{\doi{#1}}
\numberwithin{equation}{section}
\numberwithin{figure}{section}
\def\rmlabel{\upshape({\itshape \roman*\,})}
\def\alabel{\upshape({\itshape \alph*\,})}
\def\nlabel{\upshape({\itshape \arabic*\,})}
\let\polishlcross=\l
\def\l{\ifmmode\ell\else\polishlcross\fi}
\def\tand{\ \text{and}\ }
\def\qand{\quad\text{and}\quad}
\def\qqand{\qquad\text{and}\qquad}
\let\emptyset=\varnothing
\let\setminus=\smallsetminus
\def\moverlay{\mathpalette\mov@rlay}
\def\mov@rlay#1#2{\leavevmode\vtop{   \baselineskip\z@skip \lineskiplimit-\maxdimen
   \ialign{\hfil$\m@th#1##$\hfil\cr#2\crcr}}}
\newcommand{\charfusion}[3][\mathord]{
    #1{\ifx#1\mathop\vphantom{#2}\fi
        \mathpalette\mov@rlay{#2\cr#3}
      }
    \ifx#1\mathop\expandafter\displaylimits\fi}
\newcommand{\dcup}{\charfusion[\mathbin]{\cup}{\cdot}}
\DeclareFontFamily{U}  {MnSymbolC}{}
\DeclareSymbolFont{MnSyC}         {U}  {MnSymbolC}{m}{n}
\DeclareFontShape{U}{MnSymbolC}{m}{n}{
    <-6>  MnSymbolC5
   <6-7>  MnSymbolC6
   <7-8>  MnSymbolC7
   <8-9>  MnSymbolC8
   <9-10> MnSymbolC9
  <10-12> MnSymbolC10
  <12->   MnSymbolC12}{}
\DeclareMathSymbol{\powerset}{\mathord}{MnSyC}{180}
\newcommand{\qedge}[7]{

	\ifx\relax#4\relax
		\def\qoffs{0pt}
	\else
		\def\qoffs{#4}
	\fi

	\def\qhedge{
		($#1+#3!\qoffs!-90:#2-#3$) --
		($#2+#1!\qoffs!-90:#3-#1$) --
		($#3+#2!\qoffs!-90:#1-#2$) -- cycle}

	\coordinate (12) at ($#1!\qoffs!90:#2$);
	\coordinate (13) at ($#1!\qoffs!-90:#3$);
	\coordinate (23) at ($#2!\qoffs!90:#3$);
	\coordinate (21) at ($#2!\qoffs!-90:#1$);
	\coordinate (31) at ($#3!\qoffs!90:#1$);
	\coordinate (32) at ($#3!\qoffs!-90:#2$);
	
	\def\nqhedge{
		(13) let \p1=($(13)-#1$), \p2=($(12)-#1$) in
			arc[start angle={atan2(\y1,\x1)}, delta angle={atan2(\y2,\x2)-atan2(\y1,\x1)-360*(atan2(\y2,\x2)-atan2(\y1,\x1)>0)}, x radius=\qoffs, y radius=\qoffs] --
		(21) let \p1=($(21)-#2$), \p2=($(23)-#2$) in
			arc[start angle={atan2(\y1,\x1)}, delta angle={atan2(\y2,\x2)-atan2(\y1,\x1)-360*(atan2(\y2,\x2)-atan2(\y1,\x1)>0)}, x radius=\qoffs, y radius=\qoffs] --
		(32) let \p1=($(32)-#3$), \p2=($(31)-#3$) in
			arc[start angle={atan2(\y1,\x1)}, delta angle={atan2(\y2,\x2)-atan2(\y1,\x1)-360*(atan2(\y2,\x2)-atan2(\y1,\x1)>0)}, x radius=\qoffs, y radius=\qoffs] --
		cycle}

		\ifx\relax#5\relax
		\def\qlwidth{1pt}
	\else
		\def\qlwidth{#5}
	\fi
	
		\ifx\relax#7\relax
		\fill \nqhedge;
	\else
		\fill[#7]\nqhedge;
	\fi

		\ifx\relax#6\relax
		\draw[line width=\qlwidth,rounded corners=\qoffs]\nqhedge;
	\else
		\draw[line width=\qlwidth,#6]\nqhedge;
	\fi
}
\newcommand{\redge}[8]{

		\ifx\relax#5\relax
		\def\qoffs{0pt}
	\else
		\def\qoffs{#5}
	\fi

				\def\rhedge{
		($#1+#4!\qoffs!-90:#2-#4$) -- 
		($#2+#1!\qoffs!-90:#3-#1$) -- 
		($#3+#2!\qoffs!-90:#4-#2$) -- 
		($#4+#3!\qoffs!-90:#1-#3$) -- cycle}

	\coordinate (12) at ($#1!\qoffs!90:#2$);
	\coordinate (14) at ($#1!\qoffs!-90:#4$);
	\coordinate (23) at ($#2!\qoffs!90:#3$);
	\coordinate (21) at ($#2!\qoffs!-90:#1$);
	\coordinate (34) at ($#3!\qoffs!90:#4$);
	\coordinate (32) at ($#3!\qoffs!-90:#2$);
	\coordinate (41) at ($#4!\qoffs!90:#1$);
	\coordinate (43) at ($#4!\qoffs!-90:#3$);
	
	\def\nrhedge{
		(14) let \p1=($(14)-#1$), \p2=($(12)-#1$) in 
			arc[start angle={atan2(\y1,\x1)}, delta angle={atan2(\y2,\x2)-atan2(\y1,\x1)-360*(atan2(\y2,\x2)-atan2(\y1,\x1)>0)}, x radius=\qoffs, y radius=\qoffs] --
		(21) let \p1=($(21)-#2$), \p2=($(23)-#2$) in 
			arc[start angle={atan2(\y1,\x1)}, delta angle={atan2(\y2,\x2)-atan2(\y1,\x1)-360*(atan2(\y2,\x2)-atan2(\y1,\x1)>0)}, x radius=\qoffs, y radius=\qoffs] --
		(32) let \p1=($(32)-#3$), \p2=($(34)-#3$) in 
			arc[start angle={atan2(\y1,\x1)}, delta angle={atan2(\y2,\x2)-atan2(\y1,\x1)-360*(atan2(\y2,\x2)-atan2(\y1,\x1)>0)}, x radius=\qoffs, y radius=\qoffs] --
		(43) let \p1=($(43)-#4$), \p2=($(41)-#4$) in 
			arc[start angle={atan2(\y1,\x1)}, delta angle={atan2(\y2,\x2)-atan2(\y1,\x1)-360*(atan2(\y2,\x2)-atan2(\y1,\x1)>0)}, x radius=\qoffs, y radius=\qoffs] --
		cycle}

		\ifx\relax#6\relax
		\def\rlwidth{1pt}
	\else
		\def\rlwidth{#6}
	\fi
	
		\ifx\relax#8\relax
		\fill \nrhedge;
	\else
		\fill[#8]\nrhedge;
	\fi

		\ifx\relax#7\relax
		\draw[line width=\rlwidth,rounded corners=\qoffs]\nrhedge;
	\else
		\draw[line width=\rlwidth,#7]\nrhedge;
	\fi
	}
\let\epsilon=\varepsilon
\let\rho=\varrho
\let\theta=\vartheta
\let\wh=\widehat
\def\eu{\mathrm e}
\def\EE{{\mathds E}}
\def\NN{{\mathds N}}
\def\PP{{\mathds P}}
\def\RR{{\mathds R}}
\def\Ind{{\mathds 1}}
\newcommand{\cA}{\mathcal{A}}
\newcommand{\cB}{\mathcal{B}}
\newcommand{\cP}{\mathcal{P}}
\newcommand{\cR}{\mathcal{R}}
\newcommand{\cS}{\mathcal{S}}
\newcommand{\ccA}{\mathscr{A}}
\newcommand{\ccB}{\mathscr{B}}
\newcommand{\ccC}{\mathscr{C}}
\newcommand{\ccP}{\mathscr{P}}
\newcommand{\ccW}{\mathscr{W}}
\newcommand{\gS}{\mathfrak{S}}
\newcommand{\gE}{\mathfrak{E}}
\newcommand{\gX}{\mathfrak{X}}
\newcommand{\gP}{\mathfrak{P}}
\newtheoremstyle{note}  {4pt}  {4pt}  {\sl}  {}  {\bfseries}  {.}  {.5em}          {}
\newtheoremstyle{introthms}  {3pt}  {3pt}  {\itshape}  {}  {\bfseries}  {.}  {.5em}          {\thmnote{#3}}
\newtheoremstyle{remark}  {2pt}  {2pt}  {\rm}  {}  {\bfseries}  {.}  {.3em}          {}
\theoremstyle{plain}
\newtheorem{theorem}{Theorem}[section]
\newtheorem{lemma}[theorem]{Lemma}
\newtheorem{prop}[theorem]{Proposition}
\newtheorem{cor}[theorem]{Corollary}
\newtheorem{claim}[theorem]{Claim}
\theoremstyle{note}
\newtheorem{dfn}[theorem]{Definition}
\newtheorem{setup}[theorem]{Setup}
\theoremstyle{remark}
\newtheorem{question}[theorem]{Question}
\newcommand*\patchAmsMathEnvironmentForLineno[1]{
\expandafter\let\csname old#1\expandafter\endcsname\csname #1\endcsname
\expandafter\let\csname oldend#1\expandafter\endcsname\csname end#1\endcsname
\renewenvironment{#1}
{\linenomath\csname old#1\endcsname}
{\csname oldend#1\endcsname\endlinenomath}}
\newcommand*\patchBothAmsMathEnvironmentsForLineno[1]{
\patchAmsMathEnvironmentForLineno{#1}
\patchAmsMathEnvironmentForLineno{#1*}}
\def\Ubad{U_\text{\rm bad}}
\newcommand{\overrighharpoonup}[1]{\ThisStyle{%
 \vbox {\m@th\ialign{##\crcr
 \rightharpoonupfill \crcr
 \noalign{\kern-\p@\nointerlineskip}
 $\hfil\SavedStyle#1\hfil$\crcr}}}}
\def\rightharpoonupfill{%
$\SavedStyle\m@th\mkern+0.8mu\cleaders\hbox{$\shortbar\mkern-4mu$}\hfill\rightharpoonuptip\mkern+0.8mu$}
\def\rightharpoonuptip{%
 \raisebox{\z@}[2pt][1pt]{\scalebox{0.55}{$\SavedStyle\rightharpoonup$}}}
\def\shortbar{%
 \smash{\scalebox{0.55}{$\SavedStyle\relbar$}}}
\let\seq=\overrighharpoonup
\def\sa{\seq a}
\def\sb{\seq b}
\def\su{\seq u}
\def\sw{\seq w}
\def\sx{\seq x}
\def\zetas{\zeta_{\star}}
\def\thetas{\theta_{\star}}
\def\zetass{\zeta_{\star\star}}
\def\thetass{\theta_{\star\star}}
\def\copr{\;\middle|\;}
\def\coprn{\mid}
\def\bcopr{\mathrel{\big|}}
\begin{document}
\dedicatory{Dedicated to Endre Szemer\'edi on the occasion of his $80^{\text{th}}$ birthday}

\title[Minimum pair degree for Hamiltonian cycles in $4$-uniform hypergraphs]{Minimum pair degree condition for tight Hamiltonian cycles in $4$-uniform hypergraphs}
\author[J.~Polcyn]{Joanna Polcyn}
\address{Adam Mickiewicz University, Faculty of Mathematics and Computer Science, Pozna\'n, Poland}
\email{joaska@amu.edu.pl}
\email{rucinski@amu.edu.pl}

\author[Chr.~Reiher]{Christian Reiher}
\address{Fachbereich Mathematik, Universit\"at Hamburg, Hamburg, Germany}
\email{christian.reiher@uni-hamburg.de}
\email{schacht@math.uni-hamburg.de}
\email{bjarne.schuelke@uni-hamburg.de}

\author[V.~R\"{o}dl]{Vojt\v{e}ch R\"{o}dl}
\address{Department of Mathematics, Emory University, Atlanta, USA}
\email{vrodl@emory.edu}
\thanks{The third author is supported by NSF grant DMS 1764385.}

\author[A.~Ruci\'nski]{Andrzej Ruci\'nski}

\thanks{The fourth author is supported by Polish NSC grant 2018/29/B/ST1/00426.}

\author[M.~Schacht]{Mathias Schacht}
\thanks{The fifth author is supported by the ERC (PEPCo 724903)}

\author[B. Sch\"ulke]{Bjarne Sch\"ulke}

%\subjclass[2020]{Primary: 05C65. Secondary: 05C45}
\subjclass[2010]{Primary: 05C65. Secondary: 05C45}
\keywords{Hamiltonian cycles, Dirac's theorem, hypergraphs}

\begin{abstract}
We show that every 4-uniform hypergraph with $n$ vertices and minimum pair degree 
at least $(5/9+o(1))n^2/2$ contains a tight Hamiltonian cycle. 
This degree condition is asymptotically optimal. 
\end{abstract}

\maketitle

\section{Introduction}
We study hypergraph generalisations of Dirac's theorem for graphs. 
For hypergraphs several extensions were considered and Endre Szemer\'edi has been an integral 
part and driving force for these developments. All but the last author already
had the pleasure to collaborate with and learn from Endre, while working on related (and unrelated) problems.

\subsection{Background and main result}
G.\,A.~Dirac~\cite{dirac} showed that every (finite) 
graph $G=(V,E)$ on at least $3$ vertices with minimum degree $\delta(G)\geq |V|/2$ contains a
Hamiltonian cycle. This result is clearly best possible, as exemplified by slightly off-balanced 
complete bipartite graphs. Several hypergraph extensions were suggested and considered in the 
literature. Here we focus on \emph{tight}
Hamiltonian cycles in uniform hypergraphs and we briefly review the relevant notation.

For an integer~$k\ge2$, a \emph{$k$-uniform hypergraph}  
is a pair~$(V,E)$, where the \emph{vertex set} $V$ is a finite set and the \emph{edge set}
$E\subseteq V^{(k)}=\{U\subseteq V\colon |U|=k\}$ is some collection of $k$-element subsets of $V$.
A \emph{tight Hamiltonian cycle} in a $k$-uniform hypergraph $H=(V,E)$ is given by a cyclic ordering 
of~$V$ such that every $k$ consecutive vertices (in the cyclic ordering) span a hyperedge from $E$.  
As usual for $k=2$, we recover the notion of finite, simple graphs and Hamiltonian cycles.

For $k>2$, large part of the research concerns sufficient minimum degree conditions in hypergraphs that 
guarantee the existence of tight Hamiltonian cycles (see, e.g., the surveys~\cites{RR-survey,Zhao-sur} 
and the references therein for a more thorough discussion). 
For a set of vertices $S\subseteq V$, the \emph{degree} in~$H$ 
is defined by 
\[
	d_H(S)=\big|\{e\in E\colon S\subseteq e\}\big| 
\]
and for an integer $j$ with $1\leq j\leq k$ the \emph{minimum $j$-degree} is defined by 
\[
	\delta_j(H)=\min\{d_H(S)\colon S\in V^{(j)}\}\,.
\]
The minimum $1$-degree $\delta_1(H)$ is often called minimum \emph{vertex degree}
and sometimes (in particular, in the context of graphs) we may
omit the subscript. Moreover, for $j=2$ we often refer to $\delta_2(H)$ as the minimum \emph{pair degree}.

Lower bounds on the minimum $j$-degree bear more information and restrictions for larger values 
of~$j$ and, in fact, sufficient minimum $(k-1)$-degree conditions for tight Hamiltonian cycles were considered first in the 
literature. This line of research was initiated by Katona and Kierstead~\cite{KK99}. In joint work with 
the two senior authors, Endre~\cites{RRS08} established the following asymptotically optimal result for $k$-uniform hypergraphs (see also~\cite{rrs3} for an earlier result for 
$3$-uniform hypergraphs and~\cite{3} for a sharp version of that result).
\begin{theorem}[R$^2$Sz, 2008]
\label{thm:apxDirac}
	For every integer $k\geq 3$ and $\alpha>0$, there exists an integer~$n_0$ such that every
	$k$-uniform hypergraph $H$ on $n\ge n_0$ vertices with 
	$\delta_{k-1}(H)\geq \bigl(\tfrac12+\alpha\bigr)n$ contains a tight Hamiltonian cycle.
	\qed
\end{theorem}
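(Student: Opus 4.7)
The plan is to apply the absorption method, which has become the standard approach for Dirac-type Hamiltonicity problems in hypergraphs. Fix $k \geq 3$ and $\alpha > 0$, choose auxiliary constants $\mu \ll \gamma \ll \alpha$ and a sufficiently large $n$, and let $H$ be an $n$-vertex $k$-uniform hypergraph with $\delta_{k-1}(H) \geq (1/2 + \alpha)n$.

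The first step is to prove a \emph{connecting lemma}: for most pairs of disjoint ordered $(k-1)$-tuples of vertices $X = (x_1, \dots, x_{k-1})$ and $Y = (y_1, \dots, y_{k-1})$, there exist at least $\Omega(n^{\ell - k + 1})$ tight paths of some bounded length $\ell = \ell(k, \alpha)$ that start with $X$, end with $Y$, and have all interior vertices in a prescribed large vertex set $W \subseteq V$. This follows by iterating the observation that every $(k-1)$-tuple has at least $(1/2 + \alpha)n$ common neighbors, so two such tuples can be joined by common extensions step by step, with room to spare.

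Second, I would establish an \emph{absorbing lemma}: for every vertex $v \in V$ there are polynomially many short tight paths $A_v$ whose vertex set augmented by $v$ still spans a tight path with the same endpoint $(k-1)$-tuples. By a random sampling argument combined with the connecting lemma, one can merge a small random collection of such absorbers into a single \emph{absorbing tight path} $P_A$ on $O(\mu n)$ vertices with the property that, for any $L \subseteq V \setminus V(P_A)$ with $|L| \leq \mu n$, the set $V(P_A) \cup L$ spans a tight path with the same endpoints as $P_A$. Before constructing $P_A$, I would also set aside a random \emph{reservoir} $R \subseteq V$ of size $\gamma n$, chosen so that the connecting lemma continues to hold with interior vertices restricted to $R$. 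On $V \setminus (V(P_A) \cup R)$, a hypergraph regularity lemma together with a long-path embedding argument covers all but at most $\mu n / 2$ vertices by a bounded number of long tight paths $P_1, \dots, P_t$. The reservoir-based connecting lemma then stitches $P_A, P_1, \dots, P_t$ into a single tight cycle $C^\ast$ on $V \setminus L$ with $|L| \leq \mu n$, and the absorbing property of $P_A$ inside $C^\ast$ swallows $L$ to produce a tight Hamiltonian cycle.

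The main obstacle is the absorbing lemma: each absorber must itself be a tight path whose interior accommodates both states (with $v$ inserted and without), while keeping its endpoint $(k-1)$-tuples fixed. Constructing enough such absorbers for every vertex, and then assembling them into one path without conflicts, uses the hypothesis $\delta_{k-1}(H) > n/2$ in a sharp way; below this threshold the natural extremal examples (two tightly connected cliques of size roughly $n/2$) destroy the symmetry required for absorption. A secondary technical challenge is the almost-covering step, which rests on a suitable hypergraph regularity lemma and a counting/embedding argument for long tight paths inside regular blocks of density close to one.
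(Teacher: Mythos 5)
Your proposal follows the same absorption-method blueprint as the cited proof in [RRS08]: a connecting lemma, an absorbing-path lemma, a reservoir, and an almost-spanning path cover, stitched into a Hamiltonian cycle by absorbing the leftover vertices. One small but meaningful simplification you miss: at the $\delta_{k-1}(H)\ge(\tfrac12+\alpha)n$ threshold, any two $(k-1)$-tuples have at least $2\alpha n$ common neighbours, so the connecting lemma holds for \emph{all} pairs of end-tuples, not merely ``most''; this is precisely what makes RRS08 cleaner than the later $5/9$-type results, which are forced to introduce a notion of ``connectable'' end-tuples and propagate that constraint through the absorbers and the covering step. Treating it as ``most pairs'' is not wrong, but it invites complications that the original argument avoids.
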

Theorem~\ref{thm:apxDirac} can be viewed as an approximate generalisation of Dirac's theorem from graphs to hypergraphs and, in fact, the lower bound constructions, that show the optimality of this result, exhibit a similar bipartite structure.

Given the `monotonicity' of the degree conditions, as a next step it seems natural to consider an extension of Theorem~\ref{thm:apxDirac} with a minimum $(k-2)$-degree condition. For such an extension we have to restrict ourselves to $k$-uniform hypergraphs for $k\geq 3$. Improving a series of partial results by several authors, 
for $3$-uniform hypergraphs the following asymptotically optimal result was recently obtained by Endre 
in collaboration with the middle four authors~\cite{old}. 

\begin{theorem}[R$^3$SSz, 2019]
	\label{thm:main3}
	For every $\alpha>0$, there exists an integer $n_0$ such that every
	$3$-uniform hypergraph $H$ on $n\ge n_0$ vertices with 
	$\delta_1(H)\geq \bigl(\tfrac59+\alpha\bigr)\tfrac{n^2}{2}$ contains a tight Hamiltonian cycle.
	\qed
\end{theorem}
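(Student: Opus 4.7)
The plan is to combine the absorption method with the hypergraph regularity lemma, following the same general template as the proof of Theorem~\ref{thm:apxDirac} but with a substantially more delicate extremal analysis forced by the weaker degree hypothesis. The first step would be to construct an \emph{absorbing} tight path $P_A$ of length $o(n)$ with the property that, for every sufficiently small $Z\subseteq V\setminus V(P_A)$, there is a tight path on $V(P_A)\cup Z$ with the same end-pairs as $P_A$. The standard route is to show that every pair of vertices $(u,v)$ admits $\Omega(n^{c})$ short absorbing configurations, then to pick a random family of absorbers and concatenate them along a short tight path using a connecting lemma. Under the hypothesis $\delta_1(H)\ge(5/9+\alpha)n^2/2$, one already needs a nontrivial argument to show that a typical pair has many common neighbours in the appropriate link.

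Next I would set aside a small random \emph{reservoir} $R\subseteq V\setminus V(P_A)$ and apply the $3$-uniform hypergraph regularity lemma (in the Frankl--R\"odl form) to the remainder. In the resulting reduced cluster hypergraph $\mathcal R$, the hypothesis transfers to an approximate minimum vertex degree of $5/9$. The crux of the proof is a \emph{connecting lemma} inside~$H$, guided by~$\mathcal R$, that allows arbitrary ordered pairs $(x_1,x_2)$ and $(y_1,y_2)$ drawn from appropriate clusters to be joined by a short tight path using vertices of~$R$. With such a connecting lemma in hand one covers all but $o(n)$ vertices by a long tight path, inserts~$P_A$, closes the cycle via~$R$, and absorbs the leftover.

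The threshold $5/9$ enters precisely at the connecting step: the known lower-bound construction is a specific blow-up hypergraph which is an obstruction to \emph{tight connectivity} of~$\mathcal R$, and the whole argument hinges on showing it is essentially the only such obstruction. Concretely, I would prove a structural dichotomy: either $\mathcal R$ contains a robustly tightly-connected sub-hypergraph on almost all of its clusters (in which case the absorption-plus-connecting scheme of the previous paragraph goes through), or else $H$ itself is $o(n^3)$-close to the extremal configuration and a tight Hamiltonian cycle can be built by hand, exploiting the near-partite structure of~$H$.

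The principal obstacle is this \textbf{extremal stability analysis} of~$\mathcal R$: one must classify, up to $o(1)$ error, the $3$-uniform hypergraphs of minimum vertex density at least $5/9$ that fail to be tightly connected, and then handle each near-extremal case by an independent ad hoc construction of the Hamiltonian cycle. This is what distinguishes Theorem~\ref{thm:main3} from the softer Theorem~\ref{thm:apxDirac}, whose threshold $1/2$ for the codegree lies far enough above the extremal density that no such case analysis is required; overcoming this obstacle for the pair-degree condition in $4$-uniform hypergraphs is the main business of the present paper.
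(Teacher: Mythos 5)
Theorem~\ref{thm:main3} is not proved in this paper; it is stated with a~\qed\ and cited from~\cite{old}, although the paper imports and extends much of that machinery in \S2. Your proposal is a legitimate template but takes a genuinely different, and considerably heavier, route than what~\cite{old} actually does. The proof in~\cite{old} avoids the hypergraph regularity lemma and any extremal stability dichotomy entirely. Instead, the pivotal move is made at the level of $2$-uniform link graphs: for each vertex~$v$, the link~$H_v$ has density exceeding~$5/9$ and therefore, by what appears here as Proposition~\ref{prop:robust}, contains a $(\beta,\ell)$-robust induced subgraph~$R_v$ on more than $2n/3$ vertices. A pair~$xy$ is declared connectable when $xy\in E(R_v)$ for a constant fraction of vertices~$v$ (Definition~2.5 here), the connecting lemma (Proposition~\ref{lem:con3}) then joins any two connectable pairs by many short tight paths, and the absorbing and covering lemmas are formulated exclusively in terms of connectable pairs, with a short counting argument (Lemma~\ref{F41}) controlling the number of exceptional triples. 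This works uniformly across the whole $5/9$ range, including the near-extremal constructions, because even those have abundant connectable pairs; no reduced cluster hypergraph and no case distinction are needed. Your regularity-plus-dichotomy plan confronts head-on two substantial difficulties that the link-graph argument sidesteps: transferring a vertex-degree hypothesis through the Frankl--R\"odl partition in a form usable for tight-path embedding is delicate, and the stability classification of near-extremal $3$-uniform hypergraphs with minimum vertex density $5/9$ that fail tight connectivity is itself a hard problem, which you correctly flag as the principal obstacle but leave unaddressed. By contrast, the actual proof's gain is precisely that replacing global connectivity by connectability of pairs (defined via robust link subgraphs) dissolves that obstacle rather than meeting it.
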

Again there are lower bound constructions, showing that the number $5/9$ in Theorem~\ref{thm:main3} is best possible. 
In fact, three structurally different examples can be found in~\cite{old}*{Example~1.2}. Here we 
extend Theorem~\ref{thm:main3} to $4$-uniform hypergraphs with a minimum pair degree condition
and establish the following result.

\begin{theorem}
	\label{thm:main}
	For every $\alpha>0$ there exists an integer $n_0$ such that every
	\mbox{$4$-uniform} hypergraph~$H$ on $n\ge n_0$ vertices with 
	$\delta_2(H)\geq \bigl(\tfrac59+\alpha\bigr)\tfrac{n^2}{2}$ contains a tight Hamiltonian cycle.
\end{theorem}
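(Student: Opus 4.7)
The strategy is to mirror the proof of Theorem~\ref{thm:main3} from $3$-uniform to $4$-uniform hypergraphs via the absorption method. Recall that in a $4$-uniform hypergraph a tight path $v_1\dots v_\ell$ has as its two \emph{ends} the ordered triples $(v_1,v_2,v_3)$ and $(v_{\ell-2},v_{\ell-1},v_\ell)$. The overall plan is to (a) set aside a short absorbing path $A$ with flexible ends, (b) reserve a small connector set $R\subseteq V(H)\smallsetminus V(A)$, (c) cover almost all remaining vertices by $O(1)$ vertex-disjoint tight paths, (d) glue the collection into a single tight path through $R$, and (e) use $A$ to swallow the last $o(n)$ leftover vertices and close to a cycle.

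The proof would decompose into four principal lemmas, each a $4$-uniform analogue of its $3$-uniform counterpart in~\cite{old}: a \emph{Connecting Lemma} asserting that any two disjoint ordered triples can be joined by a tight path of bounded length; a \emph{Reservoir Lemma} producing a small set $R$ in which any prescribed collection of bounded-length connections can be realised simultaneously; an \emph{Absorbing Path Lemma} producing $A$ via a random short path augmented with many ``absorber gadgets'' — short tight paths, each able to swallow one additional vertex while preserving its ends; and an \emph{Almost Cover Lemma} showing that outside $V(A)\cup R$ the hypergraph admits $O(1/\alpha)$ vertex-disjoint tight paths covering all but $o(n)$ vertices. Once these four statements are in hand, their standard assembly delivers the tight Hamiltonian cycle.

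The main obstacle is the Connecting Lemma. The essential difficulty is that the degree information lives on pairs while the ends of a tight path are triples, so one must repeatedly work with the $2$-uniform link graphs $L_{xy}$ on $V(H)\smallsetminus\{x,y\}$, defined by $\{u,v\}\in L_{xy}$ whenever $\{x,y,u,v\}\in E(H)$, and combine them consistently along the path being built. Since the threshold $5/9$ is sharp, any successful argument must rule out the extremal configurations of~\cite[Example~1.2]{old}, suitably lifted to the $4$-uniform pair-degree setting by replacing single vertices with pairs. I therefore anticipate a case split: in the \emph{non-extremal} regime the link graphs $L_{xy}$ are dense and well-distributed, so repeated density and intersection arguments permit a greedy or probabilistic construction of the connecting path; in the \emph{extremal} regime $H$ is close to one of the partition-based configurations, and a dedicated structural argument exploits this near-partition to construct connections directly. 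The Reservoir, Absorbing Path, and Almost Cover Lemmas should then follow from the Connecting Lemma by now-standard absorption-method techniques.
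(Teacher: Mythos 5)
The high-level absorption architecture you sketch matches the paper's, but there are several genuine gaps, the most serious of which concerns the Connecting Lemma.

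First, the Connecting Lemma as you state it --- ``any two disjoint ordered triples can be joined by a tight path of bounded length'' --- is simply false at pair degree $(5/9+o(1))n^2/2$. The extremal examples of Han and Zhao (the partition-based constructions in~\ref{it:ex-a}, \ref{it:ex-b}) admit triples that cannot be joined at all, and one can build hypergraphs just above the threshold in which a positive proportion of triples are isolated in this sense. The paper handles this by importing from~\cite{old} a hierarchical notion of \emph{connectable} objects: for each pair $uv$ one fixes a $(\beta,\ell)$-robust subgraph $R_{uv}$ of the $2$-uniform link $H_{uv}$ (Proposition~\ref{prop:robust}); a pair is connectable in a $3$-uniform link $\overline H_v$ if it lies in $R_{vw}$ for many $w$; a triple of $H$ is connectable (Definition~\ref{def:connectable4}) if, for many $v$, it is a \emph{bridge} (an edge whose two consecutive pairs are connectable) in $\overline H_v$. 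Proposition~\ref{lem:con} then connects only connectable triples, and all of the absorbing, reservoir, and covering machinery is carefully arranged so that every end-triple one ever needs to connect is indeed connectable. Your proposed remedy --- an extremal/non-extremal stability case split on the whole hypergraph --- is a different and much heavier route; the paper's robust-subgraph approach sidesteps a global structural classification entirely, and is what makes the argument tractable.

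Second, your absorber gadgets ``swallow one additional vertex while preserving their ends,'' but in the paper each absorber swallows \emph{four} vertices simultaneously. This is not cosmetic: the paper reduces the existence of the second part of the absorber to a degenerate Tur\'an problem via Erd\H os's theorem on complete $4$-partite $4$-uniform subhypergraphs of positive-density hosts, and the $4$-partite structure forces an absorption in blocks of four. This is exactly why Proposition~\ref{prop:absorbingP} imposes $|Z|\equiv 0\pmod 4$, and why Corollary~\ref{all4} must supply connecting paths whose numbers of inner vertices realise every residue class modulo four --- the final connection that closes the cycle is chosen to force the leftover count into $0\pmod 4$. A one-vertex-at-a-time absorber would avoid this divisibility constraint but would require a different and substantially harder existence argument.

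Third, the covering lemma does not produce $O(1/\alpha)$ paths. The paths in Proposition~\ref{prop:1625} have a fixed length $M$, chosen very large but constant, so there are $\Theta(n/M)$ of them, and the reservoir must support that many connections (hence the careful bookkeeping in Corollary~\ref{lem:use-reservoir}). This is why the Reservoir Lemma cannot be the trivial ``finitely many connections'' statement your outline suggests.
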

Theorem~\ref{thm:main} is also asymptotically best possible as the following examples
of Han and Zhao~\cite{HZ16} show:
\begin{enumerate}[label=\alabel]
\item\label{it:ex-a} 
	For simplicity let $n=|V|$ be divisible by three and consider a partition $X\dcup Y=V$ with $|X|=2n/3$.
	Let $H$ be the $4$-uniform 
	hypergraph $H=(V,E)$ with $e\in V^{(4)}$ being an edge of $H$ if, and only if,
	\begin{equation}\label{eq:count-ex-i}
		|e\cap X|\neq 2\,.
	\end{equation} 
	It is easy to check that $H$ satisfies $\delta_2(H)\geq \bigl(\tfrac59-o(1)\bigr)\tfrac{n^2}{2}$. 
	
	Suppose for the sake of contradiction that~$H$ contains a tight Hamiltonian cycle~$C$. 
	Since every vertex of $C$ is 
	contained in precisely four edges of $C$, we have
	\[
		\sum_{f\in E(C)}|f\cap X|=4\,|X|\,.
	\]
	Hence, the average intersection of an edge of $C$ with~$X$ is~$8/3$.
	In particular, there exist two edges $f$ and $f'$ in $C$ such that 
	\[
		|f\cap X|\leq 2=\lfloor\tfrac{8}{3}\rfloor
		\qqand
		|f'\cap X|\geq 3=\lceil\tfrac{8}{3}\rceil
	\]
	and the definition of $H$ implies that  
	$f$ shares at most one vertex with $X$. 
	
	On the other hand, the sizes of the intersections in $X$ of two 
	consecutive edges in~$C$ (in the induced cyclic order) can differ by at most one. Consequently, 
	the lack of edges in~$E$ intersecting $X$ in exactly two vertices makes the occurrence of the 
	edges~$f$ and~$f'$ in $C$ impossible.
\item\label{it:ex-b} The same construction with~\eqref{eq:count-ex-i} replaced by $|e\cap X|\neq 3$ 
	yields another hypergraph exemplifying a matching lower bound for 
	Theorem~\ref{thm:main} by a similar argument.
\end{enumerate}

The type of construction used in~\ref{it:ex-a} and~\ref{it:ex-b} above 
generalises to arbitrary uniformities~$k\geq 3$. In fact, if $3\divides k$ then this gives rise to 
three structurally different lower bound constructions and if $3\ndivides k$ then two hypergraphs arise  (see~\cite{HZ16}*{Corollary~1.6} for details). Those examples show that 
the optimal minimum  $(k-2)$-degree for tight Hamiltonian cycles in $k$-uniform hypergraphs on $n$ vertices is at least 
$\bigl(\tfrac59-o(1)\bigr)\tfrac{n^2}{2}$. 

The results discussed so far address special cases of the following more general problem:
Given integers $k>r\ge 1$, determine the infimal real number $\alpha^{(k)}_r\in [0, 1]$ with 
the property that every $k$-uniform hypergraph $H=(V, E)$ satisfying the minimum $r$-degree 
condition $\delta_r(H)\ge \bigl(\alpha^{(k)}_r+o(1)\bigr)|V|^{k-r}/(k-r)!$ contains a 
Hamiltonian cycle. 
Thus Dirac's theorem and Theorem~\ref{thm:apxDirac} assert $\alpha^{(k)}_{k-1}=1/2$ for $k\ge 2$, 
while the Theorems~\ref{thm:main3} and~\ref{thm:main} entail $\alpha^{(3)}_1=\alpha^{(4)}_2=5/9$.
These results might indicate that $\alpha^{(k)}_r$ might be determined by the difference 
$d=k-r$, which leads to the following question.

\begin{question}
	Given a positive integer $d$, does there exist a constant $\beta_d$ such that 
	$\alpha^{(k)}_{k-d}=\beta_d$ holds for every $k\ge d+1$?	
\end{question}
We are not aware of any counterexample and for $d=1$ 
Theorem~\ref{thm:apxDirac} states that $\beta_1=1/2$. Moreover, very recently 
Theorems~\ref{thm:main3} and~\ref{thm:main} were extended for arbitrary $k\geq 5$ in~\cite{TYH}
and $\beta_2=5/9$ was established.
The lower bounds on $\alpha^{(k)}_r$ obtained by Han and Zhao~\cite{HZ16} might be optimal 
for all $k>r\ge 1$. In this case, all numbers $\beta_d$ would exist and the next problem 
would be to decide whether $\beta_3=5/8$. 

\subsection{Overview and organisation}
The proof of Theorem~\ref{thm:main} is based on the \emph{absorption method}.
This method has been introduced more than a decade ago in~\cites{rrs3} (see also the survey~\cite{Sz-surv} of Endre Szemer\'edi) and
since then it has turned out to be a versatile tool for solving a  
variety of problems concerning the existence of spanning structures in graphs 
and hypergraphs.
Proofs 
based on the absorption method usually decompose the problem at hand into several 
more manageable subproblems. In results on Hamiltonian cycles in hypergraphs
such as Theorem~\ref{thm:main} most of the effort is usually directed towards showing 
a {\it connecting lemma}, an {\it absorbing lemma}, and a {\it covering lemma}.

The complexity of the first two ingredients has evolved over time. For instance,
in the proof of Theorem~\ref{thm:apxDirac} for $k=3$ in~\cite{rrs3},
the connecting lemma just said that every pair 
of vertices can be connected to any other pair of vertices by means of a relatively short tight
path. An analogous result is not available when proving  Theorem~\ref{thm:main3}
(see~\cite{old}).
Instead, one defines a sufficiently broad class of so-called {\it connectable pairs} 
of vertices and, roughly speaking, the connecting lemma of~\cite{old} asserts that any 
such connectable pair can be reached from any other connectable pair by means of a short 
tight path. This idea will be reused below, so we shall define a notion of connectable triples 
in $4$-uniform hypergraphs of large pair degree and our connecting lemma 
(Proposition~\ref{lem:con} below) claims that any two such triples can be connected by means 
of a short tight path.    
 
As for the absorbing lemma (see Proposition~\ref{prop:absorbingP}), one needs to establish 
the existence of a so-called 
{\it absorbing path} $P_A$ capable of absorbing any ``small'' set of left-over 
vertices~$Z$. More precisely, no matter which small set~$Z$ of vertices needs attention 
in the end of the argument there always is a path with vertex set $V(P_A)\cup Z$
which starts and ends with the same vertices as $P_A$ itself. Such a path $P_A$ 
is usually constructed by taking several small building blocks called {\it absorbers}
and connecting them by appealing to the connecting lemma. Proving the existence of 
suitable absorbers has often been among the main difficulties in applying the absorption method. 
Recently, the first two authors, while studying a related problem, observed that in many cases this problem 
can be reduced to a degenerate Tur\'an-type problem~\cite{PR}. In fact, ignoring for a moment the issue that the 
absorbers need to be connectable into a tight path, their existence is a direct consequence 
of a classical extremal result of Erd\H os~\cite{E64}, for the small price that the size of $Z$ needs to satisfy 
an additional divisibility assumption (see \S\,\ref{sec:abs_intro} for more details).

Finally, the covering lemma (see Proposition~\ref{prop:1625}) asserts, in particular, 
that the minimum pair degree condition considered in Theorem~\ref{thm:main} 
ensures the existence of an almost perfect path cover.
Then the connecting lemma allows us to connect the paths from the cover together with $P_A$.
In fact, there even exists a cycle~$C$ containing paths from the cover and the absorbing path $P_A$
for which the (small) set $Z=V(H)\setminus V(C)$ of remaining vertices satisfies the aforementioned divisibility condition. 
Now, to complete the proof of Theorem~\ref{thm:main} one just needs to absorb the vertices outside $C$ into 
the absorbing path. 

As mentioned above, the proof of Theorem~\ref{thm:main} presented here reuses some ideas and results from~\cite{old} and we collect the relevant material in the next section. Sections~\ref{conn}\,--\,\ref{sec:long_path} establish the 
connecting lemma, absorbing lemma, covering lemma, and the so-called \emph{reservoir lemma}, 
which ensures that the short tight paths used for the connections are always vertex disjoint from the rest. 
In Section~\ref{sec:main-pf} we then present the somewhat standard proof of Theorem~\ref{thm:main} based on these lemmata. 
 
\section{Preliminaries}
\subsection{Notation}
Besides graphs, we mainly consider 3-uniform and 4-uniform hypergraphs,
and here we briefly recall some relevant definitions.
For simplicity, if there is no danger of confusion 
we sometimes omit parentheses, braces, and commas and denote edges $\{x,y\}$, $\{x,y,z\}$, or~$\{x,y,z,w\}$
in graphs and $3$- and $4$-uniform hypergraphs by $xy$, $xyz$, or $xyzw$, respectively. 

\subsubsection*{Walks, paths, and cycles}
We shall only consider tight walks, paths, and cycles and for simplicity we omit the word tight from now on.
The length of a walk, a path, or a cycle is measured by its number of edges. 

For $3$-uniform hypergraphs 
a \emph{walk~$W$} of length $\l\geq 0$ is given by a sequence $(x_1,\dots,x_{\l+2})$ of vertices such that 
$e\in E(W)$ if and only if $e=x_ix_{i+1}x_{i+2}$ for some $i\in[\l]$. 
The ordered pairs $(x_1,x_2)$ and $(x_{\l+1},x_{\l+2})$
are the \emph{end-pairs} of~$W$ and we say $W$ is a $(x_1,x_2)$-$(x_{\l+1},x_{\l+2})$-walk.  
This definition of end-pairs is not symmetric and implicitly fixes a direction 
of~$W$ and sometimes we may refer to $(x_1,x_2)$ and $(x_{\l+1},x_{\l+2})$ as \emph{starting pair} 
and  \emph{ending pair}, respectively. The vertices $x_3,\dots,x_\l$ are the 
{\it inner vertices} of $W$ and in the context of walks 
we count the inner vertices with their multiplicities, i.e., for $\l\geq 2$ a walk of length~$\l$ has~$\l-2$ 
inner vertices. We often identify a walk 
with the sequence of its vertices $x_1x_2\dots x_{\l+2}$ and refer to it as a $x_1x_2$-$x_{\l+1}x_{\l+2}$-walk.

A walk $W$ is a \emph{path} if all the vertices $x_1,\dots,x_{\l+2}$ are distinct and it is a \emph{cycle} if 
the vertices $x_1,\dots,x_{\l}$ are distinct and $x_{\l+1}=x_1$ and $x_{\l+2}=x_2$.
 
These definitions extend to $4$-uniform hypergraphs in a straightforward way. In this context a walk of length $\l$ 
is given by a sequence of $\l+3$ vertices and the \emph{end-triples} are the subsequences of the first and the 
last three vertices.

\subsubsection*{Links of vertices and pairs}
We recall that the \emph{link graph} of a vertex 
$v$ of a $3$-uniform hypergraph~$H$ is defined to be the graph $H_v$ with the 
same vertex set as $H$ and with 
\[
	E(H_v)=\{xy\colon vxy\in E(H)\}\,.
\] 
Similarly, for a $4$-uniform hypergraph~$H$ the link $H_v$ of a vertex $v$ 
is a $3$-uniform hypergraph on the same vertex set 
with $E(H_v)=\{xyz\colon vxyz\in E(H)\}$.
Moreover, for an unordered pair of distinct vertices $u$ and $v$ the \emph{link of the pair $uv$}
is the graph $H_{uv}$ with vertex set~$V(H_{uv})=V(H)$
and edge set
\[
	E(H_{uv})=\{xy\colon uvxy\in E(H)\}\,.
\]

\subsection{Robust subgraphs}
\label{subsec:2239}
Both in the $3$-uniform predecessor~\cite{old} of this work and here 
the connecting lemma is deduced from certain connectivity properties of $2$-uniform 
link graphs. In the present subsection we discuss the graph theoretic result we shall
require for this purpose. We begin with the key notion in this regard 
(cf.~\cite{old}*{Definition~2.2}).   

\begin{dfn} \label{dfn:robust}
		Given $\beta>0$ and $\ell\in\NN$ a graph~$R$ is said to 
		be {\it $(\beta, \l)$-robust} 
		if for any two distinct vertices~$x$ and~$y$ of~$R$ the number of $x$-$y$-paths of 
		length $\ell$ is at least $\beta |V(R)|^{\ell-1}$.
	\end{dfn}
	
The main point is that graphs whose density is larger than $5/9$ possess 
sufficiently dense robust subgraphs containing more than two thirds of the 
vertices. The following result along those lines is a slight strengthening  
of~\cite{old}*{Proposition~2.3} and below we shall only indicate how the arguments 
in~\cite{old} can be modified so as to yield the present version. 

\begin{prop} \label{prop:robust}
	Given $\alpha$, $\mu >0$, there exist $\beta>0$ and an odd integer $\ell\ge 3$
	such that for sufficiently large~$n$, every $n$-vertex graph $G=(V, E)$ 
	with $|E|\ge \left(\frac{5}{9}+\alpha\right)\frac{n^2}{2}$
	contains a $(\beta,\l)$-robust induced subgraph $R\subseteq G$ satisfying
		\begin{enumerate}[label=\rmlabel]
			\item\label{it:rc1} $|V(R)|\ge \bigl(\frac23+\frac\alpha 2\bigr)n$,
			\item\label{it:rc2} $e_G\big(V(R),V\setminus V(R)\big)\leq 
				\mu n^2$, 			
			\item\label{it:rc3} and $e(R)\ge 
				\left(\frac{5}{9}+\frac{\alpha}{2}\right)\frac{n^2}{2}-\frac{(n-|V(R)|)^2}{2}
				\ge\left(\frac49+\frac23\alpha\right)\frac{n^2}{2}$.
		\end{enumerate}
\end{prop}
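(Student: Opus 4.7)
The proposition is a slight strengthening of \cite{old}*{Proposition~2.3}: condition (i) is essentially as in that result, while (iii) sharpens the edge count and (ii) is new. I would first observe that (iii) is essentially automatic from (ii). Writing $S=V\setminus V(R)$ and $s=|S|$,
\begin{align*}
 e(R) &= e(G)-e_G(V(R),S)-e(S)\\
 &\geq \bigl(\tfrac{5}{9}+\alpha\bigr)\tfrac{n^2}{2}-\mu n^2-\tfrac{s^2}{2},
\end{align*}
and taking $\mu\le \alpha/4$ yields the first bound of (iii); the second bound then follows by plugging in the size estimate $s\le(1/3-\alpha/2)n$ from (i). Hence the work reduces to producing~$R$ satisfying (i), (ii), and the robustness property.

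For (i) together with robustness, I would invoke the argument of~\cite{old}: assume for contradiction that no induced subgraph of~$G$ of size at least $(2/3+\alpha/2)n$ is $(\beta,\l)$-robust for some odd $\l\ge 3$ and any $\beta>0$ sufficiently small in $\alpha$, and then run the stability dichotomy of that paper. It forces $G$ to approximately agree with one of the $5/9$-extremal constructions --- essentially the cascades of cliques and complete bipartite graphs appearing in~\cite{HZ16} --- which contradicts the hypothesis $|E|\geq(5/9+\alpha)\tfrac{n^2}{2}$. This produces the required $R$ together with a $\beta>0$ and an odd integer $\l\ge 3$ depending only on $\alpha$.

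Condition~(ii) would be added by refining the final extraction step. The natural candidate for~$R$ emerging from the stability analysis should be realised as a union of clusters in a Szemer\'edi-regular partition of~$G$, with $V(R)$ corresponding to the ``dense'' part of the reduced graph. The cut $e_G(V(R),S)$ then consists entirely of edges in irregular pairs or in pairs of density below the reduced-graph threshold, and both contributions total $O(\epsilon)n^2$; choosing the regularity parameter $\epsilon=\epsilon(\mu,\alpha)$ small enough gives $e_G(V(R),S)\le \mu n^2$. The main obstacle I anticipate lies not in the graph-theoretic heart of the argument, which is already present in~\cite{old}, but in checking compatibility: one must verify that the extraction step in~\cite{old} can be realigned to make $V(R)$ a union of regularity clusters while keeping the $(\beta,\l)$-robustness intact. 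I expect this to be a routine consequence of the counting lemma for walks in regular pairs, applied once at the last step.
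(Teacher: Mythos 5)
Your reduction of~(iii) to~(i) and~(ii) is correct (and essentially what the paper does, modulo replacing $\mu$ by $\min\{\mu,\alpha/4\}$, which is harmless since the cut condition only gets easier as~$\mu$ grows). The rough description of the mechanism for~(i) and robustness --- invoke the argument of~\cite{old} to extract a dense, "unsplittable" induced subgraph --- is also the right flavor, though the paper's proof goes through the notion of $\mu'$-inseparability (\cite{old}*{Definition~3.1, Lemma~3.2}) rather than a stability dichotomy against the~\cite{HZ16} extremal configurations.

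The gap is in your treatment of~(ii), and it stems from a misreading of what~\cite{old} already provides. You flag~(ii) as "new" and propose to add it by rebuilding~$R$ as a union of clusters in a Szemer\'edi-regular partition, then arguing that the cut edges live in irregular or sparse pairs. You explicitly leave unverified whether this re-alignment is compatible with the extraction step of~\cite{old} and with robustness --- so as written this is a sketch of a plan, not a proof. More importantly, the heavy machinery is unnecessary: the proof of~\cite{old}*{Lemma~3.2} \emph{already} produces, for any fixed $\mu'\le\alpha/72$, an induced subgraph~$R$ satisfying~(i), the first estimate in~(iii), $\mu'$-inseparability, \emph{and} the cut bound $e_G\bigl(V(R),V\setminus V(R)\bigr)<4\mu' n^2$. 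In~\cite{old} this was only applied with $\mu'=\alpha/72$, so the cut bound did not surface in the statement of~\cite{old}*{Proposition~2.3}; but the construction is uniform in~$\mu'$. The entire content of the present strengthening is then to set $\mu'=\min\{\mu/4,\alpha/72\}$, which immediately yields~(ii), and to note that $(\beta,\ell)$-robustness follows from $\mu'$-inseparability together with the density bound supplied by~(i) and~(iii), for $\ell$ the least odd integer exceeding $8(1/\mu')^2+1$ and $\beta=\tfrac1{72}(\mu'/2)^{6\ell}$. No regularity lemma enters, and no re-engineering of the extraction step is needed. Your proposal therefore has a genuine gap: the key step~(ii) is asserted via an unchecked and substantially more complicated route, while the actual argument is a one-line parameter adjustment you did not identify.
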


\begin{proof}
	We may assume $\alpha\leq4/9$, for otherwise there are no $n$-vertex
	graphs~$(V, E)$ satisfying $|E|\ge \left(\frac{5}{9}+\alpha\right)\frac{n^2}{2}$
	and there is nothing to show.
	The proof of~\cite{old}*{Lemma~3.2} shows for every fixed $\mu'\le\alpha/72$
	that every graph $G=(V, E)$ on $n\gg 1/\mu'$ vertices such that 
	$|E|\ge \left(\frac{5}{9}+\alpha\right)\frac{n^2}{2}$ has 
	an induced subgraph $R$ satisfying~\ref{it:rc1}, 
	\begin{equation}\label{eq:2350}
		e_G\big(V(R),V\setminus V(R)\big) < 4\mu' n^2\,, 
	\end{equation}
	and the first estimate in~\ref{it:rc3} which, moreover, has a property 
	called $\mu'$-inseparability (see~\cite{old}*{Definition~3.1}). 
	For the purposes of~\cite{old} it was enough to apply this fact to $\mu'=\alpha/72$
	itself, but here it will be more convenient to set $\mu'=\min\{\mu/4, \alpha/72\}$, which 
	causes~\eqref{eq:2350} to imply~\ref{it:rc2}. The second estimate in~\ref{it:rc3}
	is an immediate consequence of~\ref{it:rc1} and of $\alpha\leq 4/9<2/3$.
		
	It remains to show that $R$ is indeed $(\beta, \ell)$-robust for some constants
	$\beta$ and $\ell$ that only depend on $\alpha$ and $\mu$ but not on $n$. As the 
	proof of~\cite{old}*{Proposition~2.3} shows, this follows from the $\mu'$-inseparability 
	of $R$ combined with the fact that~\ref{it:rc1} and~\ref{it:rc3} allow us to 
	bound the density of $R$ from below. In fact, it is enough to let
	$\ell$ be the least odd integer such that 
	\[
		\ell >8\left(\frac{1}{\mu'}\right)^{2}+1
		\quad \text{ and to set } \quad 
		\beta =\frac 1{72}\left(\frac{\mu'}2\right)^{6\ell}\,. \qedhere
	\]
\end{proof}

The next result will assist us (indirectly via Lemma~\ref{L35}) in Section~\ref{sec:Abpa} when we wish to 
ensure that the end-triples of our absorbers are connectable. Notice that 
the assumptions on $R$ are like clause~\ref{it:rc1} and the special case 
$\mu=\alpha/4$ of  
clause~\ref{it:rc2} of Proposition~\ref{prop:robust}.

\begin{lemma}\label{lem:L36}
	Given $\alpha>0$, let $G=(V,E)$ and $G'=(V,E')$ be two graphs on the same $n$-element 
	vertex set, each with at least $(5/9+\alpha)n^2/2$ edges. Let $R$ be a  
	subgraph of $G$ induced by a set $U=V(R)\subseteq V$ with 
	$|U|\ge 2n/3$ that satisfies 
	$e_{G}(U, V\setminus U)\le \alpha n^2/4$.
	Then
	\begin{equation}\label{eq:uzzy}
		\big|\{(u,v)\in U^2\colon uv\in E\cap E' 
		\tand d_R(v)>n/3\}\big|
		\ge
		\frac34\alpha n^2\,.
	\end{equation}
\end{lemma}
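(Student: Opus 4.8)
The plan is to reduce the desired lower bound to an upper bound on the number of edges of $G$ inside the set of vertices of small $R$-degree, and then to prove that upper bound by playing off a trivial degree estimate against one that uses the density of $R$.

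First I would set $\bar U=V\setminus U$, $\bar u=|\bar U|$ and split $U=U^+\dcup W$, where $W=\{v\in U\colon d_R(v)\le n/3\}$ and $U^+=\{v\in U\colon d_R(v)>n/3\}$; call a pair counted in~\eqref{eq:uzzy} \emph{good}. Every edge of $E\cap E'$ with both endpoints in $U$ yields at least one good pair unless both of its endpoints lie in $W$, so, writing $m$ for the number of edges of $E\cap E'$ inside $U$ and $e_G(W)$ for the number of edges of $G$ inside $W$, there are at least $m-e_G(W)$ good pairs. Since $E\cap E'\subseteq E$, the edges of $E\cap E'$ meeting $\bar U$ number at most $e_G(U,\bar U)+\binom{\bar u}2\le\alpha n^2/4+\bar u^2/2$, while inclusion--exclusion gives $|E\cap E'|\ge|E|+|E'|-\binom n2\ge(1/18+\alpha)n^2$; hence $m\ge\bigl(\tfrac1{18}+\tfrac{3\alpha}4\bigr)n^2-\tfrac{\bar u^2}2$. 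It therefore suffices to show
\[
	e_G(W)\le\tfrac{n^2}{18}-\tfrac{\bar u^2}{2}\,.
\]

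To this end write $w=|W|/n$ and $\beta=\bar u/n\in[0,1/3]$, so that $|U^+|=(1-\beta-w)n$. The first bound is $2e_G(W)\le\sum_{v\in W}d_R(v)\le|W|n/3$, i.e.\ $e_G(W)\le wn^2/6$. For the second, I would use $e_G(U)=e_G(W)+e_G(U^+,W)+e_G(U^+)$, the identity $2e_G(W)+e_G(U^+,W)=\sum_{v\in W}d_R(v)\le|W|n/3$, the estimate $e_G(U^+)\le|U^+|^2/2$, and the lower bound $e_G(U)=|E|-e_G(U,\bar U)-e_G(\bar U)\ge\bigl(\tfrac5{18}+\tfrac\alpha4\bigr)n^2-\tfrac{\bar u^2}2$ to get, after a short rearrangement and division by $n^2$, that $e_G(W)/n^2\le\tfrac w3+\tfrac12\bigl((1-\beta-w)^2+\beta^2\bigr)-\tfrac5{18}-\tfrac\alpha4$. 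It then remains to check that the smaller of these two bounds never exceeds $\tfrac1{18}-\tfrac{\beta^2}2$: for $w\le\tfrac13-3\beta^2$ this is immediate from the first bound, while for $w\in[\tfrac13-3\beta^2,1-\beta]$ the second bound being $\le\tfrac1{18}-\tfrac{\beta^2}2$ amounts to $L(w):=\tfrac w3+\tfrac12(1-\beta-w)^2\le\tfrac13-\beta^2+\tfrac\alpha4$, and since $L$ is convex it suffices to verify this at the two endpoints, where $L(1-\beta)=\tfrac13-\tfrac\beta3$ and $L\bigl(\tfrac13-3\beta^2\bigr)=\tfrac19-\beta^2+\tfrac12\bigl(\tfrac23-\beta+3\beta^2\bigr)^2$, both of which are at most $\tfrac13-\beta^2+\tfrac\alpha4$ because $\beta^2\le\beta/3$ and $0\le\tfrac23-\beta+3\beta^2\le\tfrac23$ on $[0,1/3]$. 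This gives $e_G(W)\le\tfrac{n^2}{18}-\tfrac{\bar u^2}2$, and hence at least $m-e_G(W)\ge\tfrac34\alpha n^2$ good pairs.

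The only real difficulty is the estimate on $e_G(W)$. The trivial degree bound alone merely forces $|W|\lesssim\bigl(\tfrac23-\tfrac{3\alpha}2\bigr)n$, which is far too weak for the conclusion; one genuinely has to exploit that a large $W$ makes $R$ so dense on $U^+$ that $W$ is nearly independent in $G$, which is exactly what the second bound quantifies. Everything else reduces to routine manipulations with inclusion--exclusion and the crossing-edge hypothesis $e_G(U,V\setminus U)\le\alpha n^2/4$.
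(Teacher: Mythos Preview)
Your proof is correct. It uses the same ingredients as the paper's argument---inclusion--exclusion on $|E\cap E'|$, the crossing-edge hypothesis, and the degree constraint $d_R(v)\le n/3$ on $W$---but packages them differently. The paper sets $Z=U^+$ and splits upfront on whether $|Z|\ge 2n/3$: in the large-$Z$ case it bounds the ordered pairs in $E\cap E'$ missing $Z$ by $|V\setminus Z|^2\le n^2/9$ directly, and in the small-$Z$ case it lower-bounds $\sum_{z\in Z}d_R(z)$ from $2e(R)$ and then applies inclusion--exclusion with $E'$. You instead reduce everything to the single target $e_G(W)\le n^2/18-\bar u^2/2$ and split on $w$; your small-$w$ regime corresponds to the paper's large-$Z$ case, and your second bound is the dual of the paper's degree-sum argument (you upper-bound $e_G(W)$ via $e_G(U)-e_G(U^+)-e_G(U^+,W)$ rather than lower-bound the edges incident to $Z$). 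Your organisation is a bit more uniform and makes the role of convexity explicit; the paper's version avoids the endpoint calculus by handling the two regimes with separate, more direct estimates. Neither approach gains a quantitative advantage.
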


\begin{proof} Let $Z=\{z\in U\colon d_R(z)>n/3\}$.
	We shall show
	\begin{equation}\label{eq:xyxy}
		\big|\{xy\in E\cap E'\colon 
			x,y\in U\mbox{ and } \{x,y\}\cap Z\ne\emptyset\}\big|
		\ge
		\frac34\alpha n^2\,.
	\end{equation}
 	Since every (unordered) edge~$xy$ counted here corresponds to one 
	or two ordered pairs~$(u, z)$ counted on the left side of~\eqref{eq:uzzy}
	(depending on whether only one or both of $x$, $y$ are in~$Z$), this will
	imply the desired estimate~\eqref{eq:uzzy}. For the proof of~\eqref{eq:xyxy} 
	we let $\eta\in[2/3, 1]$
	and $\tau\in[0, 1]$ be defined by 
	\[
		|U|=\eta n\qqand|Z|=\tau n\,.
	\] 
	We consider two cases depending on the value of~$\tau$.

	\smallskip
	{\it First Case. We have $\tau\ge 2/3$.}\\
	Owing to
	\[
		\big|\{(x,y)\in V^2\colon xy\in E \cap E'\}\big|
		\ge
		2\,|E|+2\,|E'|-n^2
		\ge
		\big(\tfrac{1}{9}+2\alpha\big)n^2
	\]
	and $|V\setminus Z|^2=(1-\tau)^2n^2\le n^2/9$ we have
	\[
		\big|\{(x,y)\in V^2\colon xy\in E\cap E' \mbox{ and }
			\{x,y\}\cap Z\neq\emptyset\}\big|
		\ge
		2\alpha n^2\,.
	\]
	Recall that $Z\subseteq U$. So if $\{x,y\}\cap Z\neq\emptyset$, 
	but $\{x,y\}\not\subseteq U$, then one of the vertices $x$, $y$ is in $U$ while the other 
	one is in $V\setminus U$, whence
	\[
		\big|\{(x,y)\in V^2\colon xy\in E\,,
			\{x,y\}\cap Z\neq\emptyset, \tand
			\{x,y\}\not\subseteq U\}\big|
		\le
		2\,e_{G}(U, V\setminus U)
		\le \frac{\alpha}{2} n^2\,.
	\]
	Consequently, the number of (unordered) edges $xy$ considered on the 
	left-hand side of~\eqref{eq:xyxy} is at least 
	$\frac12(2\alpha n^2-\alpha n^2/2)=3\alpha n^2/4$, as desired. 

	\smallskip
	{\it Second Case. We have $\tau < 2/3$.}\\
	Notice that 
	\[
		2\,e(R)
		\ge
		2\big(e(G)-e_{G}(U, V\setminus U)-e_{G}(V\setminus U)\big)
		\ge
		\Big(\frac{5}{9}+\alpha-\frac\alpha2-(1-\eta)^2\Big)n^2\,.
	\]
	Together with $(1-\eta)(2/3-\eta)\le 0$ this yields
	\begin{equation}\label{eq:1541}
		2\,e(R)
		\ge
		\Big(\frac{5}{9}+\frac\alpha2-(1-\eta)^2+(1-\eta)\Big(\frac{2}{3}-\eta\Big)\Big)n^2
		=
		\Big(\frac{2}{9}+\frac{\alpha}{2}+\frac{\eta}{3}\Big)n^2\,.
	\end{equation}
	On the other hand, the definition of $Z$ leads to
	\begin{equation}\label{eq:1544}
		2\,e(R)
		=
		\sum_{z\in Z}d_R(z)+\sum_{z\in U\setminus Z}d_R(z)
		\le
		\sum_{z\in Z}d_R(z)+\frac{(\eta-\tau)}3n^2\,.
	\end{equation}
	Comparing~\eqref{eq:1541} and~\eqref{eq:1544} we deduce
	\[
		\sum_{z\in Z}d_R(z)
		\ge
		\Big(\frac{2}{9}+\frac{\alpha}{2}+\frac{\eta}{3}-\frac{\eta-\tau}{3}\Big)n^2
		=
		\Big(\frac{2}{9}+\frac{\alpha}{2}+\frac{\tau}{3}\Big)n^2\,.
	\]
	By the assumption of the case we have 
	\[
		\frac{\tau}{3} n^2
		>
		\frac{\tau^2}{2} n^2
		\ge
		\binom{|Z|}2
	\]
	and this shows that 
	\begin{equation*}\label{eq:1550}
		\sum_{z\in Z}d_R(z)
		\ge
		\Big(\frac{2}{9}+\frac{\alpha}{2}\Big)n^2+\binom{|Z|}2\,,
	\end{equation*}
	which in turn implies 
	\[
		\big|\{xy\in E\colon x, y\in U\tand \{x,y\}\cap Z\neq\emptyset\}\big|
		\ge
		\sum_{z\in Z}d_R(z)-\binom{|Z|}{2}
		\ge
		\Big(\frac{2}{9}+\frac{\alpha}{2}\Big)n^2\,.
	\]
	Finally, the sieve formula yields
	\begin{multline*}
		\big|\{xy\in E\cap E'\colon x, y\in U\tand \{x,y\}\cap Z\neq\emptyset\}\big|
		\ge
		\Big(\frac49+\alpha\Big)\frac{n^2}{2}+\Big(\frac59+\alpha\Big)\frac{n^2}{2}-\binom{n}{2}
		\ge
		\alpha n^2\,,
	\end{multline*}
	which is more than what we need for establishing~\eqref{eq:xyxy}. 
\end{proof}

\subsection{Connectable pairs and bridges in 3-uniform hypergraphs} 
\label{subsec:2341}

In this subsection we discuss the $3$-uniform connecting lemma from~\cite{old}
together with some related results. Roughly speaking, this lemma asserts that 
in any sufficiently large $3$-uniform hypergraph~$H=(V,E)$ 
with $\delta_1(H)\ge (5/9+\alpha)|V|^2/2$ any two pairs of vertices possessing 
a special property called connectability can be connected by  many short 
paths. 
The definition of our connectability notion presupposes that for every 
vertex $v\in V(H)$, one has fixed a robust subgraph of its link graph 
as obtained by Proposition~\ref{prop:robust}. We collect these assumptions 
in the following setup.

\begin{setup}\label{setup:1746}
	Suppose that $\alpha\in (0, 1/3)$, that $\mu$, $\beta>0$, that $\ell\ge 3$ is an odd integer, 
	that $H=(V, E)$ is 
	a sufficiently large $3$-uniform hypergraph 
	with $\delta_1(H)\ge (5/9+\alpha)|V|^2/2$, and that for every vertex $v\in V$,
	Proposition~\ref{prop:robust} located a 
	$(\beta, \ell)$-robust induced subgraph $R_v\subseteq H_v$ of its 
	link graph satisfying
		\begin{enumerate}[label=\rmlabel]
			\item\label{it:1652a} $|V(R_v)|\ge \bigl(\frac23+\frac\alpha 2\bigr)|V|$,
			\item\label{it:1652b} $e_{H_v}\big(V(R_v),V\setminus V(R_v)\big)\le 
				\mu |V|^2$, 			
			\item\label{it:1652c} and $e(R_v)\ge 
				\left(\frac{5}{9}+\frac{\alpha}{2}\right)\frac{|V|^2}{2}-\frac{(|V|-|V(R_v)|)^2}{2}
				\ge\left(\frac49+\frac23\alpha\right)\frac{|V|^2}{2}$.
		\end{enumerate}
\end{setup}

We remark that for most part of this section condition~\ref{it:1652b} with $\mu=\alpha/4$ of this setup suffices.
In fact, the results in~\cite{old} were obtained for this restricted version of the setup 
and below we (mostly) recapitulate and apply them in this form. A stronger form, with 
a smaller value of $\mu$, of 
Proposition~\ref{prop:robust}\,\ref{it:rc2} will be useful in Lemma~\ref{L35} below and for 
the construction of the absorbing path in Section~\ref{sec:Abpa}.
The following notion of connectable pairs is taken
from~\cite{old}*{Definition 2.5}.

\begin{dfn}
	Given Setup~\ref{setup:1746} and $\zeta>0$, an unordered pair $xy$ of distinct
	vertices of~$H$ is said to be {\it $\zeta$-connectable} if the set
	\[
		U_{xy}=\{v\in V\colon xy\in E(R_v)\}
	\]
	satisfies $|U_{xy}|\ge \zeta |V|$. An ordered pair $(x, y)$ 
	is {\it $\zeta$-connectable} if its underlying unordered pair $xy$ is. 
\end{dfn}

We are now ready to state the $3$-uniform connecting lemma from~\cite{old}*{Proposition 2.6}.
\begin{prop}[Connecting lemma for $3$-uniform hypergraphs] \label{lem:con3} 
	Given Setup~\ref{setup:1746} (with $\mu=\alpha/4$) and $\zeta>0$, there 
	exists $\theta=\theta(\alpha, \beta, \ell, \zeta)>0$
	such that the following holds.
	
	If $(a, b)$, $(x, y)$ are two disjoint $\zeta$-connectable 
	pairs of vertices of $H$, then the number of $ab$-$xy$-paths in $H$ with 
	$3\ell+1$ inner vertices is at least $\theta|V|^{3\ell+1}$. \qed 
\end{prop}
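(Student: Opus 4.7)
\medskip
\noindent\textbf{Proof plan.}
The plan is to show that the connecting path can be assembled from three short ``robust'' segments inside link graphs, glued by suitable anchor vertices chosen from $U_{ab}$ and $U_{xy}$, and then to lower-bound the number of such assemblies by a straightforward counting.

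First, I would invoke $\zeta$-connectability to select anchor vertices: the sets $U_{ab}$ and $U_{xy}$ each have size at least $\zeta|V|$, so there are at least $\zeta^2|V|^2$ ordered choices of $(p,q)$ with $p\in U_{ab}$, $q\in U_{xy}$, $ab\in E(R_p)$ and $xy\in E(R_q)$. For each such choice one obtains for free the hyperedges $abp$ and $xyq$. The target path then takes the shape
\[
  a,\, b,\, p,\, z_1,\, z_2,\, \ldots,\, z_{3\ell-1},\, q,\, x,\, y,
\]
with $3\ell+1$ inner vertices.

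Second, I would convert the ``robust path'' information in $R_p$, $R_q$, and a well-chosen intermediate link graph $R_v$ into the needed tight-path edges. The key conversion is: an edge $uw\in E(R_v)$ records a hyperedge $uvw\in E(H)$, so a graph path $u_0,u_1,\ldots,u_\ell$ in $R_v$ yields the hyperedges $\{vu_iu_{i+1}\}_{i=0}^{\ell-1}$. By itself such a ``$v$-fan'' is not a tight hyperpath, but after interleaving two different anchor-like vertices it can be stitched into a non-repeating tight segment. Concretely, one would devote the first $\ell$ inner steps (past $p$) to a path of length $\ell$ in $R_p$, the last $\ell$ inner steps (before $q$) to a path of length $\ell$ in $R_q$, and the middle block to a path of length~$\ell$ in $R_v$ for an intermediate vertex $v$; adjacent blocks are then glued by a short constant-length transition using the existence of many common neighbours, guaranteed by Lemma~\ref{lem:L36} applied to the two relevant link graphs together with $\delta_2(H)$-information. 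The oddness of~$\ell$ is used to make the parities on each side match up so that the final edges $q xy$ and $abp$ meet the $R_q$-path and $R_p$-path in the correct orientation.

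Third, I would carry out the count. Robustness of $R_p$ gives at least $\beta|V|^{\ell-1}$ choices for the $R_p$-path between its prescribed endpoints, and analogously for $R_v$ and $R_q$; multiplied together this contributes a factor of $\beta^3|V|^{3\ell-3}$. Together with the $\geq\zeta^2|V|^2$ anchor choices and the $\geq c|V|^2$ choices for the constant-length gluing gadgets (for some $c=c(\alpha)>0$ from Lemma~\ref{lem:L36}), one obtains at least $\theta|V|^{3\ell+1}$ valid path templates for $\theta=\theta(\alpha,\beta,\ell,\zeta)>0$, after subtracting lower-order terms for those degenerate templates where two of the $3\ell+1$ inner vertices coincide or coincide with $\{a,b,x,y\}$.

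The main obstacle, and the step that requires the most care, is the third (middle) stage: the hyperedges produced by a single robust link graph $R_v$ all share the vertex $v$ and therefore do not, on their own, form a tight path. One must thread two blocks coming from \emph{different} vertices $p$ and $v$ (and later $v$ and $q$) together with edges of the form $\dots,p,w_i,w_{i+1},v,\dots$, and verify that the transitional pair $(w_i,w_{i+1})$ behaves like a connectable pair with respect to both $p$ and $v$. Showing that such good transitions exist in abundance is the true combinatorial content of the proposition, and this is where the inequalities from Proposition~\ref{prop:robust}\ref{it:rc3} and Lemma~\ref{lem:L36} enter decisively.
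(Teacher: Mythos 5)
The paper does not actually prove Proposition~\ref{lem:con3}: the $\qed$ right after the statement marks it as imported verbatim from \cite{old}*{Proposition 2.6}, where the proof appears. So there is no ``paper's own proof'' here, but the proposal can still be checked against the structure that the $4$-uniform Proposition~\ref{lem:con} in this paper inherits from it.

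There is a genuine gap, and it is exactly at the step you flag as ``the main obstacle'' without resolving it. You propose to devote the first $\ell$ inner steps past $p$ to a path of length $\ell$ in $R_p$. But every edge of $R_p$ is a pair $uw$ with $puw\in E(H)$; in a tight $3$-uniform path the vertex $p$ lies in at most three consecutive hyperedges ($bpz_1$, $pz_1z_2$ and, if $p$ were shifted, one on the other side), so only the two $R_p$-edges $bz_1$ and $z_1z_2$ can ever be converted into hyperedges of the desired path. The hyperedges $z_1z_2z_3, z_2z_3z_4, \ldots$ no longer contain $p$, and a length-$\ell$ path in $R_p$ gives no information about them. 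Consequently the three long blocks anchored at $p$, $v$, $q$ plus ``constant-length gluing'' cannot be stitched into a tight hyperpath. The argument actually used (whose $4$-uniform shadow is the walk $abc\,u_1p_1p_2p_3u_2\ldots$ in the proof of Proposition~\ref{lem:con}) is a much finer interleaving: one chooses on the order of $\ell$ distinct anchor vertices $u_1,\ldots,u_{\ell+1}$ and inserts them one every two positions, so that each $u_j$ sits in exactly three consecutive hyperedges and therefore only needs the corresponding three $2$-edges of the $p$-sequence to lie in $R_{u_j}$. Robustness and a convexity (Jensen-type) count are then used to produce one $p$-sequence compatible with many anchors simultaneously, from which the $\ell+1$ anchors are drawn. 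Also note that Lemma~\ref{lem:L36} is new to this paper and is not an ingredient of the $3$-uniform connecting lemma; it serves Lemma~\ref{L35} and the absorber construction, so invoking it here is not part of the intended route.
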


For later use we also state the following simple fact (see~\cite{old}*{Fact 4.1}).

\begin{lemma}\label{F41}
	Given Setup~\ref{setup:1746} (with $\mu=\alpha/4$) and $\zeta>0$, there are at most $\zeta |V|^3$ 
	triples $(x,y,z)\in V^3$ such that $xy\in E(R_z)$, but $xy$ 
	is not $\zeta$-connectable in $H$. \qed
\end{lemma}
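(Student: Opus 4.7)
The plan is a short double-counting argument that exchanges the roles of the pair $xy$ and the third coordinate $z$. By the definition of $U_{xy}$, a vertex $z$ satisfies $xy\in E(R_z)$ if and only if $z\in U_{xy}$, so the quantity to be bounded equals
\[
	\sum_{\substack{(x,y)\in V^2,\; x\ne y\\ xy\text{ not $\zeta$-connectable}}}|U_{xy}|.
\]
Here the indexing set is really ordered pairs, but $U_{xy}$ depends only on the unordered pair $\{x,y\}$, so each unordered non-connectable pair contributes twice.

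Now I would simply invoke the defining inequality of non-connectability: for every non-$\zeta$-connectable pair $xy$ we have $|U_{xy}|<\zeta|V|$. Bounding the number of ordered pairs trivially by $|V|^2$ then gives a total of at most $|V|^2\cdot\zeta|V|=\zeta|V|^3$, which is the desired conclusion. There is no genuine obstacle, because the entire content of the lemma is the quantitative nature of the definition of $\zeta$-connectability; Setup~\ref{setup:1746} and the hypergraph $H$ play no role beyond ensuring that the objects $U_{xy}$ and $R_z$ are defined.
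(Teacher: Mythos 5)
Your argument is correct and is precisely the double-counting one expects from the definitions: the paper records this lemma without proof (citing~\cite{old}*{Fact~4.1}), since, as you note, it is an immediate consequence of the definition of $U_{xy}$ and the quantitative threshold in the definition of $\zeta$-connectability. Nothing is missing; the observation that $xy\in E(R_z)$ forces $x\ne y$ (so the degenerate triples are excluded automatically) could be made explicit but is not a real gap.
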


To extend these notions and results to $4$-uniform hypergraphs we need a new 
$3$-uniform concept.

\begin{dfn}\label{def:bridge3}
	Given Setup~\ref{setup:1746} and $\zeta>0$, a triple $(x,y,z)\in V^3$ is called 
	a {\it $\zeta$-bridge in~$H$} if $xyz\in E$ and $xy$ and $yz$ are 
	both $\zeta$-connectable in $H$. We say a path $x_1x_2\dots x_{j-1}x_j$
	starts (resp.\ ends) with a $\zeta$-bridge, if $x_1x_2x_3$ (resp.\ $x_{j-2}x_{j-1}x_j$)
	is a $\zeta$-bridge.
\end{dfn}

It will be useful to estimate the number of bridges in a dense 3-uniform hypergraph.

\begin{lemma}\label{NB3} 
	Given Setup~\ref{setup:1746} (with $\mu=\alpha/4$) and $\zeta>0$, the number of 
	triples $(x,y,z)\in V^3$ with $xyz\in E$ 
	that fail to be a $\zeta$-bridge in $H$ is at most 
	$(2/9+\alpha/2+2\zeta)|V|^3$.
	In particular, if $\zeta<\alpha/4$, then
	there are more than $|V|^3/3$ $\zeta$-bridges in $H$.
\end{lemma}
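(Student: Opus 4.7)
The plan is to first establish the asserted upper bound on the number~$N$ of ordered triples $(x,y,z)\in V^3$ with $xyz\in E$ that fail to be $\zeta$-bridges, and then deduce the ``in particular'' statement from it via a short double-count.

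Let $B_1$ (resp.~$B_2$) denote the set of triples $(x,y,z)$ with $xyz\in E$ such that $xy$ (resp.~$yz$) is not $\zeta$-connectable. The involution $(x,y,z)\mapsto(z,y,x)$ swaps $B_1$ and $B_2$, so $|B_1|=|B_2|$ and hence $N\le|B_1|+|B_2|=2|B_1|$. To bound $|B_1|$, for each fixed $z\in V$ I would split the edges $xy\in E(H_z)$ with $xy$ not $\zeta$-connectable according to whether $xy\in E(R_z)$. Summed over $z$, the first class contributes at most $\zeta|V|^3$ ordered triples by Lemma~\ref{F41}. For the second class, any edge of $H_z$ outside $R_z$ is incident to $V\setminus V(R_z)$, so its count is at most $\mu|V|^2+\binom{|V\setminus V(R_z)|}{2}$ by Setup~\ref{setup:1746}\,\ref{it:1652b}.

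The key arithmetic step is to sum the term $\binom{|V\setminus V(R_z)|}{2}$ carefully over $z$. Writing $\tau_z=|V|-|V(R_z)|$, Setup~\ref{setup:1746}\,\ref{it:1652a} gives $\tau_z\le(1/3-\alpha/2)|V|$ for each $z$, which also implies $\sum_z\tau_z\le(1/3-\alpha/2)|V|^2$. Combining these two bounds yields
\[
\sum_z\tau_z^2\le\max_z\tau_z\cdot\sum_z\tau_z\le\bigl(\tfrac{1}{3}-\tfrac{\alpha}{2}\bigr)^2|V|^3.
\]
Putting everything together with $\mu=\alpha/4$, one obtains $|B_1|\le\zeta|V|^3+\tfrac{\alpha}{2}|V|^3+\bigl(\tfrac{1}{3}-\tfrac{\alpha}{2}\bigr)^2|V|^3$, and therefore
\[
N\le 2|B_1|\le\Bigl(\tfrac{2}{9}+\tfrac{\alpha}{3}+\tfrac{\alpha^2}{2}+2\zeta\Bigr)|V|^3\le\Bigl(\tfrac{2}{9}+\tfrac{\alpha}{2}+2\zeta\Bigr)|V|^3,
\]
where the last estimate uses $\alpha\le 1/3$ (from Setup~\ref{setup:1746}) to absorb $\alpha^2/2$ into $\alpha/6$.

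For the ``in particular'' part, the minimum degree hypothesis gives $6|E(H)|\ge(5/9+\alpha)|V|^3$, so the number of $\zeta$-bridges is at least $6|E(H)|-N\ge(1/3+\alpha/2-2\zeta)|V|^3$, which exceeds $|V|^3/3$ precisely when $\zeta<\alpha/4$. The delicate point, as illustrated above, is this arithmetic: a sloppy uniform bound on $\sum_z\tau_z^2$ produces the coefficient $\alpha$ in place of $\alpha/2$ and would then not yield the ``in particular'' conclusion.
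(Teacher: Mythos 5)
Your proof is correct and follows essentially the same route as the paper's. The paper decomposes the set of ordered orientations of edges into four exceptional sets $P_1, Q_1, P_2, Q_2$ (where $P_i$ is controlled by Setup~\ref{setup:1746}\,\ref{it:1652a}--\ref{it:1652b} and $Q_i$ by Lemma~\ref{F41}), while you first pass to $B_1$ via the involution $(x,y,z)\mapsto(z,y,x)$ and then split $B_1$ into exactly these two pieces; the content is identical, and your arithmetic, including the use of the exact bound $\tau_z\le(1/3-\alpha/2)|V|$ from~\ref{it:1652a} and the absorption of $\alpha^2/2$ via $\alpha\le 1/3$, matches the paper's computation.
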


\begin{proof}
	Starting with $A=\{(x,y,z)\in V^3\colon xyz \in E\}$ we note that the minimum degree 
	assumption yields $|A|\ge (5/9+\alpha)|V|^3$.
	We consider four exceptional subsets of $A$, namely 
	\begin{align*}
		P_1&=\{(x,y,z)\in A\colon xy\not\in E(R_z)\}\,,&
		Q_1&=\{(x,y,z)\in A\setminus P_1\colon \text{ 
		$xy$ is not $\zeta$-connectable}\}\,,\\
		P_2&=\{(x,y,z)\in A\colon yz\not\in E(R_x)\}\,,& 
		Q_2&=\{(x,y,z)\in A\setminus P_2\colon \text{
		$yz$ is not $\zeta$-connectable}\}\,.
	\end{align*}
	Notice that every triple in $A\setminus(P_1\cup Q_1\cup P_2\cup Q_2)$ is a $\zeta$-bridge 
	in $H$. 
	Lemma~\ref{F41} yields the upper bounds $|Q_1|$, $|Q_2|\le \zeta |V|^3$.
	Moreover, the first two clauses of Setup~\ref{setup:1746} 
	and~$\alpha\leq 1/3$ lead to
	\begin{align*}
		|P_1|
		&\le 
		\sum_{z\in V}\Big(2\,e_{H_z}(V(R_z), V\setminus V(R_z))+(n-|V(R_z)|)^2\Big) \\
		&\le 
		\bigg(\frac{\alpha}{2}+\Big(\frac{1}{3}-\frac{\alpha}{2}\Big)^2\bigg)|V|^3\\
		&\le 
		\Big(\frac{1}{9}+\frac{\alpha}{4}\Big)|V|^3\,.
	\end{align*}
	The same upper bound applies to $|P_2|$. These upper bounds on $|Q_1|$, $|Q_2|$
	and $|P_1|$, $|P_2|$ yield the desired upper bound on 
	$|P_1\cup Q_1\cup P_2\cup Q_2|$ for the first part of the lemma.
	
	The second part is a direct consequence, since $A\setminus (P_1\cup Q_1\cup P_2\cup Q_2)$
	is a subset of all $\zeta$-bridges in $H$ and 
	\[
		\big|A\setminus (P_1\cup Q_1\cup P_2\cup Q_2)\big|
		\geq 
		\Big(\frac{5}{9}+\alpha\Big)|V^3|-\Big(\frac{2}{9}+\frac{\alpha}{2}+2\zeta\Big)|V^3|
		>
		\frac{|V|^3}{3}
	\]
	as long as $\zeta<\alpha/4$.
\end{proof}

The next lemma implies that every two $3$-uniform hypergraphs $H$ and $H'$ on the same vertex set $V$
with minimum vertex degree $(5/9+o(1))|V|^2/2$ have the property that~$H'$ contains many 
bridges of~$H$ as edges. (For technical reasons it will be convenient to allow that the vertex sets of $H$ and $H'$ differ slightly.) Note that the lower bound on the number of bridges 
in Lemma~\ref{NB3} falls short of implying such an assertion. In fact, the proof of the following lemma will
rely on the structural properties of hypergraphs and bridges. 

\begin{lemma}\label{L35}
Given Setup~\ref{setup:1746} with $\mu=\frac{\alpha^3}{18}$ for a $3$-uniform hypergraph $H=(V,E)$ with $|V|=n$, 
let $\zeta\in(0,\alpha^2/9)$, and let $H'=(V',E')$ 
be a $3$-uniform hypergraph with $\delta_1(H')\geq (5/9+\alpha)n^2/2$ and 
$|V\triangle V'|\leq \alpha n/18$.
Then the number of $\zeta$-bridges $(x,y,z)\in V^3$ in $H$ such that $xyz\in E'$ 
is at least $\alpha n^3/2$.
\end{lemma}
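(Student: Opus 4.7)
The plan is to apply Lemma~\ref{lem:L36} to every prospective \emph{centre} $y \in V \cap V'$, with $G:=H_y$, $G':=H'_y$, and $R:=R_y$, each restricted to the common vertex set $W:=V \cap V'$ of size at least $n-\alpha n/18$. I will first check that the hypotheses of Lemma~\ref{lem:L36} hold: both restricted link graphs still carry at least $(5/9+\alpha/2)|W|^2/2$ edges (since $\delta_1(H),\delta_1(H')\geq(5/9+\alpha)n^2/2$ and the discrepancy caused by $|V\triangle V'|\leq\alpha n/18$ is absorbed into the gap); Setup~\ref{setup:1746}\,(i) yields $|V(R_y)\cap W|\geq 2|W|/3$; and the sharpened bound $\mu=\alpha^3/18$ in Setup~\ref{setup:1746}\,(ii) forces $e_{H_y}(V(R_y),V\setminus V(R_y))\leq\alpha^3n^2/18$, comfortably below $(\alpha/2)|W|^2/4$. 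Lemma~\ref{lem:L36} will then furnish, for every $y\in W$, at least $\tfrac34(\alpha/2)|W|^2=\Omega(\alpha n^2)$ ordered pairs $(u,v)\in(U_y\cap W)^2$ satisfying $uv\in E(H_y)\cap E(H'_y)$ (equivalently $uyv\in E\cap E'$) together with $d_{R_y}(v)>|W|/3$.

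Summing over $y\in W$ will produce a family $\cT$ of $\Omega(\alpha n^3)$ ordered triples $(u,y,v)$; each such triple has $u,v\in U_y$ and $uyv\in E$, so that $uv\in E(R_y)$. Fact~\ref{F41}, used with $uv$ as the pair and $y$ as the third vertex, will then imply that all but at most $\zeta n^3$ members of $\cT$ have $\{u,v\}$ $\zeta$-connectable; since $\zeta<\alpha^2/9$ this loss is of lower order than $\alpha n^3$. To convert a surviving triple into a $\zeta$-bridge of $H$ with underlying edge in $E'$ it then suffices that at least one of $\{y,u\}$, $\{y,v\}$ also be $\zeta$-connectable, for in that case the ordering of the edge $\{u,v,y\}$ with middle vertex $u$ or $v$, respectively, is a $\zeta$-bridge contained in $E\cap E'$.

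The hardest step will be to translate the L36-output condition $d_{R_y}(v)>|W|/3$ into $\zeta$-connectability of the pair $\{y,v\}$ (or $\{y,u\}$). That condition provides more than $n/3$ vertices $w\in U_y$ with $yvw\in E$, whereas $\zeta$-connectability of $\{y,v\}$ asks for $\zeta n$ vertices $w$ with $y,v\in U_w$ and $yvw\in E$. I plan to close the gap via a double-counting argument over tuples $(y,v,w)$ with $vw\in E(R_y)$ and $yv\in E(R_w)$, leveraging $\sum_w|U_w|\geq(2/3+\alpha/2)n^2$ from Setup~\ref{setup:1746}\,(i), the sharp bound $\mu=\alpha^3/18$ from Setup~\ref{setup:1746}\,(ii) to control boundary losses, and $\sum_w e(R_w)\geq(4/9+2\alpha/3)n^3/2$ from Setup~\ref{setup:1746}\,(iii); the outcome should be that only $o(\alpha n^3)$ triples of $\cT$ fail this extra connectability condition. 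Gathering all exceptional losses will leave at least $\alpha n^3/2$ ordered triples in $\cT$ that are $\zeta$-bridges of $H$ whose underlying edge lies in $E'$, as required.
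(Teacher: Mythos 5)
Your first half is essentially the paper's proof: apply Lemma~\ref{lem:L36} to $H_y[W]$, $H'_y[W]$, $R_y[W]$ for each $y\in W=V\cap V'$, collect $\Omega(\alpha n^3)$ ordered triples $(u,y,v)$ with $uv\in E(R_y)$, $uv\in E(H'_y)$, and $d_{R_y}(v)>|W|/3$, and invoke Lemma~\ref{F41} with apex~$y$ to discard the at most $\zeta n^3$ triples in which $\{u,v\}$ is not $\zeta$-connectable. Rewriting $(u,y,v)$ as $(v,u,y)$, your $\cT$ coincides with the paper's set $T$.

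The gap is in the treatment of the second pair. You commit to showing $\{y,v\}$ is $\zeta$-connectable, but the degree condition on $v$ gives no leverage over that pair: $d_{R_y}(v)$ counts the $w$ with $v,w\in V(R_y)$ and $vwy\in E$, whereas connectability of $\{y,v\}$ requires many $w$ with $y,v\in V(R_w)$ and $vwy\in E$. Both sets live in the common neighbourhood of $\{v,y\}$, but the second is quantified over the robust graphs of \emph{other} vertices $w$, about which $d_{R_y}(v)$ is silent. The double count you sketch over $(y,v,w)$ with $vw\in E(R_y)$ and $yv\in E(R_w)$ does not help: the inclusion--exclusion estimate obtainable from Setup~\ref{setup:1746}\,\ref{it:1652c} alone is $\bigl(\tfrac89+\tfrac43\alpha-1\bigr)n^3$, negative for $\alpha<\tfrac1{12}$, and Lemma~\ref{F41} needs an apex $w$ with $yv\in E(R_w)$ already in hand, which is precisely what you are trying to establish. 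The repair is to take the other ordering. Since $d_{R_y}(v)\le d_{H_y}(v)=d_H(\{v,y\})=d_{H_v}(y)$, the degree condition pairs naturally with $v$ as apex, so the target bridge should be $(v,u,y)$ with middle vertex $u$ and remaining pair $\{u,y\}$. The bad triples then split into the paper's three cases: if $u,y\in V(R_v)$, then $uy\in E(R_v)$ and Lemma~\ref{F41} applies with apex $v$; if $u\notin V(R_v)$ while $y\in V(R_v)$, the pair $uy$ is a boundary edge of $R_v$, controlled by the sharpened $\mu=\alpha^3/18$; and if $y\notin V(R_v)$, then $d_{H_v}(y)>(\tfrac13-\tfrac{\alpha}{54})n$ combined with $y\notin V(R_v)$ forces $y$ to be incident to $\Omega(\alpha n)$ boundary edges of $R_v$, so there are only $O(\alpha^2 n)$ such $y$ for each $v$. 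No double-counting argument is required once the correct ordering is chosen.
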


\begin{proof} Let $H=(V,E)$ and $H'=(V',E')$ satisfy the assumptions of the lemma. 
In particular, for every vertex $v\in V$ we fixed a robust subgraph $R_v\subseteq H_v$. 
We consider the following set of triples
\[
	T
	=
	\big\{(x,y,z)\in V^3\colon 
		xy\in E(H_z)\cap E(H'_z)\,,\
		x,y\in V(R_{z})\,,
		\tand
		d_{R_z}(x) > (\tfrac{1}{3}-\tfrac{\alpha}{54})n\big\}\,.
\]
We shall appeal to Lemma~\ref{lem:L36} for a lower bound on $|T|$. For that we have to restrict to the subhypergraphs and 
subgraphs induced on $W=V\cap V'$. We
consider 
\begin{multline*}
	T[W]
	=
	\big\{(x,y,z)\in W^3\colon 
		xy\in E(H_z[W])\cap E(H'_z[W])\,,\\
		x,y\in V(R_{z})\cap W\,, 
		\tand
		d_{R_z[W]}(x) > |W|/3\big\}\,.
\end{multline*}
Note that the bound on the symmetric difference $V\triangle V'$ 
guarantees a minimum vertex degree of at least $(5/9+8\alpha/9)n^2/2$ for $H[W]$ and $H'[W]$. Moreover,
for every $z\in W$
we have
\[
	|V(R_z)\cap W|
	\geq
	\Big(\frac{2}{3}+\frac{\alpha}{2}\Big)n-\frac{\alpha}{18}n
	\geq 
	\frac{2}{3}|W|
\]
and
\[
	e_{H_z}\big(V(R_z)\cap W, W\setminus V(R_z)\big)
	\leq 
	e_{H_z}\big(V(R_z), V\setminus V(R_z)\big)
	\leq 
	\mu n^2
	=
	\frac{\alpha^3}{18}n^2
	\leq 
	\frac{\alpha}{4} |W|^2\,.
\]
Consequently, for every $z\in W$ we can apply Lemma~\ref{lem:L36} to
 $G=H_{z}[W]$, $G'=H'_{z}[W]$, and $R=R_z[W]\subseteq H_z[W]$
and~\eqref{eq:uzzy} tells us that
\begin{equation*}
	\big|T[W]\big|
	\ge 
	\frac34\cdot\frac{8}{9}\alpha |W|^3
	\geq
	\frac34\cdot\frac{8}{9}\alpha \Big(1-\frac{\alpha}{18}\Big)^3n^3
	\geq
	\frac{5}{8}\alpha n^3\,.
\end{equation*}
The definitions of $T$ and $T[W]$ imply $T[W]\subseteq T$ and, hence, we arrive at
\begin{equation}\label{eq:35T} 
	|T|\geq \frac{5}{8}\alpha n^3\,.
\end{equation}
We shall bound the sizes of the following `bad' subsets of~$T$
\begin{align*}
S_1&=\{(x,y,z)\in T\colon \text{$xy$ is not a $\zeta$-connectable pair in~$H$}\}\\
\tand\quad 
S_2&=\{(x,y,z)\in T\colon \text{$yz$ is not a $\zeta$-connectable pair in~$H$}\}\,.
\end{align*}
By definition of $T$, every triple $(x,y,z)\in T$ corresponds to an edge in $E\cap E'$ and by 
definition of $S_1$ and $S_2$, every triple in $T\setminus(S_1\cup S_2)$ is a $\zeta$-bridge in~$H$. 
Hence in view of~\eqref{eq:35T}, the conclusion of Lemma~\ref{L35} will follow from the estimates
\begin{equation}\label{eq:35S12}
	|S_1|\leq \zeta n^3
	\qqand
	|S_2|\leq \Big(\zeta+\frac{\alpha^3}{18}+\frac{\alpha^2}{8}\Big)n^3
\end{equation}
combined with $\alpha<1/3$ (cf.\ Setup~\ref{setup:1746}) and $\zeta<\alpha^2/9$.

The desired upper bound on the size of $S_1$ is a direct consequence of Lemma~\ref{F41}. In fact 
by definition of $T$, for every $(x,y,z)\in T$ we have $xy\in E(H_z)$ and  $x$, $y\in V(R_z)$. Since 
$R_z\subseteq H_z$ is an induced subgraph, it follows that $xy\in E(R_z)$ and Lemma~\ref{F41} applies.

In order to prove the second inequality of~\eqref{eq:35S12} we note that 
$xy\in E(H_z)$ is equivalent to $yz\in E(H_x)$ and thus we can apply the same argument as above to the subset 
\[
	S'_2=\{(x,y,z)\in S_2\colon y\in V(R_x)\tand z\in V(R_x)\}
\]
and Lemma~\ref{F41} tells us $S'_2\leq \zeta|V|^3$. Next we bound the size of $S_2\setminus S'_2$ by splitting it into the sets
\[
	S''_2=\{(x,y,z)\in S_2\colon y\not\in V(R_x)\tand z\in V(R_x)\}
	\ \tand\
	S'''_2=\{(x,y,z)\in S_2\colon z\not\in V(R_x)\}\,.
\]
Summarising the discussion above, we note that the proof of~\eqref{eq:35S12} reduces to showing that
\begin{equation}\label{eq:35S2p}
	\big|S''_2\big|\leq \frac{\alpha^3}{18}n^3\qqand
	\big|S'''_2\big|\leq \frac{\alpha^2}{8}n^3\,.
\end{equation}
For the bound on $|S''_2|$ we appeal for every $x\in V$ 
to part~\ref{it:1652b} of Setup~\ref{setup:1746} for~$R_x\subseteq H_x$. For every vertex $x\in V$ there are at most 
$\mu n^2=\alpha^3n^2/18$ pairs $(y,z)\in (V\setminus V(R_x))\times V(R_x)$ with $yz\in E(H_x)$. Since the definition of~$T\supseteq S''_2$ ensures $xy\in E(H_z)$ and, hence, 
by symmetry also  $yz\in E(H_x)$, the desired bound on $|S''_2|$ stated in~\eqref{eq:35S2p} follows.

For the bound on $|S'''_2|$ we consider the set of pairs  
\[
	P=\{(x,z)\in V^2
		\colon d_{R_z}(x)>(\tfrac{1}{3}-\tfrac{\alpha}{54})n\tand z\not\in V(R_x)\}
\]
and we observe that the definitions of $T$ and $S'''_2$ yield
\[
	\big|S'''_2\big|\leq |P|\cdot n\,.
\]
For the bound on $P$ we consider an arbitrary vertex $x\in V$. Since 
\[
	d_{R_z}(x)\leq d_{H_z}(x)=d_H(x,z)=d_{H_x}(z)
\]
we are interested in the number of vertices $z\not\in V(R_x)$ with $d_{H_x}(z)>(\tfrac{1}{3}-\tfrac{\alpha}{54})n$.
Owing to part~\ref{it:1652a} of Setup~\ref{setup:1746} for~$R_x\subseteq H_x$ there are at least 
\[
	d_{H_x}(z)-\big|V\setminus V(R_x)\big|
	>
	\Big(\frac{1}{2}-\frac{1}{54}\Big)\alpha n
	=
	\frac{26}{54}\alpha n
\]
edges of $E_{H_x}\big(V(R_x),V\setminus V(R_x)\big)$ incident to such a vertex~$z$. Therefore, part~\ref{it:1652b} 
of Setup~\ref{setup:1746} implies that for every fixed~$x$ there are at most 
\[
	\frac{e_{H_x}\big(V(R_x),V\setminus V(R_x)\big)}{26\alpha n/54}
	\leq
	\frac{\alpha^3n^2/18}{26\alpha n/54}
	\leq 
	\frac{\alpha^2}{8}n
\]
choices of~$z$. Consequently, $|P|\leq \alpha^2 n^2/8$ and the bound on $|S'''_2|$ from~\eqref{eq:35S2p} follows.
This concludes the proof of Lemma~\ref{L35}.
\end{proof}

An interesting feature of Proposition~\ref{lem:con3} caused by the proof strategy
pursued in~\cite{old} is that the number of inner vertices in the connections
it provides is necessarily congruent to~$1$ modulo $3$. In \S\ref{subsec:1727}
below it will be convenient to employ connections whose numbers of inner vertices are 
in other residue classes modulo $3$. As the following result shows, such 
connections can be accomplished by going ``via bridges''.

\begin{cor}\label{all3}
	Given Setup~\ref{setup:1746} (with $\mu=\alpha/4$) and $\zeta>0$, there 
	exist three  integers $\ell_1, \ell_2, \ell_3\le 12\ell$
	with~$\ell_i\equiv i\pmod{3}$ for all $i\in [3]$ and $\theta=\theta(\alpha,\beta,\l,\zeta)>0$
	such that the following holds.
	
	If $(a, b)$, $(x, y)$ are two disjoint $\zeta$-connectable 
	pairs of vertices of $H$, then for every~\mbox{$i\in[3]$}, the number of $ab$-$xy$-paths in $H$ 
	with $\ell_i$ inner vertices is 
	at least $\theta|V|^{\ell_i}$.
\end{cor}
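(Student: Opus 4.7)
My plan is to take $\ell_1 = 3\ell+1$, $\ell_2 = 6\ell+5$, and $\ell_3 = 9\ell+9$; these satisfy $\ell_i \equiv i \pmod 3$, and the standing assumption $\ell \ge 3$ from Setup~\ref{setup:1746} delivers $\ell_3 = 9\ell+9 \le 12\ell$ (the other two bounds being easier). The case $i=1$ is Proposition~\ref{lem:con3} itself, so the task reduces to manufacturing $ab$-$xy$-paths with $\ell_2$ or $\ell_3$ inner vertices---that is, with inner vertex counts of residues $2$ and $0$ modulo $3$ respectively. Since Proposition~\ref{lem:con3} only produces paths of a single length modulo $3$, the idea is to splice together several Proposition~\ref{lem:con3}-paths along $\zeta$-bridges, exploiting the fact from Definition~\ref{def:bridge3} that both sub-pairs $(u,v)$ and $(v,w)$ of a $\zeta$-bridge $(u,v,w)$ are $\zeta$-connectable---precisely what is needed to invoke Proposition~\ref{lem:con3} at the two ends of each intermediate segment.

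For $i=2$, I would count $ab$-$xy$-paths of the form
\[
a,\,b,\,\underbrace{v_1,\dots,v_{3\ell+1}}_{\text{inner of }P_1},\,u,\,v,\,w,\,\underbrace{v'_1,\dots,v'_{3\ell+1}}_{\text{inner of }P_2},\,x,\,y,
\]
parametrised by a $\zeta$-bridge $(u,v,w)$ disjoint from $\{a,b,x,y\}$, together with a Proposition~\ref{lem:con3}-path $P_1$ from $(a,b)$ to $(u,v)$ and a Proposition~\ref{lem:con3}-path $P_2$ from $(v,w)$ to $(x,y)$, each having $3\ell+1$ inner vertices and chosen so that the entire sequence is a path. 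The inner vertex count of the resulting path is $(3\ell+1)+3+(3\ell+1)=6\ell+5=\ell_2$. By Lemma~\ref{NB3} (applicable under the mild hypothesis $\zeta<\alpha/4$, which one may impose without loss of generality since $\zeta$-connectability is monotone decreasing in $\zeta$), there are at least $|V|^3/3$ bridges in $H$, of which all but $O(|V|^2)$ avoid $\{a,b,x,y\}$; Proposition~\ref{lem:con3} then supplies at least $\theta_0|V|^{3\ell+1}$ candidates for each of $P_1$ and $P_2$, where $\theta_0=\theta_0(\alpha,\beta,\ell,\zeta)$ is the constant it provides. For $i=3$ the construction is analogous but uses two bridges and three Proposition~\ref{lem:con3}-paths, yielding $3(3\ell+1)+6=9\ell+9=\ell_3$ inner vertices.

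The main obstacle is verifying vertex-disjointness of all the pieces, so that each concatenation is an actual (tight) path and not merely a walk. This amounts to a routine inclusion-exclusion estimate: each forbidden vertex excludes only an $O(1/|V|)$ fraction of the Proposition~\ref{lem:con3}-paths that pass through it, so the total loss in each case is of strictly lower order than the main $\Omega(|V|^{\ell_i})$ term produced by the counting. Taking $\theta$ to be a sufficiently small positive constant depending only on $\theta_0$ (and hence only on $\alpha$, $\beta$, $\ell$, and $\zeta$) then delivers the required lower bound in each of the three cases.
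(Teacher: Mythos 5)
Your proposal coincides with the paper's proof: you pick the same exponents $\ell_1=3\ell+1$, $\ell_2=6\ell+5$, $\ell_3=9\ell+9$, splice Proposition~\ref{lem:con3}-paths along one or two $\zeta$-bridges whose sub-pairs are $\zeta$-connectable, count bridges via Lemma~\ref{NB3}, and discard the $O(|V|^{\ell_i-1})$ walks with repeated vertices. Your explicit remark that one may assume $\zeta<\alpha/4$ by monotonicity of connectability makes the appeal to the second part of Lemma~\ref{NB3} cleaner than the paper's, but the argument is otherwise identical.
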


\begin{proof}
	We set
	\[
		\l_1=3\l+1\,,\qquad
		\l_2=6\l+5\,,\qquad
		\l_3=9\l+9\,,\qqand
		\theta=\frac{\theta^3_1}{25}\,,
	\]
	where $\theta_1$ is provided by  Proposition~\ref{lem:con3}. Since $\l\geq 3$, we have $\l_1\leq\l_2\leq\l_3\leq 12\l$. 
	We already know that $\ell_1$ has the desired property by Proposition~\ref{lem:con3}
	and we shall verify the corollary for $\ell_2$ and $\ell_3$.
	
	Starting with the argument for $\ell_2$, we let any two disjoint $\zeta$-connectable
	pairs $(a, b)$ and $(x, y)$ be given. Notice that if $(u, v, w)$ is a $\zeta$-bridge,
	$abPuv$ is an $ab$-$uv$-path with $\ell_1$ inner vertices, and $vwQxy$ is a 
	$vw$-$xy$-path with $\ell_1$ inner vertices, then $abPuvwQxy$ is an $ab$-$xy$-walk
	with $\ell_1+3+\ell_1=\ell_2$ inner vertices. 

	By Lemma~\ref{NB3} for sufficiently large $|V|$
	there are $|V|^3/4$ possibilities to choose the bridge $(u, v, w)$ in such a way 
	that $\{a, b, x, y\}\cap\{u, v, w\}=\varnothing$ and for every such choice of 
	the bridge in the middle, Proposition~\ref{lem:con3} delivers $\theta_1|V|^{\ell_1}$
	possibilities for $P$ as well as~$\theta_1|V|^{\ell_1}$ possibilities for $Q$.
	So altogether the number of $ab$-$xy$-walks with $\ell_2$ inner vertices is at 
	least $\theta^2_1|V|^{\ell_2}/4$. Since at most~$O(|V|^{\ell_2-1})$ of them fail to 
	be paths due to containing the same vertex multiple times, this proves 
	that $\ell_2$ has the desired property for $\theta<\theta^2_1/5$ and sufficiently 
	large~$|V|$. 
	
	For $\l_3$ we can repeat the same argument once more and 
	get the same conclusion for the choice of $\theta=\theta^3_1/25$ above.
\end{proof}

\subsection{Path covers in 3-uniform hypergraphs}
\label{subsec:1727}

Preparing the proof of the $4$-uniform covering lemma in Section~\ref{sec:long_path}
we shall now prove the following $3$-uniform covering principle. 

\begin{prop}\label{prop:1742}
	For all $\alpha$, $\xi\in (0, 1/3)$ there is an infinite arithmetic 
	progression ${P\subseteq 3\NN}$ such that the following holds.
	
	Given Setup~\ref{setup:1746} (with $\mu=\alpha/4$), a collection $\ccB\subseteq V^3$ 
	of $\xi$-bridges in $H$ with $|\ccB|\ge \xi |V|^3$, and $M\in P$, we can cover all 
	but at most $\xi |V|+M$ 
	vertices of $H$ by vertex-disjoint paths of length $M$ each of which starts and ends
	with a bridge from $\ccB$.  
\end{prop}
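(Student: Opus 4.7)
The plan is to define the arithmetic progression $P$ by analysing which lengths arise when $\ccB$-bridges and connectors are concatenated, and then to cover $V$ greedily by vertex-disjoint tight paths of length $M$.

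To define $P$: by Corollary~\ref{all3} (applied with $\zeta=\xi$) any two disjoint $\xi$-connectable pairs can be joined by tight paths with $\ell_1=3\ell+1$, $\ell_2=6\ell+5$, or $\ell_3=9\ell+9$ inner vertices, and these three values cover all residue classes modulo~$3$. A tight path comprising a starting $\ccB$-bridge, then $s-2$ alternations of (connector, intermediate $\xi$-bridge in $H$), and finally one more connector followed by an ending $\ccB$-bridge has length $3s+\sum_{j=1}^{s-1}k_j-2$, where $s$ is the total number of bridges and $k_j\in\{\ell_1,\ell_2,\ell_3\}$ is the number of inner vertices of the $j$-th connector. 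Taking $k_1=\ell_2$ and $k_j=\ell_3$ for $j\ge 2$ yields $M_s=6\ell+9+(s-2)(9\ell+12)$, so
\[
P=\{M_s\colon s\ge s_0\}
\]
is an infinite arithmetic progression in $3\NN$, and I pick $s_0=s_0(\alpha,\xi)$ large enough that $M_{s_0}\ge 20/\xi$.

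Given $M=M_s\in P$, the plan is to iteratively extract vertex-disjoint paths of length $M$. I maintain an uncovered set $U\subseteq V$ (initialised to $V$), and while $|U|>\xi|V|+M$ I find a tight path $Q\subseteq H[U]$ of length $M$ of the above template and replace $U$ by $U\setminus V(Q)$. When the loop terminates $|U|\le\xi|V|+M$, as required. To guarantee a supply of $\ccB$-bridges throughout, at the outset I would greedily extract a matching $\ccB^\ast\subseteq\ccB$ of pairwise vertex-disjoint $\ccB$-bridges. Since each vertex lies in at most $3|V|^2$ members of $\ccB$, and $|\ccB|\ge\xi|V|^3$, such a greedy extraction yields $|\ccB^\ast|\ge\xi|V|/9$, which by the choice of $s_0$ comfortably exceeds $2\lceil|V|/M\rceil$, the total number of $\ccB$-bridges needed at the endpoints of all constructed paths. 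At each iteration the $s-2$ intermediate $\xi$-bridges are supplied by Lemma~\ref{NB3} (which gives more than $|V|^3/3$ of them, vastly more than can be spoiled by the bounded number of vertices already committed to the current path), and the $s-1$ connectors by Corollary~\ref{all3}.

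The main obstacle is the control of connectors as $|V\setminus U|$ grows close to $|V|$. Corollary~\ref{all3} provides $\theta|V|^{\ell_i}$ candidate connectors, and the naive union bound discards at most $\ell_i(|V|-|U|)|V|^{\ell_i-1}$ of them for containing a vertex outside $U$, so a valid connector inside $U$ exists as long as $|V|-|U|<\theta|V|/\ell_i$; for small $\xi$ this threshold is exceeded long before the procedure terminates. Circumventing this requires either reserving connector candidates in parallel with the bridge reservoir, or partitioning the procedure into phases within each of which the connectors are drawn from a freshly sampled random subset of $V$ where Corollary~\ref{all3} can be reapplied. Either scheme requires some bookkeeping but is standard in the absorption-method literature.
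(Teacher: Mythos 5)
Your proposed construction builds each $M$-edge path almost entirely out of connectors: with $s$ bridges and $s-1$ connectors, the bridges contribute only $3s$ vertices while the connectors contribute about $(s-2)(9\ell+9)$ vertices, so the connectors account for a fraction $\ge 1-\tfrac{1}{3\ell+4}$ of the path. This ratio is fatal to both of the repairs you sketch. A reservoir of size $o(|V|)$ cannot serve all the connectors, because covering $(1-\xi)|V|$ vertices with paths that are $\ge 90\%$ connector needs $\Omega(|V|)$ connector vertices. And drawing connectors greedily from the uncovered set $U$, which you correctly identify as breaking down once $|V\setminus U|\gtrsim\theta|V|/\ell$, cannot be rescued by ``resampling'' either: Corollary~\ref{all3} and Proposition~\ref{lem:con3} count connectors in $V$ relative to Setup~\ref{setup:1746} for $H$, and once a constant fraction of $V$ has been consumed there is no reason $H[U]$ (or $H$ restricted to a random subset of $U$) still satisfies the minimum-degree hypothesis $\delta_1\ge(5/9+\alpha)|U|^2/2$ that the whole machinery rests on. Your closing claim that bridging this gap ``is standard in the absorption-method literature'' is exactly backwards: making the covering survive the depletion of $V$ is the hard part, not routine bookkeeping.

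The paper circumvents this by not building the $M$-vertex paths from bridges and connectors at all. The bulk of the covering is delegated to Lemma~\ref{lem:2140} (which encapsulates the genuinely hard $3$-uniform covering argument of~\cite{old}): it produces, in one pass, disjoint $M$-vertex subpaths covering almost all of $V\setminus(\cR\cup X)$ with $\zeta_{\star\star}$-connectable end-pairs, where $M$ is a fixed constant chosen beforehand. Those subpaths are the payload; the only connectors are the $O(\ell)$-vertex splices between consecutive subpaths, which come from the small reservoir $\cR$ provided by Lemma~\ref{lem:1743}, and the bridges from $\ccB$ are attached only at the two ends of each concatenated long path. The payload-to-overhead ratio is then $M:O(\ell)\gg 1$, so the reservoir can supply every connector. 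This is a genuinely different decomposition from yours, and the step your argument misses — a covering lemma that produces all the short paths simultaneously rather than one at a time — is precisely what your proposal lacks.
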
 

Let us remark that while the vertex set $V$ in this statement is assumed to be much larger
than $\alpha^{-1}$ and $\xi^{-1}$, the quantification in Proposition~\ref{prop:1742} allows
to consider $M$ to be a function of $|V|$. In the application
we have in mind,~$M$ will be about $\Theta(\sqrt{|V|})$. 
Before we come to the proof of Proposition~\ref{prop:1742} itself, we would like to give 
a brief overview. First of all, in~\cite{old} we proved (somewhat implicitly) a similar 
result, where $M$ is a constant and~$|V|$ is very large. Moreover, there everything related 
to $\ccB$ is omitted, but instead of this one can 
demand that the end-pairs of the constructed paths should be $\zeta_{\star\star}$-connectable
for a sufficiently small constant $\zeta_{\star\star}\ll \alpha, \xi$ (see Lemma~\ref{lem:2140}
below). The idea here for obtaining longer paths (say of length $\sqrt{|V|}$) is that in the 
beginning of the proof we put a small reservoir 
set aside, so that in the end we can connect many short paths into a smaller number of longer 
ones. To this end we require a somewhat standard reservoir lemma (see Lemma~\ref{lem:1743}
and Figure~\ref{fig:res}). The length of the longer paths we obtain in this manner 
depends linearly on the number
of short paths we connect, and hence the possible such lengths form an arithmetic progression 
$P$. Now we still need to ensure that the paths we construct start and end with bridges 
from $\ccB$. This is achieved by putting sufficiently many such bridges aside that are 
vertex-disjoint among themselves and to the reservoir. At the end of the proof we will then
be able to connect the selected bridges to our paths by making further uses of the reservoir.

\begin{figure}[t]
	\begin{tikzpicture}[scale=0.9]
	
	\coordinate (a11) at (0,0);
	\coordinate (a12) at (0,0);
	\coordinate (a13) at (0,0);
	
	\foreach \i in {0, ..., 5}{
		\foreach \j in {0,1,2}{
			\coordinate (a\i\j) at (\j,\i);
			\fill (a\i\j) circle (2pt);
		}}
	
	\foreach \i in {0, ..., 5}{
		\qedge{(a\i0)}{(a\i1)}{(a\i2)}{7pt}{1.5pt}{red!70!black}{red!50!white,opacity=0};
		}
	
	\draw [ultra thick] (-1.5,-1.2) rectangle (13.5,8.5);
	\draw [ultra thick] (-.5,-.7) rectangle (2.5,5.7);
	\draw [ultra thick] (-.5,6.2) rectangle (12.5,8);
	\draw [ultra thick] (3,4.2) rectangle (12.5,5.2);
	\draw [ultra thick] (3,2.5) rectangle (12.5,3.5);
	\draw [ultra thick] (3,-.7) rectangle (10,.3);
	
	\node [rotate=90] at (8,1.5){{\Huge $\dots$}};
	\node at (-1,4){{\Huge $X$}};
	\node at (.5,7.1){{\Huge $\mathcal{R}$}};
	\node at (11,-.3){{\Large $\mathscr{C}_{\lambda+1}$}};
	\node at (13,2.9){{\Large $\mathscr{C}_{2}$}};
	\node at (13,4.6){{\Large $\mathscr{C}_{1}$}};
	\node at (14.1,4.2){{\Huge $V$}};
		
	\coordinate (a) at (3.2,-.2);
	
	\foreach \i in {0,1,2,3}{
		\coordinate (a0\i) at ($(a)+(\i*2.5,0)$);
		\coordinate (a1\i) at ($(a)+(\i*2.5,3.2)$);
		\coordinate (a2\i) at ($(a)+(\i*2.5,4.9)$);
		\draw [line width=3pt, color=red!70!black] (a1\i) -- ($(a1\i)+(1.6,0)$);
		\draw [line width=3pt, color=red!70!black] (a2\i) -- ($(a2\i)+(1.6,0)$);
		}
	
	\foreach \i in {0,1,2}{
		\draw [line width=3pt, color=red!70!black] (a0\i) -- ($(a0\i)+(1.6,0)$);
	}
	
	\draw [line width= 3pt, color = red!70!black] ($(a20)+(1.55,0)$) to [out = 90, in = 180]
	 +(.5,2.7)to [out = 0, in = 90] ($(a21)+(.05,0)$);
	
	\draw [line width= 3pt, color = red!70!black] ($(a21)+(1.55,0)$) to [out = 90, in = 180]
	+(.5,2.7)to [out = 0, in = 90] ($(a22)+(.055,0)$);
	
	\draw [line width= 3pt, color = red!70!black] ($(a22)+(1.55,0)$) to [out = 90, in = 180]
	+(.5,2.7)to [out = 0, in = 90] ($(a23)+(.055,0)$);

	\end{tikzpicture}
	\caption {Collections of small paths, reservoir $\mathcal{R}$, and 
		some bridges from~$\mathscr{B}$ form the set $X$.}
	\label{fig:res}
\end{figure}
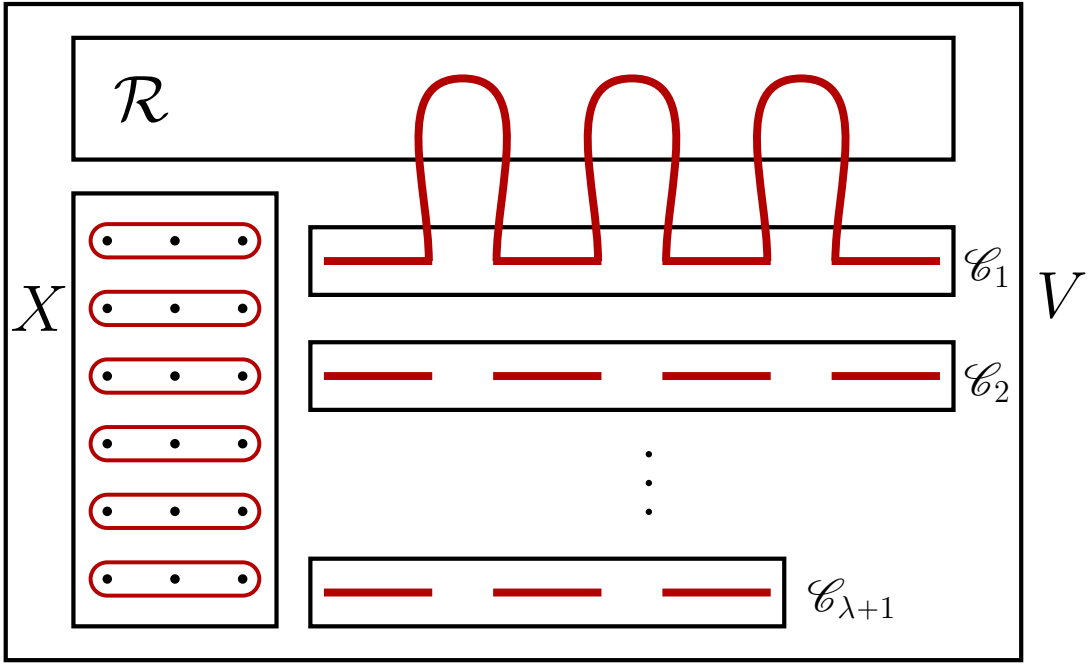

\begin{lemma}\label{lem:1743}
	For all $\alpha\in (0, 1/3)$ and $\theta_\star$, $\zeta_{\star\star}>0$ there 
	exists $\theta_{\star\star}>0$ with the following property.
	
	Given Setup~\ref{setup:1746} (with $\mu=\alpha/4$) and the 
	integers $\ell_1, \ell_2, \ell_3\le 12\ell$ 
	provided by Corollary~\ref{all3}, there is a reservoir set $\cR\subseteq V$ with 
	$\frac 12 \theta_\star^2 |V| \le |\cR|\le \theta_\star^2 |V|$ such that for 
	every $\cR'\subseteq \cR$ with $|\cR'|\le \theta_{\star\star}|\cR|$, 
	every $i\in [3]$, and any two 
	disjoint $\zeta_{\star\star}$-connectable pairs $(x, y)$ and $(z, w)$, there is 
	a $xy$-$zw$-path with $\ell_i$ inner vertices all of which belong to $\cR\setminus\cR'$.
\end{lemma}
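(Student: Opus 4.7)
The plan is to construct $\cR$ by random sampling: include each vertex of $V$ in $\cR$ independently with probability $p=\tfrac{3}{4}\theta_\star^2$. A Chernoff bound guarantees $\tfrac12\theta_\star^2|V|\leq|\cR|\leq\theta_\star^2|V|$ with probability $1-e^{-\Omega(|V|)}$. The key random variables to control are, for any two disjoint $\zeta_{\star\star}$-connectable pairs $(x,y)$, $(z,w)$ and any $i\in[3]$, the quantity
\[
	X_{xy,zw,i}
	=
	\big|\{xy\text{-}zw\text{-paths with $\ell_i$ inner vertices, all lying in $\cR$}\}\big|\,.
\]
By Corollary~\ref{all3} there is a constant $\theta=\theta(\alpha,\beta,\ell,\zeta_{\star\star})>0$ such that the total number of $xy$-$zw$-paths with $\ell_i$ inner vertices is at least $\theta|V|^{\ell_i}$, whence $\EE[X_{xy,zw,i}]\geq\theta p^{\ell_i}|V|^{\ell_i}$.

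Next, I would establish sharp concentration for each $X_{xy,zw,i}$. Toggling the membership of a single vertex in $\cR$ alters this count by at most the number of $xy$-$zw$-paths with $\ell_i$ inner vertices traversing that vertex, which is crudely $\leq\ell_i|V|^{\ell_i-1}$. McDiarmid's bounded-differences inequality therefore yields
\[
	\PP\bigl[X_{xy,zw,i}\leq\tfrac{1}{2}\EE[X_{xy,zw,i}]\bigr]
	\leq
	\exp\!\bigl(-\Omega_{\theta_\star,\theta,\ell}(|V|)\bigr)\,.
\]
Since there are at most $3|V|^4$ admissible tuples $(x,y,z,w,i)$, a union bound shows that, with positive probability, $\cR$ has cardinality in the desired window and simultaneously $X_{xy,zw,i}\geq\tfrac{\theta}{2}p^{\ell_i}|V|^{\ell_i}$ for \emph{every} admissible tuple. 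Fix such an $\cR$.

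Now, given any $\cR'\subseteq\cR$ with $|\cR'|\leq\theta_{\star\star}|\cR|\leq\theta_{\star\star}\theta_\star^2|V|$, a path contributing to $X_{xy,zw,i}$ meets $\cR'$ only if one of its $\ell_i$ inner vertices lies in $\cR'$, and thus the number of such ``bad'' paths is at most $\ell_i|\cR'||V|^{\ell_i-1}\leq\ell_i\theta_{\star\star}\theta_\star^2|V|^{\ell_i}$. Setting
\[
	\theta_{\star\star}
	=
	\min_{i\in[3]}\frac{\theta\,\theta_\star^{2\ell_i-2}}{4\ell_i}\,,
\]
which is a positive constant since $\ell_1,\ell_2,\ell_3\leq 12\ell$ and $\theta,\theta_\star,\ell$ are fixed, forces the number of bad paths to be strictly smaller than $\tfrac{1}{2}\EE[X_{xy,zw,i}]\leq X_{xy,zw,i}$, which produces at least one $xy$-$zw$-path with all $\ell_i$ inner vertices in $\cR\setminus\cR'$. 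The main technical step is the concentration estimate, but the Lipschitz constants for $X_{xy,zw,i}$ are small enough that $\sum_v c_v^2=O(|V|^{2\ell_i-1})$ while $\EE[X_{xy,zw,i}]^2=\Omega(|V|^{2\ell_i})$, comfortably giving tails exponential in $|V|$ and easily absorbing the polynomial union bound.
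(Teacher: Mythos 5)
Your argument follows the paper's proof essentially verbatim: choose $\cR$ binomially with $p=\tfrac34\theta_\star^2$, use Corollary~\ref{all3} for the expected path count, concentrate via a bounded-differences (Azuma/McDiarmid) inequality, union-bound over the $O(|V|^4)$ choices of endpoints and $i$, and then remove paths touching $\cR'$ by a simple counting argument; the paper defers the concentration step to the analogous argument in~\cite{old} while you spell it out, which is fine. The one small slip is arithmetic: your explicit $\theta_{\star\star}=\min_i\theta\,\theta_\star^{2\ell_i-2}/(4\ell_i)$ drops the factor $(3/4)^{\ell_i}$ coming from $p^{\ell_i}$, so it does not actually satisfy $\ell_i\theta_{\star\star}\theta_\star^2<\tfrac{\theta}{2}p^{\ell_i}$ (e.g.\ $(3/4)^{\ell_i}<1/2$ already for $\ell_i\ge 3$); replacing it by, say, $\theta_{\star\star}=\min_i\theta\,(3/4)^{\ell_i}\theta_\star^{2\ell_i-2}/(4\ell_i)$ repairs the inequality, and the rest of the argument is sound.
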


\begin{proof}
	Without loss of generality we may assume $\zeta_{\star\star}<1/4$. We 
	fix an auxiliary constant $\eta$ and choose $\theta_{\star\star}$ appropriately
	to obey the hierarchy  
	$\theta_{\star\star}\ll \eta \ll \theta_\star, \zeta_{\star\star}, \alpha$. 
	Consider a random subset $\cR\subseteq V$ including every vertex 
	$v\in V$ independently with probability $\frac 34\theta_\star^2$. It follows from 
	Corollary~\ref{all3}
	along the lines of  the proof of~\cite{old}*{Proposition~2.7} that such a set a.a.s.\ 
	has the desired size $\frac 12 \theta_\star^2 |V| \le |\cR|\le \theta_\star^2 |V|$
	and possesses the property that for all disjoint $\zeta_{\star\star}$-connectable 
	pairs $(x, y)$ and $(z, w)$ and all $i\in [3]$, the number of $xy$-$zw$-paths with 
	$\ell_i$ inner vertices all of which belong to $\cR$ is at least $\eta |\cR|^{\ell_i}$.
	Fix a reservoir set $\cR\subseteq V$ with this property. Now, if in addition to 
	the pairs $(x, y)$, $(z, w)$, and to $i\in [3]$, also a set $\cR'\subseteq \cR$ 
	with $|\cR'|\le \theta_{\star\star}|\cR|$ is given, we know that at most 
	$\ell_i |\cR'||\cR|^{\ell_i-1}\le 12\ell\theta_{\star\star} |\cR|^{\ell_i}
	<\eta |\cR|^{\ell_i}$ of these paths can contain an inner vertex from~$\cR'$, meaning that 
	the desired path with inner vertices only from $\cR\setminus \cR'$ exists.
\end{proof}

\begin{lemma}\label{lem:2140}
	For all $\alpha\in (0, 1/3)$ and $\theta_\star$ with $0\ll\theta_{\star}\ll\alpha$ 
	there is a $\zeta_{\star\star}\in (0, \theta_\star)$
	such that for every sufficiently large $M\in\NN$ with $M\equiv 2\pmod{3}$ 
	the following holds. 
	
	Given Setup~\ref{setup:1746} (with $\mu=\alpha/4$), a reservoir set $\cR\subseteq V$ 
	as provided by Lemma~\ref{lem:1743}, and a set $X\subseteq V\setminus \cR$ 
	with $|X|\le \theta_\star|V|$, 
	one can cover all but at most $2\theta_\star^2|V|$ vertices of~$H-(\cR\cup X)$ by disjoint 
	$M$-vertex paths whose end-pairs are $\zeta_{\star\star}$-connectable. 
\end{lemma}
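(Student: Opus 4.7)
Lemma~\ref{lem:2140} is essentially a restatement of a path-covering result from \cite{old} adapted to the present setting, and the plan is to follow the proof there with minor modifications to incorporate the $\zetass$-connectability of the end-pairs and the congruence $M\equiv 2\pmod{3}$.

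Fix $\zetass$ small relative to $\alpha$ and $\thetas$ (say $\zetass\ll \thetas^{3}\alpha$) and set $H^{\star}=H-(\cR\cup X)$. Since $|\cR\cup X|\le 2\thetas|V|$, this subhypergraph retains minimum vertex degree at least $(5/9+\alpha/2)|V|^{2}/2$ and therefore inherits a slightly weaker form of Setup~\ref{setup:1746}. The first step will be to show that the number of $M$-vertex tight paths in $H^{\star}$ whose two end-pairs are $\zetass$-connectable in $H$ is at least $c|V|^{M}$ for some constant $c=c(\alpha,\thetas)>0$. Here the congruence $M\equiv 2\pmod{3}$ is precisely the residue class arising from iterating the connecting lemma (Proposition~\ref{lem:con3}, or Corollary~\ref{all3} if a residue adjustment via bridges is needed), so tight paths of this exact length can be assembled from connectable seed-pairs; the density of $\zetass$-connectable pairs, obtained by double-counting against the lower bound on $e(R_v)$ in Setup~\ref{setup:1746}\,(iii) and applying Lemma~\ref{F41}, ensures that the relevant end-pairs remain $\zetass$-connectable throughout.

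In the second step I would form the auxiliary $M$-uniform hypergraph $\mathcal{A}$ on $V(H^{\star})$ whose edges are the vertex sets of these good $M$-paths. The estimates above give that $\mathcal{A}$ has vertex degrees of order $|V|^{M-1}$ while codegrees are of strictly smaller order $O(|V|^{M-2})$, so a standard semi-random (Pippenger--Spencer-type) argument then produces a matching in $\mathcal{A}$ covering all but $2\thetas^{2}|V|$ vertices of $V(H^{\star})$, which is exactly the desired cover by $M$-paths with $\zetass$-connectable end-pairs.

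\textbf{Main obstacle.} The key difficulty lies in the counting of the first step: the condition that \emph{both} end-pairs of an $M$-path be $\zetass$-connectable means one cannot simply build paths forwards from a single seed and invoke the degree condition. This will require a careful two-sided counting argument combining Corollary~\ref{all3} with the density estimates of Setup~\ref{setup:1746}\,(iii), while the choice of $\zetass$ must be small enough (below the density of connectable pairs) but large enough that the nibble argument in the second step still finds a near-perfect matching; balancing these parameters against $\thetas$ and $M$ is the main bookkeeping task.
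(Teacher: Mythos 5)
Your approach is a genuinely different route from the paper's, which simply appeals to the construction in~\cite{old}*{Section~7}: the proof of~\cite{old}*{Lemma~7.1} builds a long path $Q\subseteq H-X$ by concatenating, through the reservoir, members of a family $\ccC$ of disjoint $M$-vertex paths with $\zeta_{\star\star}$-connectable end-pairs; the paper observes that $\ccC$ itself \emph{is} the desired cover and bounds the left-over by the small numbers of reservoir and buffer vertices in $Q$ plus the $\theta_\star^2|V|$ vertices $Q$ misses. So the paper's ``proof'' is an extraction from an existing extension-based construction, whereas you propose to re-derive the cover from scratch via a nibble.

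The nibble plan has a gap that I do not see how to close without essentially reproducing the analysis of~\cite{old}. Pippenger--Spencer (or any variant of the semi-random method giving a near-perfect matching) requires the auxiliary $M$-uniform hypergraph $\cA$ to be approximately \emph{regular}, i.e.\ all vertex degrees within a $(1\pm o(1))$ factor of a common value $D$, together with codegrees $o(D)$. You only assert that degrees are ``of order $|V|^{M-1}$,'' and nothing in Setup~\ref{setup:1746}, Lemma~\ref{F41}, or Lemma~\ref{NB3} gives a per-vertex lower bound of that quality. Those results bound global counts (total number of bridges, total number of non-connectable $(x,y,z)$) but give no control over how the $M$-paths distribute across individual vertices; some vertices could participate in dramatically fewer $M$-paths with connectable ends (for instance vertices that lie in few of the robust graphs $R_u$, or vertices whose link contains few bridges), and these the nibble would systematically miss. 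Even if you decide to discard, say, $\theta_\star^2|V|$ low-degree vertices up front and run the nibble on the rest, you then need (i) to prove that the number of low-degree vertices really is that small, and (ii) that the codegree/degree ratio survives the deletion -- neither step is addressed, and step (i) again seems to require the machinery you are trying to avoid. By contrast, the extension argument of~\cite{old} is insensitive to such irregularities: it greedily grows paths from wherever it can, and the accounting for the small set of unusable vertices is already baked into the $\theta_\star^2|V|$ error term.

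A second, smaller issue: the claimed count of $c|V|^M$ $M$-vertex paths with both end-pairs $\zeta_{\star\star}$-connectable is left entirely as a sketch, and the congruence bookkeeping is more delicate than ``iterate Corollary~\ref{all3}.'' Each application of the corollary inserts a fixed number of inner vertices and requires the intermediate seed-pairs to be $\zeta_{\star\star}$-connectable; making $M-4$ inner vertices out of segments of lengths $\ell_1,\ell_2,\ell_3\le 12\ell$ (plus the $3$ vertices per inserted bridge) is a linear Diophantine problem whose solvability for the given residue class $M\equiv2\pmod 3$ needs to be checked, and one must also guard against vertex repetitions across the $\Theta(M/\ell)$ segments. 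None of this is fatal, but it is work, and it is precisely the work that~\cite{old} already did in constructing $Q$. The nibble route therefore does not save effort over the paper's citation-based proof, and as written it is missing the regularity input the nibble genuinely needs.
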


\begin{proof}
	This is implicit in~\cite{old}*{Section~7}, where an almost spanning path in $H$
	is constructed that avoids the absorbing path. More precisely,~\cite{old}*{Lemma~7.1}
	asserts that for a certain set~$X$ called~$V(P_A)$ there, there is a path $Q\subseteq H-X$
	satisfying 
	\begin{equation}\label{eq:2058}
		|V(H)\setminus (\cR\cup X\cup V(Q))|\le \theta_\star^2 |V|
	\end{equation} 
	and two further statements that are immaterial for our present concerns. 
	The only property of $X$ used in the proof of~\cite{old}*{Lemma~7.1}
	is that it consists of no more than $\theta_\star |V|$ vertices and thus we can 
	repeat the entire proof with an arbitrary such set. In the beginning of the proof 
	we fixed a sufficiently large $M\in 3\NN+2$ and below we will assume, in particular, 
	that $M\ge \theta_\star^{-2}$.
	
	Next we recall that $Q$ is constructed so as to contain many subpaths belonging to 
	the set
	\[
		\ccP=\bigl\{P\subseteq H-(X\cup \cR) \colon P \text{ is an $M$-vertex path
			whose end-pairs are $\zeta_{\star\star}$-connectable}\bigr\}\,.
	\]
	In fact, there is a set $\ccC\subseteq \ccP$ of mutually vertex-disjoint paths
	such that $Q$ starts and ends with a path from $\ccP$ and between any two ``consecutive''
	members of $\ccC$ appearing in~$Q$ there is either at most one vertex or there are only 
	vertices from $\cR$ (cf.\ clauses (b) and (c) in the definition of candidates in the proof of Lemma~7.1 in~\cite{old}).
	This property of~$Q$ guarantees
	\[
		 \Big|V(Q)\setminus \bigr(\bigcup\nolimits_{P\in \ccC}V(P) \cup \cR\bigl)\Big|
		 \le 
		 \frac{|V|}{M}
		 \le 
		 \theta_\star^2 |V|\,,
	\]
	which combined with~\eqref{eq:2058} yields
	\begin{multline*}
		\Big|V(H)\setminus \Bigl(\cR \cup X \cup \bigcup\nolimits_{P\in \ccC}V(P)\Bigr)\Big|
		\le
		\big|V(H)\setminus (\cR\cup X\cup V(Q))\big| \\
		+
		\Big|V(Q)\setminus \Bigl(\bigcup\nolimits_{P\in \ccC}V(P)\cup \cR\Bigr)\Big| 
		\le
		2\theta_\star^2 |V| \,.
	\end{multline*}
	In other words, $\ccC$ is the desired collection of paths. 
\end{proof}

\begin{proof}[Proof of Proposition~\ref{prop:1742}]
	Given $\alpha$, $\xi\in(0,1/3)$ we apply Lemma~\ref{lem:2140} with $\alpha$ 
	and $\theta_\star\ll \xi, \alpha$ and 
	obtain $\zeta_{\star\star}\in (0, \theta_\star)$. With this value of $\zeta_{\star\star}$
	we appeal to Lemma~\ref{lem:1743}, thus getting some $\theta_{\star\star}>0$. Next we pick 
	some $M\gg \theta_\star^{-1}, \theta_{\star\star}^{-1}$ with $M\equiv 2\pmod{3}$ which is so 
	large that the conclusion of Lemma~\ref{lem:2140} holds. Finally we take 
	$n_0\gg M, \ell, \theta_{\star}^{-1}, \theta_{\star\star}^{-1}$ so large that 
	we can apply the Lemmata~\ref{lem:1743} and~\ref{lem:2140} when $|V|\ge n_0$. 
	
	We shall prove that the infinite arithmetic progression  
	\[
		P=\bigl\{M'\in\NN\colon M'>n_0 \text{ and } M'\equiv 9\ell+15\pmod{M+1+3\ell}\bigr\}
	\]
	has the desired property. Since $9\ell+15$ and $M+1+3\ell$ are divisible by $3$, so 
	are all members of $P$. Now let Setup~\ref{setup:1746}, a collection $\ccB\subseteq V^3$
	of $\xi$-bridges with $|\ccB|\ge \xi |V|^3$ as well as a natural number $M'\in P$ be given. 
	We are to cover all but at most $\xi |V|+M'$ vertices of~$H$ by vertex-disjoint paths 
	consisting of $M'$ vertices which start and end with a $\xi$-bridge from $\ccB$. 
	If $|V|\le M'$ we can just take the empty collection of paths,
	so we may assume $|V| > M' > n_0$ from now on. Let $\cR\subseteq V$
	be a reservoir set as obtained from Lemma~\ref{lem:1743}. Consider a maximal 
	sequence $b_1, \ldots, b_r$ of $\xi$-bridges from $\ccB$ such that $\cR$ and these bridges 
	are mutually vertex-disjoint. Since the reservoir and the selected bridges 
	together involve $|\cR|+3r$ vertices, we have $3(|\cR|+3r)|V|^2\ge |\ccB|\ge \xi |V|^3$, 
	whence 
	\[
		r\ge \frac{\xi |V|-3|\cR|}{9} \ge \frac{(\xi-3\theta_\star^2)|V|}{9}>
		\theta_\star |V|\,.
	\]
	In particular, we can choose $x=\lfloor \theta_\star |V|/3\rfloor$ bridges in $\ccB$
	that are vertex-disjoint both from each other and from the reservoir. Define $X\subseteq V$
	to be the set of the $3x$ vertices occurring in such a list of $\xi$-bridges. 
	
	By Lemma~\ref{lem:2140} there is a collection~$\ccC$ of disjoint $M$-vertex paths in $H-(X\cup \cR)$
	covering all but at most $2\theta_\star^2 |V|$ vertices of $V(H)\setminus (X\cup \cR)$
	which start and end 
	with $\zeta_{\star\star}$-connectable pairs. Due to $M'> n_0\gg M, \ell$ the natural 
	number $k$ defined by 
	\[
		M'=(M+1+3\ell)k+(9\ell+15)
	\]
	satisfies $k\ge \sqrt{n_0}$. Take an arbitrary partition 
	\[
		\ccC=\ccC_1\dcup\ldots\dcup \ccC_\lambda\dcup \ccC_{\lambda+1}
	\]
	such that $|\ccC_1|=\ldots=|\ccC_\lambda|=k>|\ccC_{\lambda+1}|$. 
	For every $j\in [\lambda]$, we want to connect the $k$ paths in $\ccC_j$ by means 
	of $k-1$ connections through the reservoir to a path $P_j$. For each of these connections,
	we want to use $3\ell+1$ vertices from $\cR$, so we will have 
	\[
		v(P_j)=kM+(k-1)(3\ell+1)=M'-(12\ell+16)
	\]
	for every $j\in [\lambda]$. Altogether, these connections require at most
	\[
		(3\ell+1)|\ccC|\le (3\ell+1)\frac{|V|}{M}\le \frac{\theta_{\star\star}}{2}|\cR|
	\]
	vertices from the reservoir, so there is no problem in choosing them one by one. 
	
	Our strategy to continue is that for every $j\in [\lambda]$ we want to connect 
	the ends of the path $P_j$ to two of the $\xi$-bridges that have been put aside 
	into the set $X$. These connections are to be made through the reservoir and 
	for one of them we want to use $3\ell+1$ inner vertices, while the other one
	is supposed to use $\ell_3=9\ell+9$ inner vertices. Thereby each path $P_j$
	gets extended to a path $Q_j$ with
	\[
		v(Q_j)=v(P_j)+(9\ell+9)+(3\ell+1)+6=M'\,.
	\]
	There are indeed sufficiently many bridges contributing to $X$ for this plan, because
	\[
		2\lambda 
		\le 
		\frac{2|V|}k 
		\le 
		\frac{2|V|}{\sqrt{n_0}}
		\le 
		\frac{\theta_\star |V|}4
		<
		x\,.
	\]
	In fact, we even have 
	\[
		(12\ell+10)\lambda 
		\le 
		\frac{(12\ell+10)|V|}k 
		\le 
		\frac{|V|}{\sqrt[3]{n_0}}
		\le 
		\frac{\theta_{\star\star}|\cR|}2\,,
	\]
	which shows that the reservoir stays sufficiently intact while we are constructing 
	the paths $Q_1, \ldots, Q_\lambda$. Finally, the number of vertices that these 
	paths fail to cover is at most 
	\begin{align*}
		\Big|V\setminus \bigcup\nolimits_{P\in \ccC}V(P)\Big|
		+
		\Big|\bigcup\nolimits_{P\in \ccC_{\lambda+1}} V(P)\Big|
		& \le
		|\cR|+|X|+2\theta_\star^2 |V| + M' \\
		& \le 
		(3\theta_\star^2+\theta_\star) |V|+ M'
		\le
		\xi |V| +M'\,. \qedhere
	\end{align*}
\end{proof}

\section{Connecting lemma}\label{conn}
In this section we establish appropriate extensions 
of Proposition~\ref{lem:con3} and Corollary~\ref{all3} for $4$-uniform hypergraphs 
(see Proposition~\ref{lem:con} and Corollary~\ref{all4} below). In particular,
from now on $H$ is a 4-uniform hypergraph.

\subsection{Connectable triples in 4-uniform hypergraphs}
\label{subsec:conn_trip_4-unif}
Given a $4$-uniform hypergraph $H=(V,E)$ with minimum pair degree 
$\delta_2(H)\geq (5/9+\alpha)|V|^2/2$ we observe that the link~$H_{uv}$ of a pair  
of vertices $u$, $v\in V$ is a graph with edge density at least $5/9+\alpha$. Consequently,
Proposition~\ref{prop:robust} provides the existence of a robust subgraph in every joint link 
and we collect this information in the following setup. 

\begin{setup}\label{setup:2335}
	Suppose that $\alpha\in(0,1/3)$, $\beta>0$, that $\ell\geq 3$ is an odd integer, that $H=(V, E)$
	is a sufficiently large $4$-uniform hypergraph with $|V|=n$ 
	and $\delta_2(H)\ge (5/9+\alpha)n^2/2$, and that for every $\{u,v\}\in V^{(2)}$ 
	we have fixed a $(\beta, \ell)$-robust 
	subgraph $R_{uv}\subseteq H_{uv}$ of its link graph given 
	by Proposition~\ref{prop:robust} applied with $\mu=\alpha^3/18$.
\end{setup}

Let us remark that in this situation the vertices $u$ and $v$ are isolated in $H_{uv}$,
for which reason they cannot belong to the robust subgraph $R_{uv}$. 
Similarly, the vertex $v$ is isolated in the ($3$-uniform) link hypergraph $H_v$. 
So to make the results of~\S\ref{subsec:2341} applicable 
it turns out to be more convenient to work with the $3$-uniform hypergraph 
\[
	\overline{H}_v=H_v-v
\]
obtained from $H_v$ by removing the vertex $v$. Clearly 
this hypergraph has $n-1$ vertices and it satisfies the minimum degree condition 
$\delta_1(\overline{H}_v)\ge (5/9+\alpha)n^2/2\ge (5/9+\alpha)|V(\overline{H}_v)|^2/2$.

Moreover, $\overline{H}_v$ together with the family of 
graphs 
\[
	\bigl\{R_{uv}\colon u\in V(\overline{H}_v)\bigr\}
\]
exemplifies Setup~\ref{setup:1746}. 
Thus, whenever Setup~\ref{setup:2335}, a constant $\zeta>0$,
and a vertex $v\in V$ are given, we can speak of $\zeta$-connectable pairs in 
$\overline{H}_v$ and the notion of a $\zeta$-bridge in $\overline{H}_v$ is defined
as well. 

We continue with the definition of connectable triples in $4$-uniform hypergraphs, which pivots
on bridges in the $3$-uniform links of vertices.

\begin{dfn}\label{def:connectable4}
	Given Setup~\ref{setup:2335} and $\zeta>0$, a triple $(x,y,z)\in V^3$ is 
	said to be {\it $\zeta$-connectable in $H$} if the set
	\[
		U_{xyz}=\{v\in V\colon (x,y,z) \mbox{ is a $\zeta$-bridge in }\overline{H}_v\}
	\]
	satisfies $|U_{xyz}|\ge \zeta|V|$.
\end{dfn}

In general, changing the ordering of $x$, $y$, and $z$ can affect whether a 
triple $(x, y, z)$ is $\zeta$-connectable. It is easy to see, however, that 
reversing the ordering cannot have such an effect, i.e., $(z, y, x)$ 
is $\zeta$-connectable if and only if $(x, y, z)$ is. 

\begin{prop}[Connecting lemma] \label{lem:con} 
	Given Setup~\ref{setup:2335} and $\zeta>0$, there is 
	$\theta>0$ such that
	if $(a,b,c)$ and $(x,y,z)$ are disjoint, $\zeta$-connectable triples in $H$, 
	then the number of $abc$-$xyz$-paths in $H$ with $8\ell+10$ inner vertices 
	is at least $\theta n^{8\ell+10}$.	
\end{prop}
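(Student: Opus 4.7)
We aim to construct the desired $4$-uniform tight paths by reducing to the $3$-uniform connecting lemma (Corollary~\ref{all3}) applied inside the $3$-uniform links of two anchor vertices, glued together by a common bridge.

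\emph{Anchor selection.} Since $|U_{abc}|,|U_{xyz}|\ge\zeta n$, there are $\Omega(n^2)$ ordered pairs $(\pi,\rho)$ with $\pi\in U_{abc}$, $\rho\in U_{xyz}$, $\pi\ne\rho$, and $\{\pi,\rho\}\cap\{a,b,c,x,y,z\}=\emptyset$. For each such pair, $\pi abc$ and $\rho xyz$ are edges of $H$; the pairs $ab,bc$ are $\zeta$-connectable in the $3$-uniform hypergraph $\overline{H}_\pi$, while $xy,yz$ are $\zeta$-connectable in $\overline{H}_\rho$. Together with the robust subgraphs $\{R_{\pi u}\}_{u}$ and $\{R_{\rho u}\}_{u}$ supplied by Setup~\ref{setup:2335}, each of $\overline{H}_\pi$ and $\overline{H}_\rho$ meets Setup~\ref{setup:1746} with $\mu=\alpha^3/18$, so Corollary~\ref{all3} and Lemma~\ref{L35} are available inside both.

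\emph{A common bridge.} Applying Lemma~\ref{L35} with $(H,H')\leftarrow(\overline{H}_\pi,\overline{H}_\rho)$ produces at least $\tfrac\alpha2 n^3$ triples $(p,q,r)\in V^3$ that are $\zeta$-bridges of $\overline{H}_\pi$ and satisfy $pqr\in E(\overline{H}_\rho)$. A symmetric application (with the roles of $\pi$ and $\rho$ swapped) together with an inclusion--exclusion argument leaves $\Omega(n^3)$ triples that are simultaneously $\zeta$-bridges of \emph{both} $\overline{H}_\pi$ and $\overline{H}_\rho$ and are disjoint from all previously fixed vertices. Fix such a triple $(p,q,r)$; then $\pi pqr,\rho pqr\in E(H)$, and the pairs $pq,qr$ are $\zeta$-connectable in both $\overline{H}_\pi$ and $\overline{H}_\rho$.

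\emph{Two $3$-uniform connections.} Use Corollary~\ref{all3} inside $\overline{H}_\pi$ to connect the $\zeta$-connectable pair $(b,c)$ to $(p,q)$ by a $3$-uniform tight path $P_\pi$ with $\ell_{i_1}$ inner vertices, and inside $\overline{H}_\rho$ to connect $(q,r)$ to $(x,y)$ by a $3$-uniform tight path $P_\rho$ with $\ell_{i_2}$ inner vertices, choosing $i_1,i_2\in\{1,2,3\}$ so that the concatenated sequence
\[
  a,\,b,\,c,\,\pi,\,\mathrm{int}(P_\pi),\,p,\,q,\,r,\,\rho,\,\mathrm{int}(P_\rho),\,x,\,y,\,z
\]
has exactly $8\ell+10$ inner vertices; the three residues modulo $3$ of $\ell_1,\ell_2,\ell_3$ supply the required flexibility. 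The $4$-uniform edges adjacent to $\pi$ (resp.\ $\rho$) are certified by the $3$-uniform tight-path property of $P_\pi$ (resp.\ $P_\rho$), while the edges straddling the middle triple $(p,q,r)$ are certified by its bridge property.

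\emph{Main obstacle and counting.} The chief remaining difficulty is that the $4$-uniform edges sitting deep inside $P_\pi$ or $P_\rho$ and not adjacent to either anchor are not automatically edges of $H$, since a single anchor witnesses only a bounded number of consecutive lifted $4$-uniform edges around itself. This gap is closed by inserting further auxiliary anchors along $P_\pi$ and $P_\rho$, each selected from the abundant supply of $\zeta$-bridges guaranteed by Lemma~\ref{NB3} and Lemma~\ref{L35} inside the corresponding link; each such auxiliary anchor contributes a bounded number of additional lifted edges, and the precise value $8\ell+10$ is what results from this iterated procedure together with the chosen values $\ell_{i_1},\ell_{i_2}$. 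Multiplying the available choices at every stage and subtracting the $O(n^{8\ell+9})$ sequences ruled out by vertex coincidences yields at least $\theta\,n^{8\ell+10}$ genuine $abc$-$xyz$-paths, for some $\theta=\theta(\alpha,\beta,\ell,\zeta)>0$.
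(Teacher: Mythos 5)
You have correctly identified the right high-level strategy — anchor the connectable triples on vertices from $U_{abc}$ and $U_{xyz}$ and reduce to the $3$-uniform connecting machinery inside their links — and, importantly, the chief obstacle: a single anchor only certifies the few $4$-uniform edges adjacent to it, while the $4$-uniform edges deep inside the lifted path are not automatically present in $H$. However, the proposal to close this gap by "inserting further auxiliary anchors from the abundant supply of $\zeta$-bridges" is a genuine gap, not a missing detail. An auxiliary anchor $u'$ placed between $p_i$ and $p_{i+1}$ must be a vertex whose link $\overline{H}_{u'}$ contains the four \emph{specific} consecutive $3$-edges $p_{i-2}p_{i-1}p_i$, $p_{i-1}p_ip_{i+1}$, $p_ip_{i+1}p_{i+2}$, $p_{i+1}p_{i+2}p_{i+3}$ of the path already fixed inside $\overline{H}_\pi$. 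Lemmas~\ref{NB3} and~\ref{L35} certify abundance of bridges, but say nothing about the existence of vertices whose links contain a prescribed path segment; once $\pi$ and $P_\pi$ are fixed in your order, nothing prevents the codegree $d_H(p_{i-2}p_{i-1}p_i)$, say, from being too small to continue.

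The paper's proof overcomes exactly this by reversing the order of quantifiers. It first defines a set $T$ of tuples $(u, \seq p, \seq q, \seq r, w)\in V^{6\ell+8}$ in which the anchors $u\in U_{abc}$, $w\in U_{xyz}$ and the ``template'' $\seq m=(\seq p,\seq q,\seq r)$ are chosen \emph{jointly}, subject to the constraints that $bc\,\seq p\,q_1q_2$ is a tight path in $\overline{H}_u$, that $q_3q_4\,\seq r\,xy$ is a tight path in $\overline{H}_w$, and that $\seq q$ is a length-$3$ walk in the pair-link graph $R_{uw}$; it then shows $|T|=\Omega(n^{6\ell+8})$. Forming an auxiliary $3$-partite $3$-uniform hypergraph on $V^{6\ell+6}\dcup V\dcup V$ and applying a convexity (Jensen) estimate to the link sizes $|A_{\seq m}|$, it locates a single template $\seq m$ admitting $\ell+2$ compatible anchor pairs; these anchors are then interleaved into $\seq p$ and $\seq r$ at every fourth position, so that every $4$-edge straddling an anchor $u_i$ is witnessed by the fact that the \emph{whole} template path lies in $\overline{H}_{u_i}$, and the middle edges are witnessed by $\seq q$ being a walk in $R_{u_{\ell+2}w_{\ell+2}}$. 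Separately, your inclusion--exclusion claim for a common $\zeta$-bridge of $\overline{H}_\pi$ and $\overline{H}_\rho$ also does not follow: each application of Lemma~\ref{L35} gives a set of size at least $\tfrac{\alpha}{2} n^3$ inside a universe of size $n^3$, which with $\alpha<1/3$ does not force a nonempty intersection; the paper sidesteps this by using the walk $q_1q_2q_3q_4$ in $R_{uw}$ rather than a common bridge.
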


\begin{proof}[Proof of Proposition \ref{lem:con}]
	By monotonicity we may suppose that $\zeta<\frac 1{48}$. 
	Let $\theta_3$ denote the constant obtained by applying Proposition~\ref{lem:con3}
	to $\alpha$, $\beta$, $\ell$, and $\zeta^3$. We shall prove 
	that 
	\begin{equation}\label{eq:1253}
		\theta=\frac12\zeta^{3\ell+6}\theta_3^{2\ell+4}
	\end{equation}
	has the desired property. To this end we fix two disjoint $\zeta$-connectable 
	triples $(a, b, c)$ and~$(x, y, z)$. Consider the set $T$ of all sequences
	\[
		(u, \seq p, \seq q,  \seq r, w)\in V^{6\l+8}
	\]
	with
	\[
		\seq{p}=(p_1, \ldots, p_{3\ell+1})\,,\quad
		\seq{q}=(q_1,q_2,q_3,q_4)\,,
		\qand 
		\seq{r}=(r_{3\ell+1}, \ldots, r_1)
	\]
	such that the following six conditions hold:
	\begin{enumerate}[label=\nlabel]
		\item\label{it:Q1} $u\neq w$ and $u\in U_{abc}$, $w\in U_{xyz}$,
		\item\label{it:Q2} $\seq q$ spans a walk of length $3$ 
			in the robust subgraph $R_{uw}$ of the link graph $H_{uw}$,
		\item\label{it:Q3} $q_1q_2$ is $\zeta^3$-connectable in $\overline{H}_u$,
		\item\label{it:Q4} $q_3q_4$ is $\zeta^3$-connectable in $\overline{H}_w$,
		\item\label{it:Q5} $(b, c, \seq{p},q_1,q_2)$ spans a  $3$-uniform  path of length $3\l+1$ 
			in the link $\overline{H}_u$,
		\item\label{it:Q6} $(q_3,q_4,\seq{r}, x, y)$ spans a  $3$-uniform  path of length $3\l+1$
			in the link $\overline{H}_w$.
	\end{enumerate}
	We establish the following lower bound on the size of set $T$ defined above. 
	\begin{claim}\label{Q} 
		We have $|T|\ge\zeta^3\theta_3^2 n^{6\ell+8}$.
	\end{claim}
	\begin{proof} 
		Our first step is to show that the set 
		\[
			S=\{(u,\seq q, w)\in V^6\colon u\neq w, u\in U_{abc}, w\in U_{xyz}, 
			\text{ and } \seq q \mbox{ spans a walk of length $3$ in } R_{uw}\}
		\]
		of all sextuples satisfying~\ref{it:Q1} and~\ref{it:Q2} satisfies 
		\begin{equation}\label{eq:1056}
			|S|\ge\tfrac1{12}\zeta^2n^6\,.
		\end{equation}
		In fact, in view of Definition \ref{def:connectable4} the $\zeta$-connectability
		of $(a, b, c)$ and $(x, y, z)$ ensures that there are $\zeta n\cdot(\zeta n-1)$ 
		possibilities to 
		choose the pair $(u,w)$. Thus for the proof of~\eqref{eq:1056} it suffices to show 
		that for every pair $(u, w)\in U_{abc}\times U_{xyz}$ the number of $3$-edge walks
		in~$R_{uw}$ is at least $cn^4$ for some $c>1/12$. A result of Blakley and 
		Roy~\cite{BR} (asserting the validity of Sidorenko's conjecture for paths)
		combined with Proposition~\ref{prop:robust}\,\ref{it:rc3}
		entails that the number of these walks is indeed at least 
		\[
			\frac{(2e(R_{uv}))^3}{v(R_{uv})^2}
			\ge
			\frac{(4n^2/9)^3}{n^2}
			=
			\frac{4^3}{9^3}n^4
			>
			\frac{n^4}{12}\,.
		\]
		Thereby~\eqref{eq:1056} is proved and we proceed by estimating the set 
		\[
			S^\star=\{(u,\seq q,w)\in S\colon  
			\text{$q_1q_2$ is $\zeta^3$-connectable in $\overline{H}_{u}$
			and $q_3q_4$ is $\zeta^3$-connectable in $\overline{H}_{w}$}\}
		\]
		of all sextuples satisfying~\ref{it:Q1}\,--\,\ref{it:Q4}. By two successive 
		applications of Lemma~\ref{F41} we shall show
		\begin{equation}\label{eq:1143}
			|S\setminus S^\star|\le 2\zeta^3 n^6\,.
		\end{equation}
		Indeed, for every fixed triple $(u, q_3, q_4)\in V^3$, Lemma~\ref{F41} 
		applied to $\overline{H}_u$ and $\zeta^3$ (in place 
		of~$H$ and $\zeta$) tells us that there are at most $\zeta^3 n^3$ 
		triples $(q_1, q_2, w)$ with $q_1q_2\in E(R_{uw})$ for which~$q_1q_2$ 
		fails to be $\zeta^3$-connectable in $\overline{H}_{u}$.
		Similarly, for every fixed triple $(q_1, q_2, w)\in V^3$, Lemma~\ref{F41} applied to 
		$\overline{H}_w$ and $\zeta^3$ tells us that there are at most $\zeta^3 n^3$ 
		triples $(u, q_3, q_4)$ with $q_3q_4\in E(R_{uw})$ for which~$q_3q_4$
		fails to be $\zeta^3$-connectable in~$\overline{H}_{w}$. So altogether we have 
		$|S\setminus S^\star|\le 2n^3\cdot \zeta^3 n^3$, which proves~\eqref{eq:1143}.
		
		As a direct consequence of~\eqref{eq:1056},~\eqref{eq:1143}, and $\zeta<\frac 1{48}$
		we obtain
		\begin{equation}\label{eq:1154}
			|S^\star|\ge \frac1{12}\zeta^2n^6-2\zeta^3 n^6\ge 2\zeta^3n^6\,.
		\end{equation}
		Now by Proposition~\ref{lem:con3} and the definition of $S^\star$, for every 
		sextuple $(u,\seq q, w)$ there are at least $\theta_3(n-1)^{3\ell+1}$ 
		sequences $\seq p$ as demanded by~\ref{it:Q5} and there is at 
		least the same number of sequences $\seq r$ as 
		required by~\ref{it:Q6}. Consequently, we have 
		\[
			|T|
			\ge 
			|S^\star|\left(\theta_3(n-1)^{3\ell+1}\right)^2
			\overset{\eqref{eq:1154}}{\ge}
			\zeta^3\theta_3^2n^{6\ell+8}
		\]
		for sufficiently large $n$ and this concludes the proof of Claim~\ref{Q}.
	\end{proof}

	Now consider an auxiliary 3-partite 3-uniform hypergraph $A$ with vertex classes $M$, $U$,
	and $W$, where $M=V^{6\ell+6}$, while $U$ and $W$ are two copies of $V$. We represent 
	the vertices in $M$ as sequences
	\[
		\seq{m}
		=
		(p_1, \ldots, p_{3\ell+1},q_1,q_2,q_3,q_4,r_{3\ell+1}, \ldots, r_1)
		=
		(\seq p, \seq q, \seq r)\,.
	\]
	The edges of $A$ 
	are defined to be the triples $\{u, \seq{m}, w\}$ with $\seq{m}\in M$, $u\in U$, 
	$w\in W$, and $(u, \seq{m}, w)\in T$. Thus Claim~\ref{Q} implies 
	\begin{equation}\label{eq:1244}
		e(A)=|T|\ge \zeta^3\theta_3^2 n^{6\l+8}= \zeta^3\theta_3^2 |M||U||W|\,.
	\end{equation}
	For every vertex $\seq{m}\in M$ we consider its (ordered) bipartite link graph
	\[
		A_{\seq{m}}=\{(u, w)\in U\times W\colon \seq{m}uw\in E(A)\}\,.
	\]
	A standard convexity argument yields 
	\begin{equation}\label{eq:1251}
		\sum_{\seq{m}\in M} |A_{\seq{m}}|^{\ell+2}
		\ge
		|M|\left(\frac{e(A)}{|M|}\right)^{\ell+2}
		\overset{\eqref{eq:1244}}{\ge}
		\zeta^{3\ell+6}\theta_3^{2\ell+4} n^{8\ell+10}
		\overset{\eqref{eq:1253}}{=}
		2\theta n^{8\ell+10}\,.
	\end{equation}

	\begin{figure}[t]
	\begin{tikzpicture}[scale=1]
	
		\def\an{12.8};			
		\def\ra{2.5cm};        
		\def\s{1.2};  		
	
		\coordinate (c6) at (-7*\an:\ra);
		\coordinate (b6) at (-5*\an:\ra);
		\coordinate (b5) at (-3*\an:\ra);
		\coordinate (b2) at (3*\an:\ra);
		\coordinate (b1) at (5*\an:\ra);
		\coordinate (w) at (-\an:\ra);
		\coordinate (u) at (\an:\ra);
		\coordinate (a7) at (7*\an:\ra);
						
		\coordinate (a) at ($(a7)+( -10*\s, 0)$);
		\coordinate (b) at ($(a7)+(-9*\s, 0)$);
		\coordinate (c) at ($(a7)+(-8*\s, 0)$);
		\coordinate (a1) at ($(a7)+(-7*\s, 0)$);
		\coordinate (a2) at ($(a7)+(-6*\s, 0)$);
		\coordinate (a3) at ($(a7)+(-5*\s,0)$);
		\coordinate (a4) at ($(a7)+(-4*\s, 0)$);
		\coordinate (a5) at ($(a7)+(-3*\s, 0)$);
		\coordinate (a6) at ($(a7)+(-\s, 0)$);
		\coordinate (a56) at ($(a5)!.5!(a6)$);

		\coordinate (z) at ($(c6)+(-10*\s, 0)$);
		\coordinate (y) at ($(c6)+(-9*\s, 0)$);
		\coordinate (x) at ($(c6)+(-8*\s, 0)$);
		\coordinate (c1) at ($(c6)+(-7*\s, 0)$);
		\coordinate (c2) at ($(c6)+(-4*\s, 0)$);
		\coordinate (c12) at ($(c1)!.5!(c2)$);
		\coordinate (c3) at ($(c6)+(-3*\s, 0)$);
		\coordinate (c4) at ($(c6)+(-2*\s, 0)$);
		\coordinate (c5) at ($(c6)+(-\s, 0)$);
				
		\def\p{.9};     
		
		\coordinate (w1) at ($(x)!.5!(c1)+(0,\p)$);
		\coordinate (w2) at ($(c2)!.5!(c3)+(0,\p)$);
		\coordinate (w3) at ($(c5)!.4!(c6)+(0,\p)$);
		
		\coordinate (u1) at ($(c)!.5!(a1)+(0,-\p)$);
		\coordinate (u2) at ($(a3)!.5!(a4)+(0,-\p)$);
		\coordinate (u3) at ($(a6)!.4!(a7)+(0,-\p)$);

		\redge{(w1)}{(x)}{(y)}{(z)}{7.5pt}{1.5pt}{yellow!80!black}{yellow,opacity=0.2};
		\redge{(w1)}{(c1)}{(x)}{(y)}{7.5pt}{1.5pt}{yellow!80!black}{yellow,opacity=0.2};
		\redge{(w2)}{(c4)}{(c3)}{(c2)}{7.5pt}{1.5pt}{yellow!80!black}{yellow,opacity=0.2};
		\redge{(w2)}{(c5)}{(c4)}{(c3)}{7.5pt}{1.5pt}{yellow!80!black}{yellow,opacity=0.2};
		
		\redge{(w3)}{(c5)}{(c4)}{(c3)}{7.5pt}{1.5pt}{yellow!80!black}{yellow,opacity=0.2};
		\redge{(w3)}{(c6)}{(c5)}{(c4)}{7.5pt}{1.5pt}{yellow!80!black}{yellow,opacity=0.2};
		\redge{(w3)}{(b6)}{(c6)}{(c5)}{7.5pt}{1.5pt}{yellow!80!black}{yellow,opacity=0.2};
		\redge{(w3)}{(b5)}{(b6)}{(c6)}{7.5pt}{1.5pt}{yellow!80!black}{yellow,opacity=0.2};
		
		\redge{(w)}{(b5)}{(b6)}{(c6)}{7.5pt}{1.5pt}{yellow!80!black}{yellow,opacity=0.2};
		\redge{(u)}{(w)}{(b5)}{(b6)}{7.5pt}{1.5pt}{yellow!80!black}{yellow,opacity=0.2};
		\redge{(b2)}{(u)}{(w)}{(b5)}{7.5pt}{1.5pt}{yellow!80!black}{yellow,opacity=0.2};
		\redge{(b1)}{(b2)}{(u)}{(w)}{7.5pt}{1.5pt}{yellow!80!black}{yellow,opacity=0.2};
		\redge{(a7)}{(b1)}{(b2)}{(u)}{7.5pt}{1.5pt}{yellow!80!black}{yellow,opacity=0.2};
		
		\redge{(u3)}{(a7)}{(b1)}{(b2)}{7.5pt}{1.5pt}{yellow!80!black}{yellow,opacity=0.2};
		\redge{(u3)}{(a6)}{(a7)}{(b1)}{7.5pt}{1.5pt}{yellow!80!black}{yellow,opacity=0.2};
		
		\redge{(u2)}{(a3)}{(a4)}{(a5)}{7.5pt}{1.5pt}{yellow!80!black}{yellow,opacity=0.2};
		\redge{(u2)}{(a2)}{(a3)}{(a4)}{7.5pt}{1.5pt}{yellow!80!black}{yellow,opacity=0.2};
		\redge{(u2)}{(a1)}{(a2)}{(a3)}{7.5pt}{1.5pt}{yellow!80!black}{yellow,opacity=0.2};
		
		\redge{(u1)}{(a)}{(b)}{(c)}{7.5pt}{1.5pt}{yellow!80!black}{yellow,opacity=0.2};
		\redge{(u1)}{(b)}{(c)}{(a1)}{7.5pt}{1.5pt}{yellow!80!black}{yellow,opacity=0.2};
		\redge{(u1)}{(c)}{(a1)}{(a2)}{7.5pt}{1.5pt}{yellow!80!black}{yellow,opacity=0.2};
		\redge{(u1)}{(a1)}{(a2)}{(a3)}{7.5pt}{1.5pt}{yellow!80!black}{yellow,opacity=0.2};

		\foreach \i in {c6,b6,b5,w,u,b2,b1,a,b,c,x,y,z}{
			\fill  (\i) circle (2pt);}
		\foreach \i in {1,...,7}{
			\fill  (a\i) circle (2pt);}
		\foreach \i in {1,...,5}{
			\fill  (c\i) circle (2pt);}
		\foreach \i in {1,...,3}{
			\fill  (w\i) circle (2pt);
			\fill  (u\i) circle (2pt);}

		\node at (c12) {\Huge $\dots$};
		\node at (a56) {\Huge $\dots$};
		
		\def\x{.42};  
		\def\y{.63}; 
		
		\foreach \i in {x,y,z}{
			\node[anchor=base] at ($(\i)+(0,-\y)$) {$\i$};}
		
		\foreach \i in {a,b,c}{
			\node[anchor=base] at ($(\i)+(0,\x)$) {$\i$};}
		
		\node[anchor=base] at ($(c1)+(0,-\y)$) {$r_1$};
		\node[anchor=base] at ($(c2)+(0,-\y)$) {$r_{3\ell -3}$};
		\node[anchor=base] at ($(c3)+(0,-\y)$) {$r_{3\ell - 2}$};
		\node[anchor=base] at ($(c4)+(0,-\y)$) {$r_{3\ell - 1}$};
		\node[anchor=base] at ($(c5)+(0,-\y)$) {$r_{3\ell}$};
		\node[anchor=base] at ($(c6)+(0,-\y)$) {$r_{3\ell +1}$};
		
		\node[anchor=base] at ($(a1)+(0,\x)$) {$p_1$};
		\node[anchor=base] at ($(a2)+(0,\x)$) {$p_2$};
		\node[anchor=base] at ($(a3)+(0,\x)$) {$p_3$};
		\node[anchor=base] at ($(a4)+(0,\x)$) {$p_4$};
		\node[anchor=base] at ($(a5)+(0,\x)$) {$p_5$};
		\node[anchor=base] at ($(a6)+(0,\x)$) {$p_{3\ell}$};
		\node[anchor=base] at ($(a7)+(0,\x)$) {$p_{3\ell +1}$};
	
		\node at ($(b6)+(-5*\an:.57)$) {$q_4$};
		\node at ($(b5)+(-3*\an:.58)$) {$q_3$};
		\node[anchor=east] at ($(w)-(0.8,0)$) {$w_{\ell+2}$};
		\node[anchor=east] at ($(u)-(0.8,0)$) {$u_{\ell+2}$};
		\node at ($(b2)+(3*\an:.55)$) {$q_2$};
		\node at ($(b1)+(5*\an:.52)$) {$q_1$};
		
		\node[anchor=base] at ($(w1)+(0,\x)$) {$w_1$};
		\node[anchor=base] at ($(w2)+(0,\x)$) {$w_\ell$};
		\node[anchor=base] at ($(w3)+(0,\x)$) {$w_{\ell+1}$};
		
		\node[anchor=base] at ($(u1)+(0,-\y)$) {$u_1$};
		\node[anchor=base] at ($(u2)+(0,-\y)$) {$u_2$};
		\node[anchor=base] at ($(u3)+(0,-\y)$) {$u_{\ell+1}$};
	\end{tikzpicture}
	\caption{Connecting $(a,b,c)$ and $(x,y,z)$.}
	\label{fig:con}
\end{figure}

	As we will check below, if $\seq{m}\in M$ 
	and $(u_1, w_1), \ldots, (u_{\ell+2}, w_{\ell+2})\in A_{\seq{m}}$, then 
	\[
		abcu_1p_1p_2p_3u_2\ldots u_{\l+1}p_{3\l+1}q_1q_2u_{\l+2} 
		w_{\l+2}q_3q_4r_{3\l+1}w_{\l+1}r_{3\l}r_{3\l-1}r_{3\l-2}w_{\l}\dots w_1xyz
	\]
	is an $abc$-$xyz$-walk in $H$ with $8\ell+10$ inner vertices (see Figure~\ref{fig:con}). By~\eqref{eq:1251}
	this argument produces at least $2\theta n^{8\ell+10}$
	such walks and, as at most $O(n^{8\ell+9})$ of them can fail to be paths, this 
	will conclude the proof of Proposition~\ref{lem:con}.

	It remains to verify that any four consecutive vertices in the above sequence form 
	an edge of~$H$. Recall that $(u_i, \seq{m}, w_i)\in T$ for every $i\in [\ell+2]$.
	So~\ref{it:Q1} implies $abcu_1\in E$ and $w_1xyz\in E$, respectively.
	Since $bcp_1\dots p_{3\l+1}q_1q_2$ is a 3-uniform path in each of the link 
	hypergraphs $\overline{H}_{u_1}, \ldots, \overline{H}_{u_{\ell+2}}$ 
	by~\ref{it:Q5}, 
	we have 
	\[
		bcu_1p_1,\; cu_1p_1p_2,\; u_1p_1p_2p_3,\; p_1p_2p_3u_2,\;\dots,\; 
		p_{3\ell+1}q_1q_2u_{\ell+2}\in E
	\]
	and a similar argument utilising~\ref{it:Q6} establishes 
	\[
		w_{\l+2}q_3q_4r_{3\l+1},\;q_3q_4r_{3\l+1}w_{\l+1},\dots,\; r_1w_1xy\in E\,. 
	\]
	It remains to note, that by~\ref{it:Q2} we have
	\[
		q_1q_2u_{\l+2}w_{\l+2},\;q_2u_{\l+2}w_{\l+2}q_3,\;u_{\l+2}w_{\l+2}q_3q_4\in E\,,
	\]
	which completes the proof of Proposition~\ref{lem:con}.
\end{proof}

\subsection{Other residue classes}
The almost spanning cycle to be constructed in Section~\ref{sec:main-pf} will be obtained 
from an almost spanning path cover with the help 
of the connecting lemma. The number of inner vertices appearing in the last connection 
will determine in which residue class modulo four the number of left-over vertices will lie. 
As the nature of our absorbing mechanism requires that the number of left-over
vertices should be divisible by four, it will be useful to strengthen
the connecting lemma as follows. 

\begin{cor}\label{all4}
	Given Setup~\ref{setup:2335} and $\zeta>0$, there 
	exist natural numbers $\ell_1, \ell_2, \ell_3, \ell_4\le 50\ell$
	with~$\ell_i\equiv i\pmod{4}$ for all $i\in [4]$ and $\theta=\theta(\alpha,\beta,\l,\zeta)>0$
	such that the following holds.
	
	If $(a, b, c)$, $(x, y, z)$ are disjoint $\zeta$-connectable 
	triples of vertices of $H$, then for every~$i\in[4]$ the number of $abc$-$xyz$-paths in $H$ 
	with $\ell_i$ inner vertices is 
	at least $\theta|V|^{\ell_i}$.
\end{cor}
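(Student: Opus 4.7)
The plan is to mimic the strategy of Corollary~\ref{all3}: we shall handle $\ell_2$ directly via Proposition~\ref{lem:con} and achieve the other residues by concatenating several applications of the connecting lemma through auxiliary $\zeta$-connectable triples. Since $\ell\ge 3$ is odd, $8\ell+10\equiv 2\pmod 4$, so setting $\ell_2=8\ell+10$ works immediately. For $k\ge 1$ intermediate connectable triples, inserting them between the given end-triples produces $k+1$ connecting paths (each contributing $8\ell+10$ inner vertices by Proposition~\ref{lem:con}) together with $3k$ new inner vertices from the intermediate triples themselves, so the total number of inner vertices of the chained path equals $(k+1)(8\ell+10)+3k=8(k+1)\ell+13k+10$. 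Taking $k=1,2,3$ and reducing modulo $4$ gives
\[
\ell_3=16\ell+23,\qquad \ell_4=24\ell+36,\qquad \ell_1=32\ell+49,
\]
all of which are $\le 50\ell$ for odd $\ell\ge 3$ and congruent to the desired residue modulo~$4$.

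The main quantitative input I still need is that there are $\Omega(n^3)$ disjoint choices for the intermediate connectable triples. I would derive this by double-counting: by Lemma~\ref{NB3}, for each $v\in V$ the link $\overline{H}_v$ contains at least $n^3/4$ (say) $\zeta$-bridges, so the number of pairs $(v,(x,y,z))$ with $(x,y,z)$ a $\zeta$-bridge in $\overline{H}_v$ is at least $n^4/4$. If $N$ denotes the number of $\zeta$-connectable triples, then bounding the above count by $N\cdot n+(n^3-N)\cdot\zeta n$ yields $N\ge (1/4-\zeta)n^3$, which is $\Omega(n^3)$ for $\zeta$ sufficiently small (and we may assume $\zeta<1/8$ by monotonicity).

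With these ingredients in hand, fix $k\in\{0,1,2,3\}$ and let $\theta_\star$ denote the constant furnished by Proposition~\ref{lem:con}. To count $abc$-$xyz$-walks with $\ell_{k+2 \bmod 4}$ (appropriately indexed) inner vertices, I would select intermediate $\zeta$-connectable triples $(u_1,v_1,w_1),\dots,(u_k,v_k,w_k)$ one by one, avoiding at each step the $O(n^{2})$ triples meeting previously chosen vertices, which gives at least $\Omega(n^{3k})$ valid $k$-tuples of intermediate triples. For each such tuple, applying Proposition~\ref{lem:con} to each of the $k+1$ consecutive pairs of triples in the chain yields at least $\theta_\star^{k+1}n^{(k+1)(8\ell+10)}$ choices for the connecting segments. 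Multiplying gives $\Omega(n^{\ell_i})$ $abc$-$xyz$-walks of the correct length, and at most $O(n^{\ell_i-1})$ of them can fail to be paths owing to vertex repetitions, so choosing $\theta>0$ sufficiently small in terms of $\theta_\star,\alpha,\beta,\ell,\zeta$ leaves at least $\theta n^{\ell_i}$ honest paths.

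The main obstacle is the counting step: one must verify that the walks produced by the chaining are almost always paths, which requires a careful accounting that the number of `accidental' coincidences is of lower order than the main term $\Omega(n^{\ell_i})$. Since each inner vertex is chosen independently (up to the constraints imposed by the preceding connecting lemma), the standard observation that fixing a repetition costs a factor of $n$ is enough, just as in the proof of Corollary~\ref{all3}; thus this bookkeeping, while somewhat involved, is routine once the chain of connections is set up as above.
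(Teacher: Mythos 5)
Your proof is correct and follows essentially the same strategy as the paper's: you fix $\ell_2=8\ell+10$, chain applications of Proposition~\ref{lem:con} through $k$ intermediate $\zeta$-connectable triples to realise the remaining residues via $\ell_{i}=(k+1)(8\ell+10)+3k$ for $k=1,2,3$, lower-bound the supply of intermediate triples by a double-count, and discard the $O(n^{\ell_i-1})$ walks that degenerate into non-paths. The paper packages the double-count as Lemma~\ref{NCT} (obtaining $(1/3-2\zeta)n^3$ rather than your $(1/4-\zeta)n^3$ — both are fine) and phrases the $k=2,3$ cases recursively as $\ell_4=\ell_2+\ell_3+3$ and $\ell_1=\ell_3+\ell_3+3$, but unwinding that recursion gives exactly your chains; the two arguments are the same up to bookkeeping.
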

   
The proof will be established in almost the same way as Corollary~\ref{all3}, the main difference
being that, instead of bridges, we utilise connectable
triples to build connecting paths whose 
number of inner vertices is incongruent to $2$ modulo $4$ (cf.\ Proposition~\ref{lem:con}). 
For the proof we first observe that there are many connectable triples in the $4$-uniform 
hypergraph~$H=(V,E)$ under consideration. 

\begin{lemma}\label{NCT} 
	Given Setup~\ref{setup:2335} and $\zeta\in (0, \alpha/4)$, the number 
	of $\zeta$-connectable triples in $H$ is at least $(1/3-2\zeta)|V|^3$.
\end{lemma}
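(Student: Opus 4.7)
The plan is to count ordered pairs $(v,(x,y,z))\in V\times V^3$ for which $(x,y,z)$ is a $\zeta$-bridge in the $3$-uniform link $\overline{H}_v$, then compare the two ways of enumerating them. First I would verify that for each $v\in V$ the link $\overline{H}_v$ fits into Setup~\ref{setup:1746} with the parameter $\mu=\alpha/4$ required by Lemma~\ref{NB3}. This is essentially built into Setup~\ref{setup:2335}: the minimum vertex degree of $\overline{H}_v$ is at least $(5/9+\alpha)n^2/2\ge(5/9+\alpha)(n-1)^2/2$, the family $\{R_{uv}\colon u\ne v\}$ provides the required robust subgraphs, and since $\alpha^3/18<\alpha/4$, the bound on cross-edges given by Proposition~\ref{prop:robust}\,\ref{it:rc2} with $\mu=\alpha^3/18$ is (for large $n$) stronger than what is demanded with $\mu=\alpha/4$. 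Conditions \ref{it:1652a} and \ref{it:1652c} transfer similarly once one absorbs the difference between $n^2$ and $(n-1)^2$ into the lower-order terms.

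Once this is done, Lemma~\ref{NB3} applied to $\overline{H}_v$ (with the hypothesis $\zeta<\alpha/4$) yields strictly more than $(n-1)^3/3$ $\zeta$-bridges in $\overline{H}_v$ for each $v\in V$. Summing over $v$ gives
\[
	\sum_{v\in V}\#\{\text{$\zeta$-bridges in $\overline{H}_v$}\}>\frac{n(n-1)^3}{3}\,.
\]
The same quantity can be rewritten as $\sum_{(x,y,z)\in V^3}|U_{xyz}|$, where $U_{xyz}$ is as in Definition~\ref{def:connectable4}.

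Let $C$ denote the number of $\zeta$-connectable triples in $H$. By definition, triples outside this set contribute less than $\zeta n$ each to the sum, while connectable triples contribute at most $n$. Hence
\[
	\frac{n(n-1)^3}{3}
	<\sum_{(x,y,z)\in V^3}|U_{xyz}|
	\le Cn+(n^3-C)\zeta n\,,
\]
which after rearranging gives $C(1-\zeta)>(n-1)^3/3-\zeta n^3$. For $n$ large enough (this is the only place where the ``sufficiently large'' hypothesis of Setup~\ref{setup:2335} is used), the right-hand side exceeds $(1/3-\zeta)n^3-n^2$, and since $\zeta n^3\gg n^2$, the division by $1-\zeta$ still leaves at least $(1/3-2\zeta)n^3$. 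I do not foresee a serious obstacle: the only mild point of care is the translation between $\overline{H}_v$ having $n-1$ vertices and the statement being phrased in terms of $|V|=n$, which is absorbed into the linear slack $2\zeta n^3$ versus $\zeta n^3$.
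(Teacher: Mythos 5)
Your proposal is correct and follows essentially the same double-counting strategy as the paper: both count the pairs $(v,e)$ with $e$ a $\zeta$-bridge in $\overline{H}_v$, lower-bound the count via Lemma~\ref{NB3} applied to each link hypergraph, and upper-bound it in terms of the number of $\zeta$-connectable triples.

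The only cosmetic difference is in the upper bound. The paper uses the deliberately loose bound $\Pi\le N\cdot n+n^3\cdot\zeta n$ (where $N$ is the number of connectable triples), which immediately yields $N\ge (1/3-2\zeta)n^3$ without any division. You instead use the sharper bound $Cn+(n^3-C)\zeta n$, which forces a division by $1-\zeta$ at the end and makes the final arithmetic marginally more delicate — you need to check that $2\zeta(2/3-\zeta)\ge 1/n$, which does hold since $\zeta$ is a constant and $n$ is large, but it is a small amount of unnecessary bookkeeping. Your preliminary verification that $\overline{H}_v$ and $\{R_{uv}\colon u\ne v\}$ exemplify Setup~\ref{setup:1746} with $\mu=\alpha/4$ (since $\alpha^3/18<\alpha/4$ and $n$ is large) is sound; the paper states this in prose between Setup~\ref{setup:2335} and Definition~\ref{def:connectable4} and then uses it freely.
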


\begin{proof}
	Let $N$ be the number of $\zeta$-connectable triples in $H=(V,E)$. We will estimate the 
	number
	\[
		\Pi=\big|\{(v,e)\colon e \mbox{ is a $\zeta$-bridge in } \overline{H}_v\}\big|\,.
	\]
	in two different ways. First, Lemma~\ref{NB3} tells us that for every vertex $v\in V$ 
	there are at least $(n-1)^3/3$ different $\zeta$-bridges in $\overline{H}_v$, which yields  
	$\Pi\ge n(n-1)^3/3\ge (1/3-\zeta)n^4$ for sufficiently large $n$. Second, we have 
	\[
		\Pi\le N\cdot n+n^3\cdot \zeta n\,,
	\]
	since every $\zeta$-connectable triple $e$ participates in at most $n$ 
	pairs $(v, e)\in \Pi$, while every triple $e$ that fails to be $\zeta$-connectable 
	can be a $\zeta$-bridge in at most $\zeta n$ link hypergraphs. 

	By comparing our estimates on $\Pi$ we obtain 
	\[
		N\ge (1/3-\zeta)n^3-\zeta n^3=(1/3-2\zeta)n^3\,,
	\]
	as promised.
\end{proof}

\begin{proof}[Proof of Corollary~\ref{all4}] 
	By monotonicity we may suppose that $\zeta<\alpha/4<1/12$ and let $\theta_1>0$ be given by 
	Proposition~\ref{lem:con}. We set
	\[
		\l_1=32\l+49\,,\quad
		\l_2=8\l+10\,,\quad
		\l_3=16\l+23\,,\quad
		\l_4=24\l+36\,,\qand
		\theta=\frac{\theta^4_1}{7^3}\,,
	\]
	It follows from $\l\geq 3$ that $\l_2\leq \l_3\leq \l_4\leq \l_1\leq 50\l$
	and Proposition~\ref{lem:con} directly asserts the conclusion of Corollary~\ref{all4} for 
	$i=2$. 	
	
	For $i=3$ we use the following argument. 
	Given disjoint $\zeta$-connectable triples $(a, b, c)$ and $(x, y, z)$ 
	Lemma~\ref{NCT} delivers for sufficiently large $n$ at least $n^3/6$ 
	different $\zeta$-connectable triples $(u, v, w)$ in $H$ 
	with $\{a, b, c, x, y, z\}\cap \{u, v, w\}=\varnothing$ . For each of them, 
	Proposition~\ref{lem:con} provides $\theta_1n^{\ell_2}$ $abc$-$uvw$-paths of the form 
	$abcPuvw$, where $P$ consists of $\ell_2$ vertices. Similarly, there are $\theta_1n^{\ell_2}$
	$uvw$-$xyz$-paths of the form $uvwQxyz$, where $Q$ consists of $\ell_2$ vertices as well.
	Altogether, this yields $\theta_1^2n^{\ell_3}/6$ 
	$abc$-$xyz$-walks of the form $abcPuvwQxyz$
	Since at most $O(n^{\ell_3-1})$ of them fail to be a path due to some overlap between $P$ 
	and $Q$, the corollary follows for $i=3$.
	
	For $i=0$ and $i=1$ we argue similarly, exploiting $\ell_4=\ell_2+\ell_3+3$
	and $\ell_1=\ell_3+\ell_3+3$, respectively.    
\end{proof}
  
\subsection{Bridges in 4-uniform hypergraphs}
We conclude this section with some results that will be helpful in Section~\ref{sec:Abpa}.
The following is a $4$-uniform analogue of Lemma~\ref{F41}.

\begin{lemma}\label{F41analog}
	Given Setup~\ref{setup:2335} and $\zeta>0$, there are at most $\zeta |V|^4$ 
	quadruples $(a, b, c, d)\in V^4$ such that $(a, b, c)$ is a $\zeta$-bridge in $\overline{H}_d$, 
	but $(a, b, c)$ is not $\zeta$-connectable in $H$.
\end{lemma}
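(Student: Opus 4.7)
The plan is to exploit the definition of $\zeta$-connectability directly, turning the statement into a one-line double-counting observation. By Definition~\ref{def:connectable4}, a triple $(a,b,c)\in V^3$ is $\zeta$-connectable in $H$ precisely when the set
\[
	U_{abc}=\{d\in V\colon (a,b,c) \text{ is a } \zeta\text{-bridge in } \overline{H}_d\}
\]
has size at least $\zeta|V|$. Equivalently, if $(a,b,c)$ fails to be $\zeta$-connectable in $H$, then there are fewer than $\zeta|V|$ vertices $d$ for which $(a,b,c)$ is a $\zeta$-bridge in $\overline{H}_d$, i.e., for which the quadruple $(a,b,c,d)$ is counted in the lemma.

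Therefore the proof is to fix the triple $(a,b,c)$ first and sum over $d$: grouping the bad quadruples $(a,b,c,d)$ by their first three coordinates, one gets a contribution of less than $\zeta|V|$ from each non-$\zeta$-connectable triple $(a,b,c)$, and a contribution of $0$ from each $\zeta$-connectable one. Since there are at most $|V|^3$ triples of the former kind, the total is bounded by $|V|^3\cdot\zeta|V|=\zeta|V|^4$, as claimed.

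There is no substantial obstacle here—this is a purely definitional ``unpacking'' lemma, the $4$-uniform analogue of Lemma~\ref{F41} proved by the exact same double-counting scheme (with pairs/links of vertices replaced by triples/links of pairs). Its value lies not in the depth of its proof but in its later use as a tool for controlling exceptional configurations of bridges inside the absorbing construction of Section~\ref{sec:Abpa}.
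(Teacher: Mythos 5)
Your argument is correct and is precisely the paper's proof: fix the triple $(a,b,c)$, observe from Definition~\ref{def:connectable4} that a non-$\zeta$-connectable triple admits fewer than $\zeta|V|$ values of $d$ making it a $\zeta$-bridge in $\overline{H}_d$, and multiply by the at most $|V|^3$ triples. Nothing to add.
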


\begin{proof} 
	It follows from Definition~\ref{def:connectable4} that for every triple 
	$(a, b, c)\in V^3$ that fails to be $\zeta$-connectable in~$H$, there are 
	at most $\zeta |V|$ choices of $d$ such that $(a, b, c)$ is a $\zeta$-bridge in $\overline{H}_d$. 
	Consequently, there are at most $\zeta |V|^4$ quadruples with the properties under consideration. 
\end{proof}
 Similarly to the notion of bridges in $3$-uniform hypergraphs, which was defined 
 by containing connectable pairs (cf.\ Definition~\ref{def:bridge3}), 
 we define $4$-uniform bridges in terms of connectable triples.
 \begin{dfn}\label{def:bridge}
 	Given Setup~\ref{setup:2335} and $\zeta>0$, a quadruple $(a, b, c, d)\in V^4$ 
	is called a~{\it $\zeta$-bridge in $H$} if $abcd\in E$ and $(a, b, c)$ 
	and $(b, c, d)$ are both  $\zeta$-connectable triples in $H$.
\end{dfn}

It will later become important for us that there are plenty of bridges in $H$.
The argument in the proof of the following lemma is very
similar to that in the proof of Lemma~\ref{NB3}.
\begin{lemma}\label{NB4} 
	Given Setup~\ref{setup:2335} and $\zeta>0$ there are at least $(1/9-7\zeta)|V|^4$ $\zeta$-bridges in~$H$. 
\end{lemma}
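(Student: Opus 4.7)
The plan is to mimic the proof of Lemma~\ref{NB3} one level higher. I would start with $A=\{(a,b,c,d)\in V^4\colon abcd\in E\}$ and note that the minimum pair degree condition $\delta_2(H)\ge(5/9+\alpha)n^2/2$, together with $\sum_{\{b,c\}}d_H(bc)=6|E|$, yields $|A|=24|E|\ge(5/9+\alpha-o(1))n^4$. The goal is then to show that, apart from a small proportion of tuples, every element of $A$ is a $\zeta$-bridge in the sense of Definition~\ref{def:bridge}.

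To that end, I would isolate the two ``bad'' subsets
\[
B_1=\{(a,b,c,d)\in A\colon (a,b,c)\text{ is not $\zeta$-connectable in }H\}
\]
and the analogous $B_2$ in which $(b,c,d)$ fails $\zeta$-connectability, and aim to bound each by $(2/9+\alpha/2+3\zeta)n^4$. For $B_1$ I would split further according to whether $(a,b,c)$ is a $\zeta$-bridge in the $3$-uniform link hypergraph $\overline{H}_d$. Since each $\overline{H}_d$ exemplifies Setup~\ref{setup:1746}, as was remarked in \S\ref{subsec:conn_trip_4-unif}, Lemma~\ref{NB3} applied to $\overline{H}_d$ contributes at most $(2/9+\alpha/2+2\zeta)n^3$ offending triples per choice of $d$, hence at most $(2/9+\alpha/2+2\zeta)n^4$ to $|B_1|$ after summing over $d$. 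The remaining $B_1$-quadruples, namely those for which $(a,b,c)$ is a $\zeta$-bridge in $\overline{H}_d$ but fails to be $\zeta$-connectable in $H$, are exactly the ones counted by Lemma~\ref{F41analog}, contributing at most $\zeta n^4$. For $B_2$ the identical argument applies after the reversal $(a,b,c,d)\mapsto(d,c,b,a)$, using that reversing a triple preserves both $\zeta$-connectability in $H$ and the $3$-uniform bridge property in the relevant link.

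Combining the estimates yields
\[
|A\setminus(B_1\cup B_2)|\ge\Big(\tfrac{5}{9}+\alpha-o(1)\Big)n^4-2\Big(\tfrac{2}{9}+\tfrac{\alpha}{2}+3\zeta\Big)n^4=\Big(\tfrac{1}{9}-6\zeta-o(1)\Big)n^4\ge\Big(\tfrac{1}{9}-7\zeta\Big)n^4
\]
for $n$ sufficiently large. There is no genuine obstacle here; the proof is essentially bookkeeping parallel to that of Lemma~\ref{NB3}. The only points deserving explicit verification are that Lemma~\ref{NB3} (which is stated for $\mu=\alpha/4$) really applies to each link $\overline{H}_d$ coming from the stronger Setup~\ref{setup:2335} with $\mu=\alpha^3/18$---which holds thanks to $\alpha^3/18\le\alpha/4$ on the relevant range---and that the reversal symmetry used for $B_2$ genuinely reduces it to the $B_1$-argument.
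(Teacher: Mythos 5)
Your proposal is correct and follows essentially the same route as the paper: the same set $A$ of ordered edge tuples, the same decomposition via Lemma~\ref{NB3} applied to each link $\overline{H}_d$ plus Lemma~\ref{F41analog}, and the same reversal symmetry to handle the second triple. Your side remarks—that $\mu=\alpha^3/18\le\alpha/4$ makes Lemma~\ref{NB3} applicable under Setup~\ref{setup:2335}, and that reversal preserves both connectability and the bridge property—are exactly the right points to check, and both are sound.
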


\begin{proof}
	Let $A=\{(a, b, c, d)\in V^4\colon abcd \in E\}$ be the set of all orderings of the edges of~$H$. 
	Obviously, the minimum pair degree condition imposed on $H$ implies 
	\[
		|A|
		\ge 
		\left(\frac59+\alpha\right)|V|^3(|V|-1)
		\ge 
		\left(\frac59+\alpha-\zeta\right)|V|^4\,.
	\]
	We consider two exceptional subsets of $A$, namely
	\begin{align*}
		P_1&=\{(a, b, c, d)\in A\colon (a, b, c) \mbox{ is not a $\zeta$-bridge in }
			\overline{H}_d \} \\
		\text{and}\quad
		Q_1&=\{(a, b, c, d)\in A\setminus P_1 \colon \text{$(a, b, c)$ is not $\zeta$-connectable in $H$}\}\,.
	\end{align*}
	It follows directly from Lemma~\ref{F41analog}, that 
	\[
		|Q_1|\leq \zeta |V|^4\,.
	\]
	Moreover, by Lemma~\ref{NB3} every $d\in V$
	contributes at most $(2/9+\alpha/2+2\zeta)(|V|-1)^3$ quadruples to $P_1$, 
	which yields the upper bound 
	\[
		|P_1|\leq \Big(\frac{2}{9}+\frac{\alpha}{2}+2\zeta\Big)|V|^4\,.
	\]
	By symmetry we obtain the same bounds for the sets 
	\begin{align*}
		P_2&=\{(a, b, c, d)\in A\colon (b, c, d) \mbox{ is not a $\zeta$-bridge in }
			\overline{H}_a\} \\
		\text{and}\quad
		Q_2&=\{(a, b, c, d)\in A\setminus P_2 \colon \text{$(b, c, d)$ is not $\zeta$-connectable in $H$}\}\,.
	\end{align*}
	Since every quadruple in $A\setminus (P_1\cup Q_1\cup P_2\cup Q_2)$ is 
	a $\zeta$-bridge in $H$, the lemma follows.
\end{proof}
  
\section{Reservoir lemma}\label{sec:reservoir}
The connecting lemma for $4$-uniform hypergraphs from Section~\ref{conn} allows us to connect 
paths that start and end with a connectable triple. However, in the process of building longer paths, we
must not interfere with the paths already constructed. For that we put aside a randomly selected small 
\emph{reservoir} of vertices~$\cR$.
Moreover, due to the divisibility restriction of the absorbing path lemma (see Proposition~\ref{prop:absorbingP}), 
we need to guarantee short connections by paths of lengths in all residue classes modulo four.
The existence of such a reservoir set is given by the following proposition.

\vbox{
\begin{prop}[Reservoir lemma]\label{prop:reservoir}
	Given Setup~\ref{setup:2335} and constants $\zetas$, $\zetass>0$,
	let integers $\ell_1, \ell_2, \ell_3, \ell_4\le 50\ell$ and 
	$\thetas=\theta(\alpha,\beta,\l,\zetas)$ and $\thetass=\theta(\alpha,\beta,\l,\zetass)$
	be provided by Corollary~\ref{all4}.
	Then there exists a subset $\cR\subseteq V$ such that
	\begin{enumerate}[label=\rmlabel]
	\item\label{it:R1} $\thetas^2|V|/2\leq|\cR|\leq \thetas^2|V|$
	\item\label{it:R2} and for all disjoint, $\zetass$-connectable triples $(a, b, c)$, $(x, y, z)$ in $H$ 
		and every $i\in[4]$, there are $\thetass|\cR|^{\l_i}/2$
		$abc$-$xyz$-paths with $\ell_i$ inner vertices, 
		which all belong to $\cR$.
	\end{enumerate}
\end{prop}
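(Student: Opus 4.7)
The plan is to apply the standard probabilistic method: sample $\cR\subseteq V$ by including each vertex independently with probability $p=\tfrac34\thetas^2$, and then verify both~\ref{it:R1} and~\ref{it:R2} by concentration followed by a union bound over all relevant pairs of triples.

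Clause~\ref{it:R1} is immediate from a Chernoff bound: with probability $1-\exp(-\Omega(|V|))$ one has $|\cR|\in[(1-\delta)p|V|,(1+\delta)p|V|]$ for any fixed $\delta>0$, and for $\delta<\tfrac13$ this interval lies inside $[\thetas^2|V|/2,\thetas^2|V|]$. For~\ref{it:R2}, fix disjoint $\zetass$-connectable triples $(a,b,c)$, $(x,y,z)$ and an index $i\in[4]$, and let $N_i$ denote the number of ordered sequences $\seq{p}=(p_1,\dots,p_{\l_i})\in\cR^{\l_i}$ for which $abc\,\seq{p}\,xyz$ is an $abc$-$xyz$-path in $H$. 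Corollary~\ref{all4} furnishes at least $\thetass|V|^{\l_i}$ such sequences without the restriction $\seq{p}\in\cR^{\l_i}$, so $\EE[N_i]\ge p^{\l_i}\thetass|V|^{\l_i}$ by linearity.

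Concentration of $N_i$ would follow from the Azuma--Hoeffding inequality applied to the vertex-exposure martingale. Toggling membership of a single vertex changes $N_i$ by at most $\l_i|V|^{\l_i-1}$ (the number of paths of interest using that vertex in any of its $\l_i$ interior positions), so the squared increments sum to at most $\l_i^2|V|^{2\l_i-1}$ and one obtains $\PP[N_i<\tfrac34\EE[N_i]]\le\exp(-\Omega(|V|))$, with implicit constants depending only on $\alpha$, $\beta$, $\l$, and $\zetass$. Since there are at most $4|V|^6$ choices of $(i,(a,b,c),(x,y,z))$, a union bound over all of them together with the size event leaves a valid realization $\cR$. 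Choosing $\delta=1/(200\l)$ gives $(1+\delta)^{\l_i}\le e^{1/4}<\tfrac43$ for every $i\in[4]$ (since $\l_i\le 50\l$), hence
\[
	\frac{\thetass|\cR|^{\l_i}}{2}
	\le \frac{\thetass(1+\delta)^{\l_i}(p|V|)^{\l_i}}{2}
	\le \frac{2}{3}\,p^{\l_i}\thetass|V|^{\l_i}
	\le \frac34\EE[N_i]
	\le N_i\,,
\]
which establishes~\ref{it:R2}.

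The one quantitative subtlety is that $\delta$ must be small compared to $1/\l$ so that the factor $(1+\delta)^{\l_i}$ stays bounded by $4/3$; this forces the concentration of $|\cR|$ around its mean to be sharper than a trivial constant-factor estimate. Fortunately Chernoff at scale $\delta=\Theta(1/\l)$ still gives exponentially small failure probability, and the Azuma bound for $N_i$ likewise beats the $|V|^6$-sized union bound by a wide margin, so no genuine obstacle arises --- essentially all of the work has already been done inside Corollary~\ref{all4}.
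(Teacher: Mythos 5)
The proposal is correct and follows essentially the same approach as the paper: sample $\cR$ binomially with probability $p=\tfrac34\thetas^2$, establish the size bound by concentration (you use Chernoff where the paper uses Chebyshev, an immaterial difference), and establish clause~\ref{it:R2} by applying Azuma--Hoeffding to the path-counting random variable with Lipschitz constant $\ell_i|V|^{\ell_i-1}$, followed by a union bound over the polynomially many choices of $(i,(a,b,c),(x,y,z))$. Your bookkeeping with $\delta=1/(200\ell)$ plays exactly the role of the paper's constant $C=(4/3)^{1/(50\ell)}$: both ensure that the upper deviation of $|\cR|$ raised to the power $\ell_i\le 50\ell$ stays below $4/3$, so that $\tfrac12\thetass|\cR|^{\ell_i}$ is dominated by the lower tail threshold for the path count.
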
}

We often refer to the set~$\cR$ given by Proposition~\ref{prop:reservoir} as 
the \emph{reservoir set}. 

\begin{proof}
	The existence of such a reservoir set $\cR$ is established by a
	standard probabilistic argument.
	For that we set 
	\[
		p=\frac34\thetas^2
		\qqand
		C=\Big(\frac{4}{3}\Big)^{\frac{1}{50\l}}
	\]
	and we consider a random subset $\cR\subseteq V$ with elements included
	independently with probability $p$.
	Observe that~$|\cR|$ is binomially distributed with expectation $p |V|$ and 
	Chebyshev's inequality implies that a.a.s.\ 
	\begin{equation}\label{eq:sizeR}
		\frac{p}{C}|V|
		\le
		|\cR|
		\le 
		C p |V|\,.
	\end{equation}
	In particular, our choice of $C$ shows that a.a.s.\ the set $\cR$ satisfies 
	part~\ref{it:R1} of Proposition~\ref{prop:reservoir}. 
	
	For part~\ref{it:R2} we recall that for every pair of disjoint, $\zetass$-connectable triples 
	$(a,b,c)$, $(x,y,z)\in V^3$, Corollary~\ref{all4} guarantees for every $i\in[4]$ at least 
	$\thetass |V|^{\l_i}$ $abc$-$xyz$-paths with~$\l_i$ inner vertices.	
	Let
	$X=X(i,(a,b,c),(x,y,z))$ be the random variable counting the number of such $abc$-$xyz$-paths
	with all $\l_i$ inner vertices in $\cR$.
	Clearly,
	\begin{equation}\label{eq:EEX}
		\EE X 
		\geq
		p^{\l_i}\cdot\thetass|V|^{\l_i}\,.
	\end{equation}
	Since including or not including a particular vertex into $\cR$ 
	affects the random variable~$X$ by no more than $\l_i |V|^{\l_i-1}$, 
	the Azuma--Hoeffding inequality (see, e.g.,~\cite{JLR00}*{Corollary~2.27})
	asserts
	\begin{align}
		\PP\big(X\leq \tfrac{2}{3}\thetass(p |V|)^{\l_i}\big)
		&\overset{\eqref{eq:EEX}}{\leq}
		\PP\big(X\leq \tfrac{2}{3}\EE X\big)\nonumber\\
		&\overset{\phantom{\eqref{eq:EEX}}}{\leq}
		\exp\left(-\frac{(\EE X)^2}{18\cdot |V|\cdot (\l_i|V|^{\l_i-1})^2}\right)
		=\exp\big(-\Omega(|V|)\big)\,.\label{eq:RAzuma}
\end{align}
In view of~\eqref{eq:sizeR} and $\l_i\leq 50\l$ our choice of $C$ implies that a.a.s.\
\begin{equation}\label{Rless}
	\frac{2}{3}\thetass (p|V|)^{\l_i}
	\ge
	\frac12\thetass|\cR|^{\l_i}.
\end{equation}
	Since there are at most $4|V|^6$ choices for the triples $(a,b,c)$, $(x,y,z)$, and for $i$, 
	the union bound combined with~\eqref{eq:RAzuma} and~\eqref{Rless}
	shows that a.a.s.\ the set $\cR$ satisfies part~\ref{it:R2} of Proposition~\ref{prop:reservoir}.
	Consequently,
	a reservoir set $\cR$ with all required properties exists.
\end{proof}

In the proof of Theorem~\ref{thm:main} in Section~\ref{sec:main-pf} 
we will repeatedly connect connectable triples
through the reservoir~$\cR$ provided by Proposition~\ref{prop:reservoir}. 
Whenever such a connection is made some of the vertices of the reservoir are used and 
the part of the reservoir that may still be used for further connections shrinks. Although $\Omega(|V|)$ such
connections will be needed, we shall be able to keep an appropriate 
version of property~\ref{it:R2} of the reservoir intact throughout this process.
We prepare for this situation by the following corollary.

\begin{cor}\label{lem:use-reservoir}
	Given Setup~\ref{setup:2335} and $\zetas$, $\zetass>0$, let 
	integers $\ell_1, \ell_2, \ell_3, \ell_4\le 50\ell$ and 
	$\thetas=\theta(\alpha,\beta,\l,\zetas)$, $\thetass=\theta(\alpha,\beta,\l,\zetass)$
	be provided by Corollary~\ref{all4}.
	Moreover, let $\cR\subseteq V$ be a reservoir set provided by Proposition~\ref{prop:reservoir}.
	Then for every subset $\cR'\subseteq \cR$ of size at most $\frac{\thetas^2\thetass}{400\l}|V|$ the following holds.
	
	For all disjoint, $\zetass$-connectable triples $(a, b, c)$, $(x, y, z)$ in $H$ 
	and every $i\in[4]$, there is some $abc$-$xyz$-path with $\ell_i$ inner vertices, 
	which all belong to $\cR\setminus\cR'$.	
\end{cor}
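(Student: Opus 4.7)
The plan is to argue by a direct counting comparison: Proposition~\ref{prop:reservoir}\,\ref{it:R2} supplies many $abc$-$xyz$-paths with inner vertices in $\cR$, and only a small fraction of them can use a vertex from $\cR'$, so some path must avoid $\cR'$ altogether.

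More precisely, fix $i\in[4]$ and disjoint $\zetass$-connectable triples $(a,b,c)$, $(x,y,z)$. By part~\ref{it:R2} of Proposition~\ref{prop:reservoir} there are at least $\thetass|\cR|^{\l_i}/2$ such paths with all $\l_i$ inner vertices in $\cR$. A path with $\l_i$ inner vertices that contains at least one vertex from $\cR'$ can be specified by choosing one of its $\l_i$ inner positions, a vertex $v\in\cR'$ to place there, and at most $|\cR|^{\l_i-1}$ completions of the remaining inner positions in~$\cR$. Hence the number of bad paths is at most
\[
	\l_i\,|\cR'|\,|\cR|^{\l_i-1}.
\]

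Using $\l_i\le 50\l$, the hypothesis $|\cR'|\le \frac{\thetas^2\thetass}{400\l}|V|$, and $\thetas^2|V|\le 2|\cR|$ (from \ref{it:R1}), this bound is at most
\[
	50\l\cdot \frac{\thetas^2\thetass}{400\l}|V|\cdot |\cR|^{\l_i-1}
	=\frac{\thetas^2|V|\,\thetass}{8}\,|\cR|^{\l_i-1}
	\le \frac{2|\cR|\,\thetass}{8}|\cR|^{\l_i-1}
	=\frac{\thetass|\cR|^{\l_i}}{4},
\]
which is strictly less than $\thetass|\cR|^{\l_i}/2$. Therefore at least one of the paths guaranteed by Proposition~\ref{prop:reservoir}\,\ref{it:R2} avoids $\cR'$ entirely, proving the corollary.

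No step is really an obstacle here: the argument is a one-line pigeonhole, and the only mild care needed is to verify that the absolute constant $400\l$ in the bound on $|\cR'|$ is chosen large enough so that the product $\l_i|\cR'||\cR|^{\l_i-1}$ is dominated by $\thetass|\cR|^{\l_i}/2$ using only the two-sided estimate on $|\cR|$ from~\ref{it:R1} and the uniform bound $\l_i\le 50\l$; the computation above confirms that the factor $400$ suffices with room to spare.
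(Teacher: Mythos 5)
Your proof is correct and follows essentially the same route as the paper: count all $abc$-$xyz$-paths with $\ell_i$ inner vertices in $\cR$ via Proposition~\ref{prop:reservoir}\,\ref{it:R2}, bound the number that touch $\cR'$ by $\ell_i|\cR'||\cR|^{\ell_i-1}$, and use $\ell_i\le 50\ell$ together with the lower bound on $|\cR|$ from \ref{it:R1} to see this is at most $\thetass|\cR|^{\ell_i}/4$, leaving a positive surplus. The only cosmetic difference is that the paper first rewrites the bound on $|\cR'|$ as $\frac{\thetass}{200\ell}|\cR|$ before estimating, whereas you keep $|V|$ and substitute $\thetas^2|V|\le 2|\cR|$ at the end; the inequalities are identical.
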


\begin{proof}
	It follows from the lower bound in Proposition~\ref{prop:reservoir}\,\ref{it:R1} and the bound on $|\cR'|$ 
	that 
	\[
		|\cR'|\leq \frac{\thetass}{200\l}|\cR|\,.
	\]
	Moreover, every  vertex in $\cR'$ is an inner vertex in at most $\l_i|\cR|^{\l_i-1}$ different
	$abc$-$xyz$-paths in $H$ with all $\l_i$ inner vertices  belonging to $\cR$. Consequently,
	it follows from Proposition~\ref{prop:reservoir}\,\ref{it:R2} and $\l_i\leq 50\l$ that there are at least 
	\[
		\frac{\thetass}{2}|\cR|^{\l_i} - |\cR'|\cdot\l_i|\cR|^{\l_i-1} 
		\geq 
		\frac{\thetass}{4}|\cR|^{\l_i}
	\]
	such paths with all inner vertices from $\cR\setminus\cR'$.
\end{proof}

\section{Absorbing path lemma}
\label{sec:Abpa}

\subsection{Outline and main ideas}
\label{sec:abs_intro}
In this section we establish the existence of an \emph{absorbing path~$P_A$}, which at the end of the proof of Theorem~\ref{thm:main}
will allow us to `absorb' an arbitrary (but not too large) set $Z$ of left-over vertices 
with a size divisible by four.

\begin{prop}[Absorbing path lemma]\label{prop:absorbingP} 
	Given Setup~\ref{setup:2335}, there is some $\zeta_0=\zeta_0(\alpha)>0$ such that 
	for every $\zetas\in(0,\zeta_0)$ and for	
	$\thetas=\theta(\alpha,\beta,\l,\zetas)$
	provided by Proposition~\ref{lem:con} the following holds. 
	For every set 
	$\cR\subseteq V$ of size at most $\thetas^2|V|$,
	there exists a path~$P_A\subseteq H-\cR$ satisfying
	\begin{enumerate}[label=\rmlabel]
	\item\label{it:PA1} $ |V(P_A)|\le \thetas |V|$,
	\item\label{it:PA2} the end-triples of $P_A$ are $\zetas$-connectable, 
	\item\label{it:PA3} and for every subset $Z\subseteq V\setminus V(P_A)$
		with $|Z|\le 2\thetas^2n$ and $|Z|\equiv0 \pmod 4$, there exists a path $Q\subseteq H$ with the same end-triples as $P_A$
		and $V(Q)=V(P_A)\cup Z$.
	\end{enumerate}
\end{prop}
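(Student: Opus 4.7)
The plan is the standard absorption method, tailored to $4$-tuples to match the divisibility requirement $|Z|\equiv 0\pmod 4$ in clause~\ref{it:PA3}. For an ordered quadruple $(v_1,v_2,v_3,v_4)\in V^4$, an \emph{absorber} will be a tight path $A\subseteq H$ of bounded length on a vertex set disjoint from $\{v_1,v_2,v_3,v_4\}$, together with a second tight path $A^+$ on $V(A)\cup\{v_1,v_2,v_3,v_4\}$ having the same end-triples as $A$. Granted a family $\cF$ of pairwise vertex-disjoint absorbers that is rich in the sense that every $4$-tuple has many absorbers in $\cF$, I stitch the members of $\cF$ into one tight path $P_A$ by invoking the connecting lemma, and then a left-over set $Z$ is absorbed by partitioning it into $4$-tuples, assigning to each a dedicated absorber from $\cF$, and performing the local replacement $A\to A^+$ inside $P_A$.

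The first task is to count: using $\delta_2(H)\ge(\tfrac59+\alpha)n^2/2$ one shows that for a suitable absorber shape on a constant number $L$ of vertices, every $4$-tuple possesses $\Omega(n^{L})$ absorbers. As anticipated in the introduction and following the strategy of~\cite{PR}, the existence of sufficiently many absorbers valid for \emph{every} $4$-tuple reduces to a degenerate Tur\'an-type theorem of Erd\H os~\cite{E64} applied to the bipartite incidence hypergraph between prospective absorbers and $4$-tuples. This reduction naturally produces absorbers that accommodate four vertices at a time, which is precisely where the divisibility $|Z|\equiv 0\pmod 4$ originates.

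Next, I construct $\cF$ by a probabilistic selection, including each candidate absorber independently with a small probability. A union bound together with a standard concentration argument (Chernoff or Azuma--Hoeffding) shows that with positive probability the outcome $\cF$ satisfies $|\cF|=\Theta(n)$, the absorbers in $\cF$ are pairwise vertex-disjoint and disjoint from $\cR$, and every $4$-tuple in $V^4$ has at least $4\thetas^2 n$ absorbers in $\cF$. The last property guarantees that a greedy assignment of absorbers to the at most $|Z|/4\le\thetas^2 n/2$ groups of a left-over $Z$ succeeds. To make $\cF$ suitable for later connection, I additionally insist that each absorber in $\cF$ has end-triples that are $\zetas$-connectable in the $4$-uniform sense of Definition~\ref{def:connectable4}; the feasibility of this extra requirement rests on Lemma~\ref{L35}, applied inside the $3$-uniform links of vertices, which guarantees that many connectable boundary configurations are available to survive the random selection.

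Finally, with $\cF$ in hand I appeal to Corollary~\ref{all4} to link its members one after another through short tight paths using only vertices from $V\setminus\cR$ not previously engaged; since $|V(\cF)|+|\cR|=O(n)$ and each connection needs $O(\ell)$ new vertices, these can be produced greedily. The resulting path $P_A$ satisfies $|V(P_A)|\le\thetas n$, avoids $\cR$, and has $\zetas$-connectable end-triples, establishing~\ref{it:PA1} and~\ref{it:PA2}. Given $Z$ as in~\ref{it:PA3}, the absorption proceeds by the greedy assignment described above, followed by the local swaps $A\to A^+$. The principal obstacle is the guaranteed existence of absorbers for \emph{every} $4$-tuple: the lower-bound constructions of Han and Zhao show that certain $4$-tuples cannot be inserted into a tight path by naive means, and it is precisely this obstruction that forces the Erd\H os-type supersaturation argument from~\cite{E64,PR} and, with it, the divisibility restriction on $|Z|$.
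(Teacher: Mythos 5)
Your proposal captures the general absorption framework (probabilistic selection of vertex-disjoint absorbers, stitching them together via the connecting lemma, greedy absorption of $Z$ in groups of four, Erd\H os's theorem as the source of absorbers) but misses the decisive structural idea that makes absorbers for \emph{arbitrary} $4$-tuples possible, and your description of an absorber is inaccurate in a way that hides this gap. You posit a single tight path $A$ on a constant number of vertices together with a second tight path $A^+$ on $V(A)\cup\{v_1,\dots,v_4\}$ with the same end-triples. But the Erd\H os-type argument (finding a $K^{(4)}_{3,3,3,2}$ among $\zeta$-bridges) only produces such a two-path configuration for $4$-tuples that can be placed as the \emph{middle layer} of that complete $4$-partite hypergraph --- not for an arbitrary $(a_1,\dots,a_4)\in V^4$. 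If all one had were the $K^{(4)}_{3,3,3,2}$ mechanism, one would only be able to absorb ``nice'' $4$-tuples, and the key difficulty of the lemma (every $4$-tuple must have $\Omega(n^{35})$ absorbers) would remain unresolved.

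The paper's absorber is a $35$-tuple comprising \emph{five} pairwise disjoint $7$-vertex paths, not one path, and it operates in two stages. For each $i\in[4]$ one first finds a vertex $x_i$ and a $6$-vertex $3$-uniform path $b_{i1}\dots b_{i6}$ in the joint link $H_{a_i}\cap H_{x_i}$ with $\zetas$-connectable end-triples (Lemma~\ref{b6}, using Erd\H os on a $3$-partite $3$-uniform auxiliary hypergraph of connectable triples); then, separately, one finds an $11$-tuple $(\su,\sx,\sw)$ from a $K^{(4)}_{3,3,3,2}$ of $\zeta$-bridges (Lemma~\ref{manyelves}, a second application of Erd\H os). The five $7$-vertex paths $b_{i1}b_{i2}b_{i3}x_ib_{i4}b_{i5}b_{i6}$ ($i\in[4]$) and $u_1\dots u_4w_1w_2w_3$ are distributed as non-contiguous segments of $P_A$; absorption replaces $x_i$ by $a_i$ in each $b$-path (a ``swap'') and simultaneously inserts $x_1,\dots,x_4$ into the $u$--$w$ path (the ``insert''). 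The swap step is what buys universality over all $4$-tuples: it relocates the constraint from the arbitrary $a_i$'s to the freely chosen $x_i$'s, and it requires the auxiliary fact that $H_{a}\cap H_{x}$ has positive density for almost all $x$ (which is where Lemma~\ref{L35} enters, rather than as a source of connectable boundary configurations). Your formulation skips both the two-stage decomposition and the fact that the absorber is a union of five paths, and your description of ``the bipartite incidence hypergraph between prospective absorbers and $4$-tuples'' does not correspond to either of the two concrete auxiliary hypergraphs to which Erd\H os's theorem is applied. As written, the proposal does not explain how an absorber for an \emph{arbitrary} $4$-tuple is produced, which is the heart of the lemma.
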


The absorbing path $P_A$ will be built by connecting many so-called \emph{absorbers} (see Definition~\ref{absorber}).
Similarly as in~\cite{old}, the absorbers used here consist of two parts. Roughly speaking, the first part allows us to
``swap'' any given vertex $a$ with a different vertex~$x$, which then can be absorbed by the second part 
of the absorber. In other words, we can move from an arbitrary vertex $a$, which we may need to absorb, to another vertex~$x$ that enjoys better properties. For the first part this can be easily achieved if $a$ and $x$ share a $3$-uniform path with six vertices 
in their joint link $H_a\cap H_x$ (see Figure~\ref{fig:Abs-part1}). Note that our degree assumption on $H$ implies that
the $3$-uniform link of every vertex has density at least $5/9>1/2$ and, hence, the joint link of any two vertices 
has positive density and the existence of the $6$-vertex paths follows from~\cite{E64}.

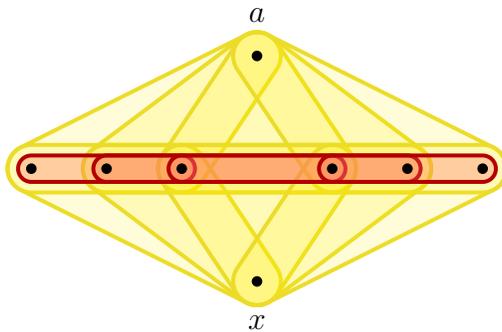
\begin{figure}[ht]
	\begin{tikzpicture}
		
		\def\s{1};
		
		\foreach \i in {1,2, 3}{
			\coordinate (b\i) at ($(\i*\s, 0)-(4*\s,0)$);
			}
		
		\foreach \i in {4, 5, 6}{
			\coordinate (b\i) at ($(\i*\s,0)-(3*\s,0)$);
			}
			
		\coordinate (a) at (0,1.5*\s);
		\coordinate (x) at (0,-1.5*\s);
		
		\def\w{9pt};
				
	\redge{(a)}{(b3)}{(b2)}{(b1)}{\w}{1.5pt}{yellow!90!black}{yellow,opacity=0.15};
	\redge{(a)}{(b4)}{(b3)}{(b2)}{\w}{1.5pt}{yellow!90!black}{yellow,opacity=0.15};
	\redge{(a)}{(b5)}{(b4)}{(b3)}{\w}{1.5pt}{yellow!90!black}{yellow,opacity=0.15};
	\redge{(a)}{(b6)}{(b5)}{(b4)}{\w}{1.5pt}{yellow!90!black}{yellow,opacity=0.15};
	
	\redge{(x)}{(b1)}{(b2)}{(b3)}{\w}{1.5pt}{yellow!90!black}{yellow,opacity=0.15};
	\redge{(x)}{(b2)}{(b3)}{(b4)}{\w}{1.5pt}{yellow!90!black}{yellow,opacity=0.15};
	\redge{(x)}{(b3)}{(b4)}{(b5)}{\w}{1.5pt}{yellow!90!black}{yellow,opacity=0.15};
	\redge{(x)}{(b4)}{(b5)}{(b6)}{\w}{1.5pt}{yellow!90!black}{yellow,opacity=0.15};
	
	\qedge{(b1)}{(b2)}{(b3)}{5pt}{1.5pt}{red!70!black}{red!60!white,opacity=0.3};
	\qedge{(b2)}{(b3)}{(b4)}{5pt}{1.5pt}{red!70!black}{red!60!white,opacity=0.3};
	\qedge{(b3)}{(b4)}{(b5)}{5pt}{1.5pt}{red!70!black}{red!60!white,opacity=0.3};
	\qedge{(b4)}{(b5)}{(b6)}{5pt}{1.5pt}{red!70!black}{red!60!white,opacity=0.3};
				
		\foreach \i in {1,...,6}{
			\fill (b\i) circle (2pt);
			}
		\fill (a) circle (2pt);
		\fill (x) circle (2pt);
		
	\node at ($(a)	+(0,.55)$) {$a$};
	\node at ($(x)	-(0,.55)$) {$x$};
	
	\end{tikzpicture}
	\caption{Both $a$ and $x$ form a $4$-uniform path together with the $3$-uniform path in $H_a\cap H_x$.}
	\label{fig:Abs-part1}
\end{figure}

Having replaced $x$ with $a$ we need to ensure that $x$ itself can be absorbed. For that, in the context of $4$-uniform hypergraphs, 
one usually showed that many vertices~$x$ have the property that their links contain a $3$-uniform path on six vertices 
with the additional property that its vertices span a $4$-uniform path in $H$. In particular, these six vertices form 
a path on its own and can absorb $x$ in the middle, building a $7$-vertex path with the 
same end-triples
(see e.g.~\cite{old}, where this strategy was implemented for $3$-uniform hypergraphs). 

While working on a related problem, the first two authors~\cite{PR} suggested 
a different approach for the second part of the absorber. For that we note that every complete $4$-partite $4$-uniform 
hypergraph~$K^{(4)}_{s,s,s,s}$ contains a path on $4s$ vertices.  However, any four consecutive vertices in that path 
are crossing in the $K^{(4)}_{s,s,s,s}$ and removing them gives rise to a copy of $K^{(4)}_{s-1,s-1,s-1,s-1}$, which again contains a 
spanning path on the remaining $4(s-1)$ vertices. Moreover, if $s\geq 3$ then these paths have the same end-triples.
Actually it suffices already to start with a $K^{(4)}_{3,3,3,2}$ and we will follow that route. Again 
the existence of~$K^{(4)}_{3,3,3,2}$'s in $4$-uniform hypergraphs of positive density follows from~\cite{E64}. However, 
due to the $4$-partiteness, with this absorption mechanism we can only absorb four tuples of vertices $(x_1,\dots,x_4)$, 
which in turn implies that we have to start with four vertices $a_1,\dots,a_4$ at the beginning. This is the reason for the divisibility condition 
on $|Z|$ in part~\ref{it:PA3} of Proposition~\ref{prop:absorbingP}. 

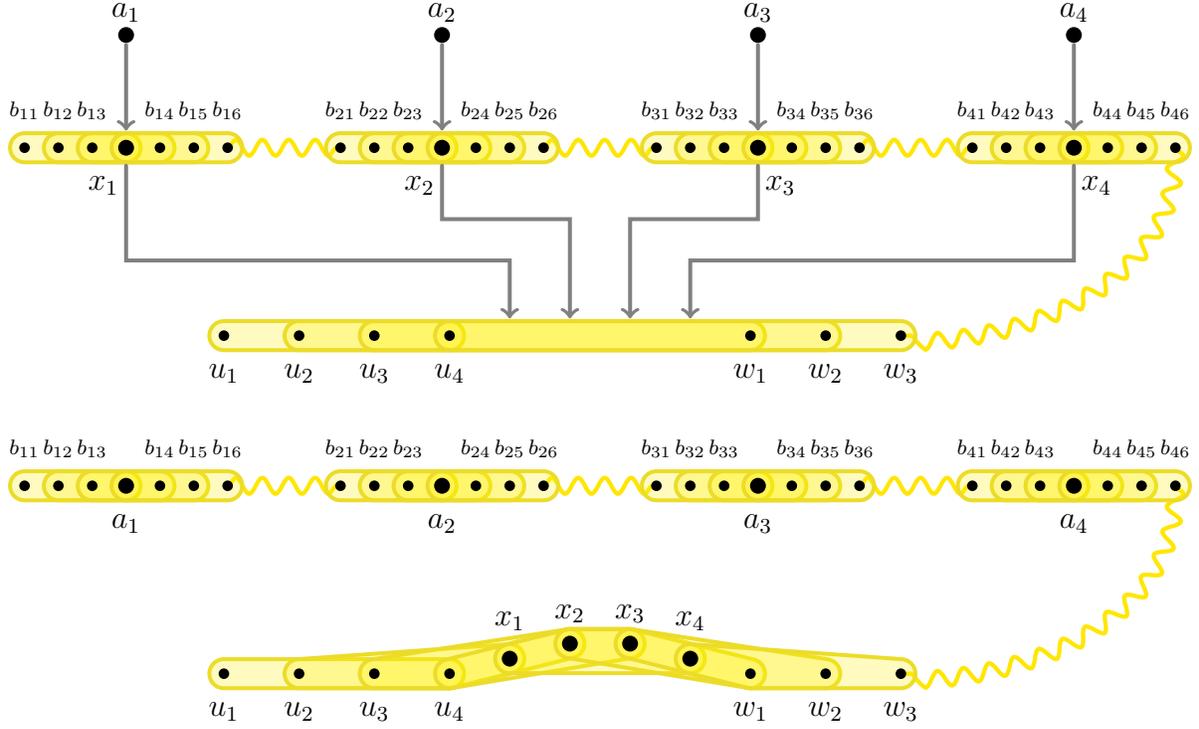
\begin{figure}[t]
	\begin{tikzpicture}[scale=1]
	
	\def \dd{2.1};
	\def \md{.45};
	\def \co{yellow!90!black};

	\coordinate (a1) at (-3*\dd, 0);
	\coordinate (a2) at (-\dd,0);
	\coordinate (a3) at (\dd,0);
	\coordinate (a4) at (3*\dd,0);
	
	\foreach \i in {1,2,3,4}{
		\coordinate (x\i) at ($(a\i)-(0,1.5)$);
		\foreach \j in {1,...,7}{
			\coordinate (b\i\j) at ($(x\i)+(\j*\md,0)-(4*\md,0)$);
		}}
	
	\foreach \i in {1,2,3,4}{
		\foreach \j in {1,2,3}{
			\node at (b\i\j) at ($(b\i\j)+(0,.5)$){\tiny $b_{\i\j}$};}
		\node at (b\i5) at ($(b\i5)+(0,.5)$){\tiny $b_{\i4}$};
		\node at (b\i6) at ($(b\i6)+(0,.5)$){\tiny $b_{\i5}$};
		\node at (b\i7) at ($(b\i7)+(0,.5)$){\tiny $b_{\i6}$};
	}
	
	\coordinate (y1) at (-5,-4);
	\coordinate (y2) at (-4,-4);
	\coordinate (y3) at (-3,-4);
	\coordinate (y4) at (-2,-4);
	\coordinate (z1) at (2,-4);
	\coordinate (z2) at (3,-4);
	\coordinate (z3) at (4,-4);
	
	\coordinate (v1) at ($(y4)!.2!(z1)$);
	\coordinate (v2) at ($(y4)!.4!(z1)$);
	\coordinate (v3) at ($(y4)!.6!(z1)$);
	\coordinate (v4) at ($(y4)!.8!(z1)$);
	
	\foreach \i in {1,2,3,4}{
		\redge{(b\i1)}{(b\i2)}{(b\i3)}{(x\i)}{5.5pt}{1.5pt}{\co}{yellow,opacity=0.25}
		\redge{(b\i2)}{(b\i3)}{(x\i)}{(b\i5)}{5.5pt}{1.5pt}{\co}{yellow,opacity=0.25}
		\redge{(b\i3)}{(x\i)}{(b\i5)}{(b\i6)}{5.5pt}{1.5pt}{\co}{yellow,opacity=0.25}
		\redge{(x\i)}{(b\i5)}{(b\i6)}{(b\i7)}{5.5pt}{1.5pt}{\co}{yellow,opacity=0.25}
		}
	
	\redge{(y1)}{(y2)}{(y3)}{(y4)}{5.5pt}{1.5pt}{\co}{yellow,opacity=0.25}
	\redge{(y2)}{(y3)}{(y4)}{(z1)}{5.5pt}{1.5pt}{\co}{yellow,opacity=0.25}
	\redge{(y3)}{(y4)}{(z1)}{(z2)}{5.5pt}{1.5pt}{\co}{yellow,opacity=0.25}
	\redge{(y4)}{(z1)}{(z2)}{(z3)}{5.5pt}{1.5pt}{\co}{yellow,opacity=0.25}

	\begin{pgfonlayer}{front}
	
		\foreach \i in {1,2,3} {
		\fill (z\i) circle (2pt);
		\node at ($(z\i) -(0,.5)$) {$w_{\i}$};}
	
		\foreach \i in {1,2,3,4}{
			\fill (y\i) circle (2pt);
			\fill (a\i) circle (3pt);
			\fill (x\i) circle (3pt);
			\foreach \j in {1,...,7}{
				\fill (b\i\j) circle (2pt);}
			\draw[black!50!white, line width=1.5pt, line cap=round, shorten <=3.8pt,shorten >=6.5pt,->] (a\i) -- (x\i);
		
		\node at ($(a\i)+(0,.3)$) {$a_{\i}$};
		\node at ($(y\i) -(0,.5)$) {$u_{\i}$};
	}
		\node at ($(x1) -(.3,.5)$) {$x_1$};
		\node at ($(x2) -(.3,.5)$) {$x_2$};
		\node at ($(x3) -(-.3,.5)$) {$x_3$};
		\node at ($(x4) -(-.3,.5)$) {$x_4$};

	\end{pgfonlayer}

	\draw[black!50!white, line width=1.5pt, line cap=round, shorten <=6.8pt,shorten >=6.5pt,->] (x1) -- +(0,-1.5)-- ($(v1)+(0,1)$) --(v1);
	\draw[black!50!white, line width=1.5pt, line cap=round, shorten <=6.8pt,shorten >=6.5pt,->] (x2) -- +(0,-.95)-- ($(v2)+(0,1.55)$) --(v2);
	\draw[black!50!white, line width=1.5pt, line cap=round, shorten <=6.8pt,shorten >=6.5pt,->] (x3) -- +(0,-.95)-- ($(v3)+(0,1.55)$) --(v3);
	\draw[black!50!white, line width=1.5pt, line cap=round, shorten <=6.8pt,shorten >=6.5pt,->] (x4) -- +(0,-1.5)-- ($(v4)+(0,1)$) --(v4);

	\begin{pgfonlayer}{background}
	\draw[ultra thick, decorate, decoration={snake,segment length=10,amplitude=3}, yellow!95!red]
			(b17)--(b21);
	\draw[ultra thick, decorate, decoration={snake,segment length=10,amplitude=3}, yellow!95!red]
			(b27)--(b31);
	\draw[ultra thick, decorate, decoration={snake,segment length=10,amplitude=3}, yellow!95!red]
			(b37)--(b41);
	
	\draw[ultra thick, decorate, decoration={snake,segment length=10,amplitude=3}, yellow!95!red]
			(b47) to[out=270,in=0] (z3);
	\end{pgfonlayer}

	\end{tikzpicture}

\bigskip 

	\begin{tikzpicture}[scale=1]
	
	\def \dd{2.1};
	\def \md{.45};
	\def \co{yellow!90!black};

	\coordinate (a1) at (-3*\dd, 0);
	\coordinate (a2) at (-\dd,0);
	\coordinate (a3) at (\dd,0);
	\coordinate (a4) at (3*\dd,0);
	
	\foreach \i in {1,2,3,4}{
		\coordinate (x\i) at ($(a\i)-(0,1.5)$);
		\foreach \j in {1,...,7}{
			\coordinate (b\i\j) at ($(x\i)+(\j*\md,0)-(4*\md,0)$);
		}}
	
	\foreach \i in {1,2,3,4}{
		\foreach \j in {1,2,3}{
			\node at (b\i\j) at ($(b\i\j)+(0,.5)$){\tiny $b_{\i\j}$};}
		\node at (b\i5) at ($(b\i5)+(0,.5)$){\tiny $b_{\i4}$};
		\node at (b\i6) at ($(b\i6)+(0,.5)$){\tiny $b_{\i5}$};
		\node at (b\i7) at ($(b\i7)+(0,.5)$){\tiny $b_{\i6}$};
		}
	
	\coordinate (y1) at (-5,-4);
	\coordinate (y2) at (-4,-4);
	\coordinate (y3) at (-3,-4);
	\coordinate (y4) at (-2,-4);
	\coordinate (z1) at (2,-4);
	\coordinate (z2) at (3,-4);
	\coordinate (z3) at (4,-4);
	
	\coordinate (v1) at ($(y4)!.2!(z1)+(0,.2)$);
	\coordinate (v2) at ($(y4)!.4!(z1)+(0,.4)$);
	\coordinate (v3) at ($(y4)!.6!(z1)+(0,.4)$);
	\coordinate (v4) at ($(y4)!.8!(z1)+(0,.2)$);
	
	\foreach \i in {1,2,3,4}{
		\redge{(b\i1)}{(b\i2)}{(b\i3)}{(x\i)}{5.5pt}{1.5pt}{\co}{yellow,opacity=0.25}
		\redge{(b\i2)}{(b\i3)}{(x\i)}{(b\i5)}{5.5pt}{1.5pt}{\co}{yellow,opacity=0.25}
		\redge{(b\i3)}{(x\i)}{(b\i5)}{(b\i6)}{5.5pt}{1.5pt}{\co}{yellow,opacity=0.25}
		\redge{(x\i)}{(b\i5)}{(b\i6)}{(b\i7)}{5.5pt}{1.5pt}{\co}{yellow,opacity=0.25}
		}
	
	\redge{(y1)}{(y2)}{(y3)}{(y4)}{5.5pt}{1.5pt}{\co}{yellow,opacity=0.25}
	\redge{(y4)}{(y3)}{(y2)}{(v1)}{5.5pt}{1.5pt}{\co}{yellow,opacity=0.25}
	\redge{(y4)}{(y3)}{(v2)}{(v1)}{5.5pt}{1.5pt}{\co}{yellow,opacity=0.25}
	\redge{(y4)}{(v1)}{(v2)}{(v3)}{5.5pt}{1.5pt}{\co}{yellow,opacity=0.25}
	\redge{(v1)}{(v2)}{(v3)}{(v4)}{5.5pt}{1.5pt}{\co}{yellow,opacity=0.25}
	\redge{(v2)}{(v3)}{(v4)}{(z1)}{5.5pt}{1.5pt}{\co}{yellow,opacity=0.25}
	\redge{(z2)}{(z1)}{(v4)}{(v3)}{5.5pt}{1.5pt}{\co}{yellow,opacity=0.25}
	\redge{(z3)}{(z2)}{(z1)}{(v4)}{5.5pt}{1.5pt}{\co}{yellow,opacity=0.25}

	\begin{pgfonlayer}{front}
	
		\foreach \i in {1,2,3} {
		\fill (z\i) circle (2pt);
		\node at ($(z\i) -(0,.5)$) {$w_{\i}$};}
	
		\foreach \i in {1,2,3,4}{
			\fill (y\i) circle (2pt);
			\fill (v\i) circle (3pt);
			\fill (x\i) circle (3pt);
			\foreach \j in {1,...,7}{
				\fill (b\i\j) circle (2pt);}
		\node at ($(y\i) -(0,.5)$) {$u_{\i}$};
		\node at ($(x\i) -(0,.5)$) {$a_{\i}$};
	}
	\node at ($(v1) +(0,.5)$) {$x_{1}$};
	\node at ($(v4) +(0,.5)$) {$x_{4}$};
	\node at ($(v2) +(0,.4)$) {$x_{2}$};
	\node at ($(v3) +(0,.4)$) {$x_{3}$};
		
	\end{pgfonlayer}

	\begin{pgfonlayer}{background}
	
	\draw[ultra thick, decorate, decoration={snake,segment length=10,amplitude=3}, yellow!95!red]
			(b17)--(b21);
	\draw[ultra thick, decorate, decoration={snake,segment length=10,amplitude=3}, yellow!95!red]
			(b27)--(b31);
	\draw[ultra thick, decorate, decoration={snake,segment length=10,amplitude=3}, yellow!95!red]
			(b37)--(b41);
	
	\draw[ultra thick, decorate, decoration={snake,segment length=10,amplitude=3}, yellow!95!red]
			(b47) to[out=270,in=0] (z3);

	\end{pgfonlayer}

				\end{tikzpicture}
				\caption{Absorber for $(a_1,\dots, a_4)$ before and after absorption.}
				\label{fig:absorberafter}
			\end{figure}

As a result for any given $(a_1,\dots,a_4)\in V^4$ our absorbers will consist of $35$ vertices, which split into five $7$-vertex paths
(see Figure~\ref{fig:absorberafter}). 
Four of the paths are of the form $b_{i1}b_{i2}b_{i3}x_ib_{i4}b_{i5}b_{i6}$ for $i\in[4]$, where $b_{i1}b_{i2}b_{i3}b_{i4}b_{i5}b_{i6}$
is a $3$-uniform path in the joint link $H_{a_i}\cap H_{x_i}$ (cf.\ first part of the absorber outlined above). 
The fifth path  $u_{1}\dots u_4w_{1}w_{2}w_{3}$ is given by the vertices of a $K^{(4)}_{2,2,2,1}$, which together with 
$x_1,\dots,x_4$ span a $K^{(4)}_{3,3,3,2}$. In order to connect these paths into one absorbing path $P_A$, we shall also require 
that the end-triples of these paths are connectable (see Lemmata~\ref{b6} and~\ref{manyelves} below).

\subsection{Proof of the absorbing path lemma}
Roughly speaking, the following lemma shows that
the joint $3$-uniform link $H_a\cap H_x$ of (almost) all pairs of vertices $a$, $x\in V$ 
contains $\Omega(|V|^3)$ connectable triples. Consequently, a result of Erd\H os~\cite{E64} 
implies that the joint link contains $\Omega(|V|^6)$ $3$-uniform paths on six vertices with connectable end-triples 
(in fact all triples will be connectable), which shows the abundant existence of the first part of our absorbers 
for every vertex~$a\in V$ (see Lemma~\ref{b6} below).

\begin{lemma}\label{coro} 
	Given Setup~\ref{setup:2335} and $\zeta>0$, there is a set $X\subseteq V$ of size $|X|\le \sqrt\zeta n$ 
	such that for all $a\in V$ and every $x\in V\setminus X$ there are at least $(\alpha/3-\sqrt\zeta )|V|^3$ 
	triples $(b,b',b'')\in V^3$ with $bb'b''\in E(H_a)\cap E(H_x)$ and $(b,b',b'')$ being $\zeta$-connectable in $H$.
\end{lemma}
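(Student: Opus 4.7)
The plan is to apply Lemma~\ref{L35} with the pair of $3$-uniform link hypergraphs $(\overline{H}_x,\overline{H}_a)$ in place of $(H,H')$, and then to use Lemma~\ref{F41analog} to discard those few~$x$ for which too many of the bridges produced in this way fail to be $\zeta$-connectable in the ambient $4$-uniform hypergraph $H$.

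Concretely, for each $d \in V$ I would let $r(d)$ denote the number of ordered triples $(b,b',b'') \in V^3$ that are $\zeta$-bridges in $\overline{H}_d$ but are \emph{not} $\zeta$-connectable in $H$. Lemma~\ref{F41analog} then yields $\sum_{d \in V} r(d) \le \zeta n^4$, so Markov's inequality shows that
\[
X := \bigl\{x \in V \colon r(x) > \sqrt{\zeta}\, n^3\bigr\}
\]
has size $|X| \le \sqrt{\zeta}\, n$. We may assume throughout that $\zeta < \alpha^2/9$, as otherwise $\alpha/3 - \sqrt{\zeta} \le 0$ and the conclusion is vacuous.

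With this $X$ fixed, for any $a\in V$ and $x\in V\setminus X$ with $a\neq x$ the hypotheses of Lemma~\ref{L35} applied to $\overline{H}_x$ (playing the role of $H$) and $\overline{H}_a$ (playing the role of $H'$) are readily verified: their vertex sets $V\setminus\{x\}$ and $V\setminus\{a\}$ have symmetric difference $\{a,x\}$, which is of size $2\le \alpha n/18$ for large $n$; their minimum vertex degrees are at least $(5/9+\alpha)n^2/2$ by the pair degree condition on $H$; and the $(\beta,\ell)$-robust subgraphs $R_{uv}$ that Setup~\ref{setup:2335} fixed with parameter $\mu=\alpha^3/18$ serve simultaneously to realise Setup~\ref{setup:1746} for $\overline{H}_x$ (taking $R_u:=R_{ux}$ for each $u\in V(\overline{H}_x)$) with the same value of $\mu$. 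Lemma~\ref{L35} then delivers at least $(1-o(1))\,\alpha\, n^3/2$ ordered triples $(b,b',b'')$ that are $\zeta$-bridges in $\overline{H}_x$ and simultaneously satisfy $bb'b''\in E(\overline{H}_a)$.

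Each such triple satisfies $xbb'b''\in E(H)$ (since it is an edge of $\overline{H}_x$) and $abb'b''\in E(H)$ (since it is an edge of $\overline{H}_a$), whence $bb'b''\in E(H_a)\cap E(H_x)$. By the definition of $X$, at most $r(x)\le \sqrt{\zeta}\,n^3$ of these triples fail to be $\zeta$-connectable in $H$, leaving at least
\[
\tfrac{\alpha}{2}\,n^3 - \sqrt{\zeta}\,n^3 - o(n^3)
 \;\ge\; \bigl(\tfrac{\alpha}{3}-\sqrt{\zeta}\bigr)\,n^3
\]
triples with all the required properties. (The degenerate case $a=x$ reduces to the same argument with Lemma~\ref{L35} replaced by Lemma~\ref{NB3} applied to $\overline{H}_x$, which already provides $\ge n^3/3$ bridges.) The only real obstacle is the rather routine but essential verification that Setup~\ref{setup:2335} provides exactly the data Lemma~\ref{L35} needs for the pair of link hypergraphs $\overline{H}_x,\overline{H}_a$; once this is in place, the argument is a one-step application of Lemma~\ref{L35} followed by a Markov-type clean-up via Lemma~\ref{F41analog}.
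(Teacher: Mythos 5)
Your proof is correct and follows essentially the same route as the paper's own argument: define $X$ by a Markov-type threshold applied to the bound from Lemma~\ref{F41analog}, then for $a\in V$ and $x\in V\setminus X$ invoke Lemma~\ref{L35} with $\overline{H}_x$ and $\overline{H}_a$ in the roles of $H$ and $H'$, and discard the at most $\sqrt{\zeta}n^3$ non-connectable bridges. The parenthetical aside on $a=x$ is harmless but unnecessary, since Lemma~\ref{L35} imposes no requirement that $H\ne H'$.
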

\begin{proof} 
The lemma is trivially true for $\zeta\geq\alpha^2/9$ and, hence, we may assume $\zeta<\alpha^2/9$.
First, we define the set $X$. For a vertex $v\in V$ let 
\[
	\cB(v)=\{(b,b',b'')\in V^{3}\colon \text{$(b,b',b'')$ is a $\zeta$-bridge in $\overline{H}_v$, but 
		it is not $\zeta$-connectable in $H$}\}
\]
and we note that Lemma~\ref{F41analog} asserts
\[
	\sum_{v\in V}|\cB(v)|\leq \zeta n^4\,.
\] 
We define
\[
	X=\{v\in V\colon  |\cB(v)|\geq \sqrt{\zeta}n^3\}\,.
\]
and $|X|\leq\sqrt\zeta n$ follows.

It is left to show that $V\setminus X$ has the desired property. For that 
let $a\in V$ and $x\in V\setminus X$. 
An application of Lemma~\ref{L35} with $H=\overline{H}_x$ and $H'=\overline{H}_a$ yields 
for sufficiently large $n$ at least 
\[
	\frac{\alpha}{2}(n-1)^3
	>
	\frac{\alpha}{3}n^3
\]
triples $(b,b',b'')\in V^3$ such that 
\begin{equation}
\label{eq:coro1}
	bb'b''\in E(H_a)\cap E(H_x) \qand 
	\text{$(b,b',b'')$ is a $\zeta$-bridge in $H_x$.}
\end{equation}
Since $x\not\in X$, we have $|\cB(x)|<\sqrt{\zeta}n^3$
and, therefore, all but at most $\sqrt{\zeta}n^3$ of the triples 
$(b,b',b'')$ satisfying~\eqref{eq:coro1} are $\zeta$-connectable.
\end{proof}

Lemma~\ref{coro} combined with a result of Erd\H os from~\cite{E64} implies the following.

\begin{lemma}\label{b6}
Given Setup~\ref{setup:2335} and $\zeta\in(0,\alpha^2/36)$, there is 
some $\xi'=\xi'(\alpha)>0$ and a set~$X\subseteq V$ of size at most $\sqrt{\zeta}n$ 
such that the following holds.

For all $a\in V$ and $x\in V\setminus X$,  
there are at least $\xi' n^6$ sextuples $(b_1,\dots,b_6)\in V^6$ such that
\begin{enumerate}[label=\rmlabel]
	\item\label{it:Abs-S1} $b_1b_2\dots b_6$ is a $3$-uniform path in $H_a\cap H_x$
	\item\label{it:Abs-S2} and $(b_1,b_2,b_3)$, $(b_4,b_5,b_6)$ are $\zeta$-connectable in~$H$.
\end{enumerate}
\end{lemma}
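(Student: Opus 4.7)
The plan is to deduce Lemma~\ref{b6} from Lemma~\ref{coro} by combining it with a random tripartition and the classical supersaturation theorem of Erd\H os~\cite{E64} on complete partite subhypergraphs.

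First I would take $X$ to be the exceptional set provided by Lemma~\ref{coro}, so that $|X|\le\sqrt\zeta n$ is automatic. For any fixed $a\in V$ and $x\in V\setminus X$, the hypothesis $\zeta<\alpha^2/36$ guarantees that the set
\[
T=\bigl\{(b,b',b'')\in V^3\colon bb'b''\in E(H_a)\cap E(H_x)\text{ and $(b,b',b'')$ is $\zeta$-connectable in $H$}\bigr\}
\]
has size at least $(\alpha/3-\sqrt\zeta)n^3\ge\alpha n^3/6$. By averaging over a uniformly random tripartition $V=A\dcup B\dcup C$ I would then fix a tripartition with $|A|,|B|,|C|=\Theta(n)$ for which at least $|T|/27\ge\alpha n^3/162$ triples in $T$ satisfy $b\in A$, $b'\in B$, $b''\in C$, and let $\mathcal E$ denote the $3$-partite $3$-uniform hypergraph on $A\cup B\cup C$ whose edges are the corresponding unordered triples. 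By construction every edge of $\mathcal E$ is $\zeta$-connectable in $H$ when read in its natural $A$-$B$-$C$ ordering.

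The hypergraph $\mathcal E$ thus has density $\Omega_\alpha(1)$ inside $K^{(3)}_{|A|,|B|,|C|}$, so Erd\H os's theorem from~\cite{E64} together with a standard supersaturation argument produces at least $\xi'n^6$ copies of $K^{(3)}_{2,2,2}$ in $\mathcal E$ for some $\xi'=\xi'(\alpha)>0$. For any such copy with classes $\{v_1,v_2\}\subseteq A$, $\{u_1,u_2\}\subseteq B$, $\{w_1,w_2\}\subseteq C$, the sequence $v_1u_1w_1v_2u_2w_2$ is a $3$-uniform path in $\mathcal E\subseteq H_a\cap H_x$ because each of its four consecutive triples meets $A$, $B$, $C$ in exactly one vertex and therefore lies in $\mathcal E$; moreover both end-triples $(v_1,u_1,w_1)$ and $(v_2,u_2,w_2)$ appear in the natural $A$-$B$-$C$ order and are hence $\zeta$-connectable in $H$. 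This is exactly what~\ref{it:Abs-S1} and~\ref{it:Abs-S2} demand, and routine counting converts $\xi'n^6$ copies of $K^{(3)}_{2,2,2}$ into $\xi'n^6$ labelled sextuples after adjusting $\xi'$.

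The only real subtlety lies in pairing the use of a tripartition with the choice of a concrete path inside $K^{(3)}_{2,2,2}$: one must ensure that the $(A,B,C)$-ordering singled out when constructing $\mathcal E$ matches the order in which both end-triples appear in the extracted path. The pattern $ABCABC$ realised by $v_1u_1w_1v_2u_2w_2$ achieves this simultaneously for the two end-triples, and I expect this bookkeeping to be the only point requiring genuine attention.
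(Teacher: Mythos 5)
Your argument is correct and follows the paper's proof in all essentials: apply Lemma~\ref{coro} to get $X$ and the lower bound on the number of $\zeta$-connectable triples in $E(H_a)\cap E(H_x)$, build a $3$-partite $3$-uniform auxiliary hypergraph whose edges remember a fixed class ordering in which those triples are connectable, extract many copies of $K^{(3)}_{2,2,2}$ via Erd\H{o}s's theorem and supersaturation, and read off the path $v_1u_1w_1v_2u_2w_2$ whose two end-triples appear in the designated class order. The only (harmless) difference is the implementation of the auxiliary hypergraph: the paper takes three disjoint copies of $V$ as the vertex classes and then subtracts $O(n^5)$ degenerate configurations, whereas you fix a balanced random tripartition of $V$, losing a constant factor $\approx 27$ in the edge count but avoiding degeneracy.
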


\begin{proof} Let $X$ be given by Lemma~\ref{coro}
and fix two vertices $a\in V$ and $x\in V\setminus X$. We consider the auxiliary $3$-partite $3$-uniform hypergraph $B=(U\dcup U'\dcup U'',E_B)$ 
whose vertex classes are three disjoint copies of $V$ and 
edges $bb'b''\in E_B$ with $b\in U$, $b'\in U'$, and $b''\in U''$
correspond to $\zeta$-connectable triples $(b,b',b'')$ of $H$ with  $bb'b''\in E(H_a)\cap E(H_x)$.
Lemma~\ref{coro} and $\zeta<\alpha^2/36$ tell us that~$|E_B|\geq \alpha n^3/6$ 
and it follows from~\cite{E64} that~$B$ contains any complete $3$-partite $3$-uniform hypergraph of fixed size. 
In particular, there is a copy of~$K^{(3)}_{2,2,2}$ in $B$ and by the so-called supersaturation phenomenon (see, e.g.,~\cite{ES83})
there are at least 
$2\xi' n^6$ such copies for some constant $\xi'=\xi'(\alpha)$. Each such copy of $K^{(3)}_{2,2,2}$ contains a walk $b_1b_2\dots b_6$ in
$H_a\cap H_x$ satisfying~\ref{it:Abs-S2} and, consequently, there are at least $2\xi' n^6-O(n^5)\geq \xi' n^6$ paths satisfying~\ref{it:Abs-S1}
and~\ref{it:Abs-S2}.
\end{proof}

Next we focus on the second part of the absorbers.

\begin{lemma}\label{manyelves}
	There is some $\xi''>0$ such that for every $\zeta\in(0,1/126)$ the following holds.
	Given Setup~\ref{setup:2335}, there are $\xi''n^{11}$ $11$-tuples $(u_1,\dots,u_4,x_1,\dots,x_4,w_1,w_2,w_3)\in V^{11}$
	so that
	\begin{enumerate}[label=\rmlabel]
	\item\label{it:Abs-E1} $u_1\dots u_4x_1\dots x_4w_1w_2w_3$
		and $u_1\dots u_4w_1w_2w_3$ are $4$-uniform paths in $H$
	\item\label{it:Abs-E2} and $(u_1,u_2,u_3)$, $(w_1,w_2,w_3)$ are $\zeta$-connectable triples in~$H$.
	\end{enumerate}
\end{lemma}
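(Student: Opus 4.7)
The plan is to find many ordered $11$-tuples forming the required $K^{(4)}_{3,3,3,2}$ structure in $H$ such that the two ``outer'' $4$-tuples are $\zeta$-bridges; the connectability of the end-triples will then be a direct consequence of Definition~\ref{def:bridge}. First, since $\zeta<1/126$, Lemma~\ref{NB4} guarantees that the set $\mathcal{B}$ of ordered $\zeta$-bridges in $H$ has cardinality at least $(1/9-7\zeta)n^4\geq n^4/18$, so that both $E(H)$ and $\mathcal{B}$ have positive constant density among the ordered $4$-tuples of~$V$.

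Next I would apply a coloured version of the degenerate Tur\'an theorem of Erd\H{o}s~\cite{E64} to the $4$-partite lift of $H$. Concretely, consider the $4$-partite $4$-uniform template $F$ on $11$ vertices split into classes of sizes $3,3,3,2$ whose edges are the $11$ windows that we need: the eight consecutive $4$-windows of the long path $u_1\dots u_4 x_1\dots x_4 w_1w_2w_3$, together with the three additional windows $u_2u_3u_4w_1$, $u_3u_4w_1w_2$ and $u_4w_1w_2w_3$ arising once the $x_i$'s are removed. Two edges of $F$ are declared ``special'', namely the outer windows $u_1u_2u_3u_4$ and $x_4w_1w_2w_3$; for a special edge we require that its ordered image lies in $\mathcal{B}$ (with respect to the canonical ordering dictated by the $4$-partition), and for each non-special edge we require only that its unordered image lies in $E(H)$. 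Because both densities are bounded below by absolute positive constants, the standard probabilistic random-sampling proof of Erd\H{o}s's theorem (applied with this two-colour constraint, in the style of the proof of Lemma~\ref{b6}) yields at least $\xi''n^{11}$ ordered labelled embeddings of $F$ that satisfy the colour property, for some absolute constant $\xi''>0$.

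Each such embedding is an 11-tuple $(u_1,\dots,u_4,x_1,\dots,x_4,w_1,w_2,w_3)$ that immediately satisfies~\ref{it:Abs-E1}, because all eleven windows are edges of $H$ by construction; it also satisfies~\ref{it:Abs-E2}, because $(u_1,u_2,u_3,u_4)\in\mathcal{B}$ forces $(u_1,u_2,u_3)$ to be the starting triple of a $\zeta$-bridge (hence $\zeta$-connectable), and analogously $(x_4,w_1,w_2,w_3)\in\mathcal{B}$ forces $(w_1,w_2,w_3)$ to be the ending triple of a $\zeta$-bridge (hence $\zeta$-connectable). The main subtlety---and essentially the only place where the argument departs from the pattern of Lemma~\ref{b6}---is that the ``colour'' of an edge (being a $\zeta$-bridge) depends on the \emph{ordering} of its four vertices; one has to check that the $4$-partite labelling of $F$ induces exactly the ordering required by Definition~\ref{def:bridge} on the two outer windows. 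This is straightforward, since on each outer window the labelling already forces a canonical order of its four vertices through their $4$-partition classes.
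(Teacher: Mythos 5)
Your overall strategy is on the right track, and your identification of the canonical $4$-partition of the $11$ vertices, together with the observation that the bridge condition on the two outer windows yields the connectability of $(u_1,u_2,u_3)$ and $(w_1,w_2,w_3)$ via Definition~\ref{def:bridge}, is exactly the mechanism the paper uses. However, there is a genuine gap in the middle of your argument: you invoke a ``coloured version'' of the degenerate Tur\'an theorem of Erd\H os, where two edges of the template $F$ must land in $\mathcal B$ and nine must land in $E(H)$, and you claim this ``follows from the standard probabilistic random-sampling proof \dots in the style of the proof of Lemma~\ref{b6}.'' But the proof of Lemma~\ref{b6} contains no two-colour constraint: it applies the ordinary Erd\H os/supersaturation bound to a \emph{single} auxiliary hypergraph whose edges already encode the full set of requirements. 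A genuinely mixed-colour extremal result for partite templates is not a black-box corollary of~\cite{E64} and would need its own argument, which you do not supply.

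The good news is that the gap is easily closed, and in a way that makes the coloured version superfluous. Since every $\zeta$-bridge is in particular an edge of~$H$, you may impose the \emph{stronger} requirement that all crossing $4$-tuples of the $4$-partition are $\zeta$-bridges; this automatically satisfies your two-colour constraint. Concretely, build the $4$-partite $4$-uniform hypergraph $B$ on four disjoint copies of~$V$ whose edges $v_1v_2v_3v_4$ (with $v_i$ in the $i$-th class) are precisely the $\zeta$-bridges $(v_1,v_2,v_3,v_4)$. By Lemma~\ref{NB4} and $\zeta<1/126$ this has density at least $1/18$, so the standard Erd\H os theorem together with supersaturation produces $\Omega(n^{11})$ labelled copies of $K^{(4)}_{3,3,3,2}$ in~$B$. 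Traversing the vertex classes cyclically gives the long path, deleting the middle four vertices gives the short path, and the two outer windows are bridges, so both end-triples are $\zeta$-connectable. This is precisely the route the paper takes. Two further small points: you should subtract $O(n^{10})$ to discard degenerate tuples in which the disjoint copies of~$V$ overlap; and the canonical class-order on the final window is $(w_1,w_2,w_3,x_4)$, not $(x_4,w_1,w_2,w_3)$ as you wrote---though this slip is harmless since either ordering being a bridge yields the connectability of $(w_1,w_2,w_3)$.
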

The proof of Lemma~\ref{manyelves} is very similar to the proof of Lemma~\ref{b6}. However, instead of an auxiliary $3$-uniform hypergraph 
of connectable triples in the shared link of two vertices, we shall consider a $4$-uniform hypergraph of bridges.
\begin{proof}
	We consider the $4$-partite $4$-uniform hypergraph $B=(V_1\dcup V_2\dcup V_3\dcup V_4,E_B)$ 
	whose vertex classes are four disjoint copies of $V$ and whose
    edges $v_1v_2v_3v_4\in E_B$ with $v_i\in V_i$ for $i\in[4]$
	correspond to $\zeta$-bridges $(v_1,v_2,v_3,v_4)$ of $H$.
	By Lemma~\ref{NB4} and our choice of $\zeta$, there are at least 
	\[
		\Big(\frac{1}{9}-7\zeta\Big)n^4
		>
		\frac{n^4}{18}
	\] 
	$\zeta$-bridges in $H$ 
	and, hence, $|E_B|\geq n^4/18$ edges. Similar as in the proof of Lemma~\ref{b6}, this implies  
	that there are at least $2\xi'' n^{11}$ copies of the complete $4$-partite $4$-uniform 
	hypergraph~$K^{(4)}_{3,3,3,2}$ in $B$ for some universal constant $\xi''>0$.
	Passing through the vertices of each such copy of $K^{(4)}_{3,3,3,2}$ (by starting in a vertex in~$V_1$ 
	and then passing cyclically through the other vertex classes) leads to a $4$-uniform path 
	$u_1\dots u_4x_1\dots x_4w_1w_2w_3$ in~$B$. In particular, $x_1\dots x_4$ is an edge in~$B$, and owing to the completeness of 
	$K^{(4)}_{3,3,3,2}$ we see that after removing  the vertices $x_1,\dots,x_4$, the remaining vertices still form a 
	$4$-uniform path $u_1\dots u_4w_1w_2w_3$ in~$B$.
	
	By definition of $B$ every such path $u_1\dots u_4x_1\dots x_4w_1w_2w_3$ corresponds to a walk in~$H$.
	Consequently,~$H$ contains at least  $2\xi'' n^{11}-O(n^{10})\geq \xi'' n^{11}$ $11$-tuples $u_1\dots u_4x_1\dots x_4w_1w_2w_3$ 
	that satisfy part~\ref{it:Abs-E1} of Lemma~\ref{manyelves}. Moreover, recalling that edges of $B$ correspond to $\zeta$-bridges in~$H$
	it follows that $(u_1,u_2,u_3)$, $(w_1,w_2,w_3)$ are $\zeta$-connectable in $H$ for every such $11$-tuple, i.e., part~\ref{it:Abs-E2}
	holds as well.
	\end{proof}

Next we define the \emph{absorbers}, which will be the building blocks of the absorbing path~$P_A$ in 
Proposition~\ref{prop:absorbingP}.

\begin{dfn}\label{absorber} 
Given Setup~\ref{setup:2335}, $\zeta>0$,
and $\sa=(a_1,\dots,a_4)\in V^4$, we say that a tuple $(\sb_1,\dots,\sb_4,\su,\sx,\sw)\in V^{35}$
with 
\[
	\sb_i=(b_{i1},\dots,b_{i6})\ \text{for $i\in[4]$}\,,\
	\su=(u_1,\dots,u_4)\,,\ 
	\sx=(x_1,\dots,x_4)\,,\ 
	\tand\
	\sw=(w_1,w_2,w_3)\,,
\]
is an {\it $\sa$-absorber} in~$H$, if 
\begin{enumerate}[label=\alabel]
\item\label{it:350} all its~$35$ vertices are distinct and different from 
those in $\sa$,
\item\label{it:35a} $\sb_i$ satisfies properties~\ref{it:Abs-S1} and~\ref{it:Abs-S2} of Lemma~\ref{b6} for $a_i$ and $x_i$ for every $i\in[4]$,
\item\label{it:35b} and $(\su,\sx,\sw)$ satisfies properties \ref{it:Abs-E1} and~\ref{it:Abs-E2} of Lemma~\ref{manyelves}.
\end{enumerate}
\end{dfn}
Formally, an $\sa$-absorber is defined to be a septuple. However, since it consists of $35$ vertices we may refer to it sometimes
as a $35$-tuple from $V^{35}$. Similarly, in part~\ref{it:35b} we refer to $(\su,\sx,\sw)$ as an $11$-tuple.

We note that if $\sa=(a_1,\dots,a_4)$ consists of four distinct vertices, then an $\sa$-absorber can be used 
to absorb the set $\{a_1,\dots,a_4\}$ as follows (see Figure~\ref{fig:absorberafter}).
The $35$ vertices of an $\sa$-absorber 
$(\sb_1,\dots,\sb_4,\su,\sx,\sw)$ can be partitioned into five $4$-uniform paths  
\[
	b_{i1}b_{i2}b_{i3}x_ib_{i4}b_{i5}b_{i6}\ \ \text{for $i\in[4]$}
	\qand 
	u_1\dots u_4w_1w_2w_3\,,
\]
in $H$, each of which starts and ends with a $\zeta$-connectable triple. 
If all five of these paths are segments (not necessarily consecutive) of
the absorbing path $P_A$, while all $a_1,a_2,a_3,a_4$ are not on $P_A$, 
then one can replace these five paths by
\[
	b_{i1}b_{i2}b_{i3}a_ib_{i4}b_{i5}b_{i6}\ \ \text{for $i\in[4]$}
	\qand
	u_1\dots u_4x_1\dots x_4w_1w_2w_3\,,
\]
i.e., replace $x_i$ with $a_i$ in the four ``$b$-paths'' and include $x_1,\dots,x_4$ in the middle of the fifth path.

Below we easily deduce from Lemmata~\ref{b6} and~\ref{manyelves} that there are $\Omega(n^{35})$ absorbers for every fixed 
$4$-tuple~$\sa\in V^4$. This fact will play a key r\^ole in the proof of the absorbing path lemma.
\begin{lemma}\label{AL}
	Given Setup~\ref{setup:2335}, there are constants $\zeta'_0=\zeta'_0(\alpha)$ and $\xi=\xi(\alpha)>0$
	such that for every $\zeta\in(0,\zeta'_0)$ and for every $\sa\in V^4$
	the number of $\sa$-absorbers in~$H$ is at least $\xi n^{35}$. 
\end{lemma}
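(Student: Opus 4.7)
My plan is to assemble an $\sa$-absorber greedily: first select the eleven-tuple $(\su,\sx,\sw)$ via Lemma~\ref{manyelves} and then, for each $i\in[4]$ in turn, select the sextuple $\sb_i$ via Lemma~\ref{b6} applied to the pair $(a_i,x_i)$. The counts at the two stages combine multiplicatively to yield $\Omega(n^{35})$ absorbers.

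More precisely, fix $\sa=(a_1,\ldots,a_4)\in V^4$ and let $\xi'$, $\xi''$, and the exceptional set $X\subseteq V$ with $|X|\le\sqrt{\zeta}\,n$ be as supplied by Lemmata~\ref{b6} and~\ref{manyelves}. I pick $\zeta'_0>0$ small enough so that both lemmata apply for every $\zeta\in(0,\zeta'_0)$ and, in addition, $\sqrt{\zeta'_0}\le \xi''/16$. Lemma~\ref{manyelves} then provides at least $\xi''n^{11}$ eleven-tuples $(\su,\sx,\sw)$ satisfying its properties~(i) and~(ii). Among these, at most $O(n^{10})$ have some vertex coinciding with a vertex of $\sa$, and at most $4\sqrt{\zeta}\,n^{11}$ have some $x_i\in X$. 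By the choice of $\zeta'_0$ and for $n$ large enough, at least $\xi''n^{11}/2$ ``good'' eleven-tuples survive, for which every $x_i$ lies outside $X$ and no vertex lies in $\sa$.

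For every good eleven-tuple, I next select $\sb_1,\sb_2,\sb_3,\sb_4$ one by one. Since $x_i\notin X$, Lemma~\ref{b6} applied to $a_i$ and $x_i$ supplies at least $\xi'n^6$ sextuples satisfying its two properties. At stage~$i$ I must additionally avoid the $4+11+6(i-1)\le 33$ vertices already used; since each vertex appears in at most $6n^5$ sextuples, this rules out only $O(n^5)$ sextuples, leaving at least $\xi'n^6/2$ admissible choices for $\sb_i$ when $n$ is sufficiently large. Multiplying through all five stages yields
\[
	\frac{\xi''}{2}\,n^{11}\cdot\left(\frac{\xi'}{2}\right)^{\!4}n^{24}
	=
	\frac{\xi''(\xi')^4}{32}\,n^{35}
\]
$\sa$-absorbers, so $\xi:=\xi''(\xi')^4/32$ is the desired constant.

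There is no serious obstacle. The only subtlety is ensuring that each $x_i$ avoids the exceptional set $X$ from Lemma~\ref{b6}, which is precisely what forces the hypothesis $\sqrt{\zeta'_0}\ll \xi''$; the disjointness conditions of clause~(a) in Definition~\ref{absorber} contribute only lower-order corrections at each greedy step.
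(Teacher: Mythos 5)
Your proof is correct and takes essentially the same route as the paper: first count the $11$-tuples $(\su,\sx,\sw)$ via Lemma~\ref{manyelves}, discard the small fraction that interacts badly with $X$ or $\sa$, and then for each surviving $11$-tuple apply Lemma~\ref{b6} four times to choose the $\sb_i$'s, controlling overlaps by crude $O(n^{5})$ (resp.\ $O(n^{10})$) subtractions. The only cosmetic differences are in bookkeeping: the paper excludes all $11$-tuples meeting $X$ rather than only those with some $x_i\in X$, and it performs the disjointness correction for $\sb_1,\dots,\sb_4$ in a single step (losing one factor of $1/2$) whereas you do it greedily stage by stage (losing $(1/2)^4$), which merely produces a slightly smaller but still valid constant $\xi$.
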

\begin{proof}
	For a fixed $\alpha\in(0,1/3)$ let $\xi'(\alpha)>0$ and $\xi''>0$ be provided by Lemmata~\ref{b6} and~\ref{manyelves}.
	We set
	\[
		\zeta'_0=\min\Big\{\Big(\frac{\xi''}{23}\Big)^2\,,\ \frac{\alpha^2}{126}\Big\}
		\qqand
		\xi=\frac{1}{4}(\xi')^4\xi''
	\]
	and let $\zeta\in(0,\zeta'_0)$ and $\sa=(a_1,\dots,a_4)\in V^4$ be given. Moreover, let $X\subseteq V$ be the exceptional set 
	of vertices of size at most $\sqrt{\zeta}n$ given by Lemma~\ref{b6}.
	
	Lemma~\ref{manyelves} yields $\xi''n^{11}$ distinct $(\su,\sx,\sw)\in V^{11}$ with 
	$\su=(u_1,\dots,u_4)$, $\sx=(x_1,\dots,x_4)$, and $\sw=(w_1,w_2,w_3)$ satisfying 
	properties \ref{it:Abs-E1} and~\ref{it:Abs-E2} of the lemma. Obviously, at most 
	\[
		11(|X|+4)n^{10}
		\leq
		11\sqrt{\zeta}n^{11}+44n^{10}
	\]
	of these $11$-tuples
	share a vertex with $X\cup\{a_1,\dots,a_4\}$. Consequently, our choice of $\zeta'_0> \zeta$ guarantees that for sufficiently large $n$
	at least $\xi''n^{11}/2$ of these $11$-tuples are disjoint from~$X$ and from $\sa$. 
	
	Next, for such a fixed $11$-tuple $(\su,\sx,\sw)$ we apply 
	Lemma~\ref{b6} for every $i\in[4]$ to $a_i$ and~$x_i$. Each application yields $\xi'n^6$ sextuples $(b_{i1},\dots,b_{i6})$
	satisfying properties \ref{it:Abs-S1} and~\ref{it:Abs-S2} of that lemma. Taking into account that we insist that all 
	the vertices $b_{ij}$ for $i\in [4]$ and $j\in[6]$ need to be distinct and different from the already fixed vertices of
	$\sa$, $\sx$, $\su$, and $\sw$ this gives rise to at least 
	\[
		\frac{1}{2}\big(\xi'n^6\big)^4
	\]
	such choices of $\sb_1,\dots,\sb_4$ for every fixed $(\su,\sx,\sw)$. Summing over all possible choices of~$(\su,\sx,\sw)$
	leads to at least
	\[
		\frac{1}{2}\xi''n^{11} \times \frac{1}{2}\big(\xi'n^6\big)^4
		\geq \xi n^{35}
	\]
	$\sa$-absorbers in $H$.	
\end{proof}

After these preparations we conclude this section with the somewhat standard proof of the absorbing path lemma.
In the proof we first find a suitable selection of $\Omega(n)$ disjoint $35$-tuples that contain 
many $\sa$-absorbers for every~$\sa$. In the second and final step we then utilise the
$\zeta$-connectable end-triples to connect these $35$-tuples, 
each consisting of five disjoint paths of length  four, into one absorbing path avoiding the
given set~$\cR$.

\begin{proof}[Proof of Proposition~\ref{prop:absorbingP}]
	For $\alpha\in(0,1/3)$ let $\l\geq 3$ be given by Setup~\ref{setup:2335}
	and let $\zeta_0'=\zeta'_0(\alpha)$ and $\xi=\xi(\alpha)$ be given by Lemma~\ref{AL}. Set
	\[
		\zeta_0=\min\Big\{\zeta_0'\,,\ \frac{\xi}{12\cdot1400\l^2}\Big\}\,,
	\]
	and for $\zetas\in(0,\zeta_0)$ let $\thetas$ along with a sufficiently large $4$-uniform hypergraph 
	$H=(V,E)$ and $\cR\subseteq V$ of size at most $\thetas^2 n$ be given. Without loss of generality we 
	can assume that $\thetas<\zetas$.
	
	Applying Lemma~\ref{AL} with $\zetas$ yields for every $\sa\in V^4$ at least $\xi n^{35}$ $\sa$-absorbers in~$H$.
	However, since the absorbing path is required to be disjoint from $\cR$, only absorbers disjoint from $\cR$
	are useful here. Let $\cA(\sa)$ be the set of all $\sa$-absorbers disjoint from~$\cR$ and note
	\begin{equation}\label{eq:Aa}
		\big|\cA(\sa)\big|
		\geq
		\xi n^{35}-35|\cR|n^{34}
		\geq 
		\frac{\xi}{2}n^{35}\,.
	\end{equation}
	Let $\cA=\bigcup\cA(\sa)\subseteq (V\setminus\cR)^{35}$ 
	be the set of all absorbers outside $\cR$, where the union runs over all four tuples 
	$\sa\in V^{4}$.
 	
	We set
	\[
		p=\frac{4\zeta_0\thetas}{\xi n^{34}}
	\]
	and consider a random collection $\cA_p\subseteq \cA$, where every absorber from $\cA$ is included 
	independently with probability~$p$. Standard applications of Markov's inequality and 
	of Chernoff's inequality show that with positive probability the random 
	set~$\cA_p$ satisfies the following three properties 
	\begin{align}
		\big|\cA_p\big|
		&\leq 
		3\cdot pn^{35}\,,
		\label{eq:Ap1}\\
		\big|\{(A,A')\in\cA_p^2\colon A \tand A'\ \text{share a vertex}\}\big|
		&\leq 
		3\cdot35^2p^2n^{69}\,,\label{eq:Ap2}\\
		\text{and for every $\sa\in V^4$ we have}
		\quad
		\big|\cA_p\cap\cA(\sa)\big|
		&\geq
		\frac{1}{2}\cdot p\big|\cA(\sa)\big|\,.\label{eq:Ap3}
	\end{align}
	Consequently, there exists a collection $\cB_0\subseteq\cA$ satisfying~\eqref{eq:Ap1}--\eqref{eq:Ap3}
	with $\cB_0$ replacing~$\cA_p$. We further pass to a maximal subcollection $\cB\subseteq \cB_0$ of 
	mutually disjoint absorbers. The choices of~$p$ and $\zeta_0$ combined with~\eqref{eq:Aa} and 
	$\thetas<\zetas<\zeta_0$ allow us to transfer~\eqref{eq:Ap1} and~\eqref{eq:Ap3} 
	to the set $\cB$ as follows
	\begin{equation}\label{eq:B1}
		\big|\cB\big|
		\leq
		3\cdot pn^{35}
		=
		\frac{12\zeta_0\thetas}{\xi}n
		\leq\frac{\thetas}{1400\l^2}n
	\end{equation}
	and for every $\sa\in V^4$ we have
	\begin{equation}\label{eq:B2}
		\big|\cB\cap\cA(\sa)\big|
		\overset{\eqref{eq:Ap2}}{\geq}
		\frac{1}{2}\cdot p\big|\cA(\sa)\big|-3\cdot35^2p^2n^{69}
		\geq \zeta_0\thetas n
		-
		3\cdot35^2\frac{16\zeta_0^2\thetas^2}{\xi^2}n
		\geq 
		\frac{1}{2}\thetas^2n\,.
	\end{equation}
	
It remains to connect the absorbers from $\cB$ into a path. Recall that every $35$-tuple in $\cB$
consists of five $7$-vertex paths with $\zetas$-connectable end-triples and that all those $5|\cB|$ paths 
are mutually vertex disjoint. Let $\cP$ be the collection of all these $7$-vertex paths.

%Note that applying Proposition~\ref{lem:con} straightforwardly $|\cP|-1$ times easily
%yields a walk~$W$ containing all paths from $\cP$ and
%consisting of at most 
%\begin{equation}\label{eq:Psize}
%	|V(W)|
%	\leq
%	7\cdot |\cP|+(|\cP|-1)\cdot (8\l+10)
%	\overset{\l\geq 3}{\leq}
%	70\l|\cB|
%	\overset{\eqref{eq:B1}}{\leq}\frac{\thetas}{20\l} n
%\end{equation}
%vertices. For the proof of Proposition~\ref{prop:absorbingP} we have to make sure
%that $W$ can be indeed a path with all of its vertices outside of $\cR$. 

Finally we construct the absorbing path by connecting all paths from $\cP$.
For that we consider a maximal family of paths $\cP^{\star}\subseteq \cP$
for which there exists a path $P^{\star}_A\subseteq H-\cR$, containing all the paths from $\cP^{\star}$, 
whose end-triples are $\zetas$-connectable and such that
\begin{equation}\label{eq:sizePs}
	|V(P^{\star}_A)|
	=
	7\cdot|\cP^{\star}|+(|\cP^{\star}|-1)\cdot(8\l+10)
	\overset{\l\geq 3}{\leq}
	70\l|\cB|
	\overset{\eqref{eq:B1}}{\leq}
	\frac{\thetas}{20\l} n\,.
	%\overset{\eqref{eq:Psize}}{\leq}
	%\frac{\thetas}{20\ell} n\,.
\end{equation}

Clearly, $\cP^{\star}\ne\varnothing$ and thus,  $P^{\star}_A\ne\varnothing$. 
Assume for the sake of contradiction that there is some $P\in\cP\setminus\cP^{\star}$ 
and let $(x,y,z)$ be the starting triple of $P$. Moreover, let $(a,b,c)$ be the ending 
triple of $P^{\star}_A$.
Since both triples~$(a,b,c)$ and~$(x,y,z)$ are $\zeta$-connectable,
Proposition~\ref{lem:con} tells us that there are at least $\thetas n^{8\l+10}$
 $abc$-$xyz$-paths with $8\l+10$ inner vertices in $H$. 
Since~\eqref{eq:sizePs} combined with $|\cR|\leq \thetas^2n$ yields
\[
	\thetas n^{8\l+10}- (8\l+10)\big(\big|V(P^{\star}_A)\big|+\big|\cR\big|\big)n^{8\l+9}
	>0\,,
\]
there is at least one connecting path disjoint to $V(P^{\star}_A)\cup \cR$
giving rise to a path $P^{\star\star}_A\subseteq H-\cR$
containing~$\cP^{\star}\cup\{P\}$. This 
contradicts the maximality of $\cP^{\star}$ and consequently the
desired path $P_A$ containing all paths from $\cP$ does really exist. 

In fact, Property~\ref{it:PA1} of Proposition~\ref{prop:absorbingP} is a consequence of~\eqref{eq:sizePs}
and part~\ref{it:PA2} is also clear from the definition. For part~\ref{it:PA3} of Proposition~\ref{prop:absorbingP}, 
let $Z$ be a set outside $P_A$ of size at most $2\theta_\star^2n$ with 
$|Z|\equiv 0 \pmod 4$.  It follows from~\eqref{eq:B2}, 
that one can successively absorb quadruples of distinct vertices of $Z$ 
into the path, at least $\thetas^2n/2$ times,
always having at least one unused absorber at hand.
\end{proof}

\section{Path cover lemma}
\label{sec:long_path}

The goal of this section is to establish the following $4$-uniform variant
of Lemma~\ref{lem:2140}. 

\begin{prop}[Path cover lemma for $4$-uniform hypergraphs]\label{prop:1625}
	For every $\alpha\in (0, 1/4)$ there is a constant $\theta_0(\alpha)>0$
	such that for every positive $\theta_\star<\theta_0(\alpha)$
	there are a constant $\zeta_{\star\star}=\zeta_{\star\star}(\alpha, \theta_\star)>0$ 
	and arbitrarily large natural numbers $M$ with $M\equiv 3\pmod{4}$ such that the 
	following holds.  

	Given Setup~\ref{setup:2335} and a set $X\subseteq V$ with $|X|\le 2\theta_\star n$
	we can cover all but at most $\theta_\star^2 n$ vertices of $H-X$ by disjoint $M$-vertex
	paths that start and end with a $\zeta_{\star\star}$-connectable triple. 
\end{prop}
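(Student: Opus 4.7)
The plan is to adapt the strategy underlying the 3-uniform analogue Lemma~\ref{lem:2140} (implicit in \cite{old}*{Section~7}), replacing connectable pairs with connectable triples and invoking the 4-uniform connecting lemma (Proposition~\ref{lem:con}) together with Corollary~\ref{all4} in place of their 3-uniform counterparts. The high-level idea is to construct a single nearly-spanning 4-uniform tight path $Q$ in $H-X$ whose interior contains many ``landmark'' positions at which the local ending triple is $\zeta_{\star\star}$-connectable, and then to cut $Q$ at every such landmark that is $M$ vertices away from the previous one.

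To construct $Q$, I would first fix the hierarchy $\theta_\star \ll \zeta_{\star\star} \ll \alpha$, choose $M \equiv 3 \pmod{4}$ sufficiently large (the residue class being forced by the length flexibility in Corollary~\ref{all4} together with the divisibility requirement on the uncovered set when this proposition is deployed in the main proof), and set aside a small internal reservoir $\mathcal{R}' \subseteq V \setminus X$ via an adaptation of Proposition~\ref{prop:reservoir} to be used for detours. Starting from any $\zeta_{\star\star}$-connectable triple---of which Lemma~\ref{NCT} provides $\Omega(n^3)$---I would build $Q$ in alternating phases: a vertex-by-vertex extension phase that contributes $M - (8\ell+10)$ fresh vertices drawn from $V\setminus(X\cup\mathcal{R}')$, followed by a detour phase invoking Proposition~\ref{lem:con} through $8\ell+10$ inner vertices taken from $\mathcal{R}'$ and landing on a new $\zeta_{\star\star}$-connectable triple. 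Each completed phase pair therefore contributes exactly $M$ vertices to $Q$ and installs a landmark at its terminus, so cutting $Q$ at landmarks yields $M$-vertex segments with $\zeta_{\star\star}$-connectable end-triples. The process terminates once at most $\theta_\star^2 n$ vertices of $V\setminus X$ remain uncovered.

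The principal obstacle will be verifying that the vertex-by-vertex extension phase can always be continued until very few vertices are left. This amounts to showing that for almost every ending triple $(x,y,z)$ of $Q$, there are many extension vertices $w$ in $V\setminus X$ that have not yet been used---formally, that the link $H_{yz}$ contains many edges $xw$ with $w$ unused---which follows from $\delta_2(H) \geq (5/9+\alpha)n^2/2$ provided the unused part has positive density and a small set of ``bad'' pairs with few extensions in $V\setminus X$ is discarded at the outset. Rare exceptional triples with few free extensions can be bypassed by triggering a detour through $\mathcal{R}'$ prematurely, and the cost of such emergency detours is $O(\ell/M)$ per extension step, which keeps the reservoir from being exhausted before the coverage target is reached. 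Counting-based estimates closely paralleling those in the proof of the 3-uniform Lemma~\ref{lem:2140}---together with an argument, in the spirit of Corollary~\ref{lem:use-reservoir}, that the remaining usable portion of $\mathcal{R}'$ still supports detours throughout---complete the argument.
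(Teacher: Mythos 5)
Your proposal diverges fundamentally from the paper's argument and, more importantly, the core step does not go through. You assert that the vertex-by-vertex extension phase can be continued ``for almost every ending triple $(x,y,z)$ of $Q$'' because $\delta_2(H)\geq(5/9+\alpha)n^2/2$ ensures $H_{yz}$ is dense. But to extend the path one needs a vertex $w$ with $xyzw\in E$, i.e.\ a neighbour of the specific vertex $x$ in the link graph $H_{yz}$; the pair-degree hypothesis gives a lower bound on $e(H_{yz})$, not on $d_{H_{yz}}(x)$, and the latter can be essentially $0$. Discarding a small set of ``bad pairs'' $\{y,z\}$ does not help: even in a dense link $H_{yz}$ a positive proportion of vertices $x$ may have vanishing degree. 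Worse, your fallback of ``triggering a detour through $\mathcal R'$ prematurely'' requires the current end-triple to be $\zeta_{\star\star}$-connectable, and nothing in a greedy extension prevents the path from drifting into a non-connectable triple, at which point Proposition~\ref{lem:con} gives no rescue. This is precisely the obstruction that the robust-subgraph/connectability machinery in this paper was built to handle, and a naive greedy construction cannot circumvent it.

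The paper's actual proof avoids greedy extension entirely. It takes a \emph{maximum} collection $\ccC$ of $M$-vertex paths with $\zeta_{\star\star}$-connectable ends, assumes for contradiction that the uncovered set $U$ exceeds $\theta_\star^2 n$, partitions $V\setminus X$ into $M$-blocks, introduces random $m$-block \emph{societies} $\cS$, and shows (Claim~\ref{clm:2042}) that for each non-bad $u\in U$ most societies are \emph{useful}, meaning the $3$-uniform link $H_u[S]$ inherits Setup~\ref{setup:1746} with suitable constants. Claim~\ref{clm:2053} then applies the $3$-uniform covering principle (Proposition~\ref{prop:1742}) inside $H_u[S]$ to produce $m+1$ short $3$-uniform paths; finally a double-counting/pigeonhole step finds one society useful for so many $u\in U$ that those paths can be lifted to $m+1$ vertex-disjoint $4$-uniform $M$-vertex paths (by inserting distinct $u$'s at every fourth position, as in Figure~\ref{fig:aug}), contradicting the maximality of $\ccC$. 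This reduction from $4$-uniform to $3$-uniform via the links, together with the extremal (rather than constructive) framing, is what allows the pair-degree condition to be brought to bear; your proposal has no substitute for it.
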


We would like to remark that the constants in this statement can be thought of as 
forming a hierarchy $\alpha\gg \theta_\star\gg \zeta_{\star\star}\gg M^{-1}\gg n^{-1}$.
In our intended application, the set $X$ will be the union of the reservoir and the vertex
set of the absorbing path. Moreover, it will be important that we have the liberty to take
$M$ to be substantially larger than the reciprocal of a further constant $\theta_{\star\star}$ 
obtained by applying the connecting lemma to $\zeta_{\star\star}$. 
%Finally, we require the vertex set of the hypergraph under discussion to be very large 
%in comparison to $M$. 
  
\begin{proof}
	Recall that Setup~\ref{setup:2335} involves a constant $\beta>0$ as well as a natural 
	number $\ell\ge 3$. We will assume throughout 
	that $\alpha, \beta, \ell^{-1}\gg \theta_\star\gg \zeta_{\star\star}$ without 
	calculating these dependencies explicitly.
	Let $P\subseteq 3\NN$ be the infinite arithmetic progression which the $3$-uniform
	Proposition~\ref{prop:1742} delivers for $\alpha/4$ and $\zeta_{\star\star}$ 
	here in place of $\alpha$ and $\xi$ there. Now let $M\gg \zeta_{\star\star}^{-1}$ be a 
	sufficiently large natural number with 
	$M\equiv 3\pmod{4}$ and $\frac 34(M+1)\in P$. 
	The number $M$ will play two different r\^{o}les and hoping to enhance the visibility
	of this fact we set $m=M$.  
	
	Now let a $4$-uniform hypergraph $H=(V, E)$ on $n\gg M$ vertices satisfying the minimum 
	pair degree condition $\delta_2(H)\ge \bigl(\frac59+\alpha\bigr)\frac{n^2}{2}$
	as well as a family $\bigl\{R_{uv}\colon uv\in V^{(2)}\bigr\}$ of robust subgraphs 
	of its link graphs exemplifying Setup~\ref{setup:2335} be given. Set 
	\[
		\ccP=\bigl\{P\subseteq H-X\colon P \text{ is an $M$-vertex path
			whose end-triples are $\zeta_{\star\star}$-connectable}\bigr\}
	\]
	and consider a maximum collection $\ccC\subseteq \ccP$ of vertex-disjoint paths.
	We are to establish that the set 
	\[
		U=V\setminus \Bigl(X\cup \bigcup\nolimits_{C\in \ccC}V(C)\Bigr)
	\]
	of uncovered vertices satisfies
	\begin{equation}\label{eq:U-small}
		|U| \le \theta_\star^2 n\,,
	\end{equation}
	so for the rest of the proof we can assume that \eqref{eq:U-small} is false.
	Our strategy for obtaining a contradiction is that we 
	find up to $m$ appropriate paths in $\ccC$ and show that the union of their 
	vertex sets with $U$ spans at least $m+1$ vertex-disjoint paths from $\ccP$.
	For a vertex $u\in U$ to have some chances to participate in this rerouting
	its link hypergraph should be somewhat ``typical'' and our next step is to 
	identify a set $\Ubad\subseteq U$ of bad vertices which we will not use for 
	incrementing $\ccC$.
	
	Recall that, as discussed between Setup~\ref{setup:2335} and 
	Definition~\ref{def:connectable4}, for every $u\in U$ the link hypergraph
	$\overline{H}_u$ and the family $\bigl\{R_{uv}\colon v\in V\setminus\{u\}\bigr\}$
	of $(\beta, \ell)$-robust graphs realise Setup~\ref{setup:1746}.
	Due to $\theta_{\star}\ll \ell^{-1}, \beta, \alpha$ and our assumption
	$|X|\le 2\theta_\star n$ it follows that the hypergraph $\overline{H}_u-X$
	and the family 
	\[
		\Psi=\bigl\{R_{uv}-X\colon v\in V\setminus (X\cup\{u\})\bigr\}
	\]
	of $(\beta/2, \ell)$-robust graphs exemplify Setup~\ref{setup:1746} with 
	$(\alpha/2, \beta/2, \alpha^3/9)$ here in place of $(\alpha, \beta, \mu)$ there. 
	In particular, 
	we can speak of $\zeta_{\star\star}$-connectable pairs and $\zeta_{\star\star}$-bridges
	with respect to the constellation $(\overline{H}_u-X, \Psi)$ and in the sequel we shall call 
	them {\it $(\zeta_{\star\star}, X)$-connectable pairs in $\overline{H}_u-X$} 
	and {\it $(\zeta_{\star\star}, X)$-bridges in $\overline{H}_u-X$}, respectively.   
	To clarify the relation between these concepts, we remark that 
	every $(2\zeta_{\star\star}, X)$-connectable pair of distinct vertices from $H_u-X$
	is, in particular $\zeta_{\star\star}$-connectable in $H_u$. Consequently, every
	$(2\zeta_{\star\star}, X)$-bridge in $\overline{H}_u-X$ is a 
	$\zeta_{\star\star}$-bridge in $\overline{H}_u$. 
	Now the vertices that we will not touch while refuting the maximality of $\ccC$ are those in 
	the set
	\begin{multline*}
		\Ubad = \bigl\{u\in U\colon \text{the number 
			of $(2\zeta_{\star\star}, X)$-bridges in $\overline{H}_u-X$
			which are } \\
			\text{ $\zeta_{\star\star}$-connectable in $H$ is at most $n^3/8$}\bigr\}\,.
	\end{multline*}

	\begin{claim}\label{clm:1717}
		We have $|\Ubad| \le 8\zeta_{\star\star} n$.
	\end{claim}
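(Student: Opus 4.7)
The plan is a double-counting argument applied to the set
\[
	\Theta=\big\{(u,(x,y,z))\in V\times V^3\colon (x,y,z)\text{ is a }(2\zetass,X)\text{-bridge in }\overline{H}_u-X\text{ but not }\zetass\text{-connectable in }H\big\}.
\]
On the one hand, each $u\in\Ubad$ will be shown to contribute many triples to $\Theta$; on the other hand, Lemma~\ref{F41analog} bounds $|\Theta|$ globally. Comparing the two estimates will immediately yield the claim.

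For the lower bound on $|\Theta|$, I fix $u\in U$ (so in particular $u\notin X$) and invoke Lemma~\ref{NB3} applied to the constellation $(\overline{H}_u-X,\Psi)$, which, as noted in the excerpt, exemplifies Setup~\ref{setup:1746} with parameters $(\alpha/2,\beta/2,\alpha^3/9)$. The Lemma's formal hypothesis $\mu=\alpha/4$ is implied here because $\alpha^3/9\le(\alpha/2)/4$ on the relevant range of $\alpha$, and $2\zetass<(\alpha/2)/4$ holds since $\zetass\ll\alpha$. Since $|X|\le 2\theta_\star n$ with $\theta_\star$ small, one obtains at least
\[
	\frac{|V(\overline{H}_u-X)|^3}{3}\ge\frac{(1-3\theta_\star)^3 n^3}{3}\ge\frac{n^3}{4}
\]
$(2\zetass,X)$-bridges in $\overline{H}_u-X$ for large $n$. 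By definition of $\Ubad$, at most $n^3/8$ of these are $\zetass$-connectable in $H$, so each $u\in\Ubad$ contributes at least $n^3/8$ pairs to $\Theta$, giving $|\Theta|\ge |\Ubad|\cdot n^3/8$.

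For the upper bound on $|\Theta|$, I use the monotonicity observation recorded just before the claim: every $(2\zetass,X)$-bridge in $\overline{H}_u-X$ is in particular a $\zetass$-bridge in $\overline{H}_u$. Hence every element of $\Theta$ corresponds to a quadruple $(x,y,z,u)\in V^4$ such that $(x,y,z)$ is a $\zetass$-bridge in $\overline{H}_u$ while failing to be $\zetass$-connectable in $H$. A direct application of Lemma~\ref{F41analog} with $\zeta=\zetass$ then yields $|\Theta|\le\zetass n^4$.

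Combining, $|\Ubad|\cdot n^3/8\le\zetass n^4$, which rearranges to $|\Ubad|\le 8\zetass n$, as desired. I do not foresee any serious obstacle: the only bookkeeping point is lining up the parameters of Lemma~\ref{NB3} with those provided by the shifted constellation $(\overline{H}_u-X,\Psi)$, and this is immediate from the hierarchy $\alpha\gg\theta_\star\gg\zetass$ assumed throughout the proof of Proposition~\ref{prop:1625}.
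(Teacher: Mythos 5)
Your argument is the same double counting as the paper's: the set you call $\Theta$ (restricted to $u\in\Ubad$ for the lower bound) is exactly the paper's $\Pi$, with the identical lower bound via Lemma~\ref{NB3} applied to $(\overline{H}_u-X,\Psi)$ and the identical upper bound via Lemma~\ref{F41analog} after upgrading $(2\zeta_{\star\star},X)$-bridges to $\zeta_{\star\star}$-bridges in $\overline{H}_u$. Your additional remark justifying that the smaller $\mu=\alpha^3/9$ still licenses the $\mu=\alpha/4$ version of Lemma~\ref{NB3} is correct and only makes explicit a monotonicity the paper leaves implicit.
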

	
	\begin{proof}
		Consider the set 
		\begin{multline*}
			\Pi=\bigl\{(u, e)\in \Ubad\times V^3\colon \text{the triple $e$ is a 
					$(2\zeta_{\star\star}, X)$-bridge in $\overline{H}_u-X$,}\\
					\text{ but not $\zeta_{\star\star}$-connectable in $H$}\bigr\}\,.
		\end{multline*}
		Since for $u\in\Ubad$ every $(2\zeta_{\star\star}, X)$-bridge in $\overline{H}_u-X$
		is a $\zeta_{\star\star}$ bridge in $\overline{H}_u$, Lemma~\ref{F41analog} tells us that
		\[
			|\Pi|\le \zeta_{\star\star} n^4\,.
		\]

		On the other hand, for every $u\in\Ubad$ the number 
		of $(2\zeta_{\star\star}, X)$-bridges in $\overline{H}_u-X$ is 
		at least $(n-|X|)^3/3$ by Lemma~\ref{NB3} and by the 
		definition of $\Ubad$ all but at most $n^3/8$ of them fail to be 
		$\zeta_{\star\star}$-connectable in $H$, whence 
		\[
			|\Pi|
			\ge 
			|\Ubad|\left(\frac{(n-|X|)^3}{3}-\frac{n^3}{8}\right)
			\ge 
			\frac{|\Ubad| n^3}8\,.
		\]
		Comparing our estimates on $|\Pi|$ we obtain indeed 
		that $|\Ubad| \le 8\zeta_{\star\star} n$.
	\end{proof}

	\subsection*{Useful societies} Denote the vertex sets of the paths in our 
	maximum collection $\ccC$ by~$B_1, \ldots, B_{|\ccC|}$ and fix an arbitrary  partition 
	\[
		U
		=
		B_{|\ccC|+1}\dcup \ldots \dcup B_{\nu}\dcup B'
	\]
	with
	\[
		|B_{|\ccC|+1}| = \ldots = |B_\nu| = M > |B'|\,.
	\]
	The sets belonging to the family
	\[
		\ccB=\{B_1, \ldots, B_\nu\}
	\]
	will be referred to as \textit{blocks}. The size of their union
	\begin{equation*}
		B=B_1\dcup B_2\dcup \ldots \dcup B_\nu
	\end{equation*}
	is bounded from below by
	\begin{equation}\label{eq:Mnun}
		|B|= n-|X|-|B'|\ge (1-2\theta_\star)n -M \ge (1-3\theta_\star)n\,.
	\end{equation}
	By a {\it society} we mean a set consisting of $m$ blocks and we shall 
	write $\gS$ for the collection of all $\binom{\nu}{m}$ societies. 
	
	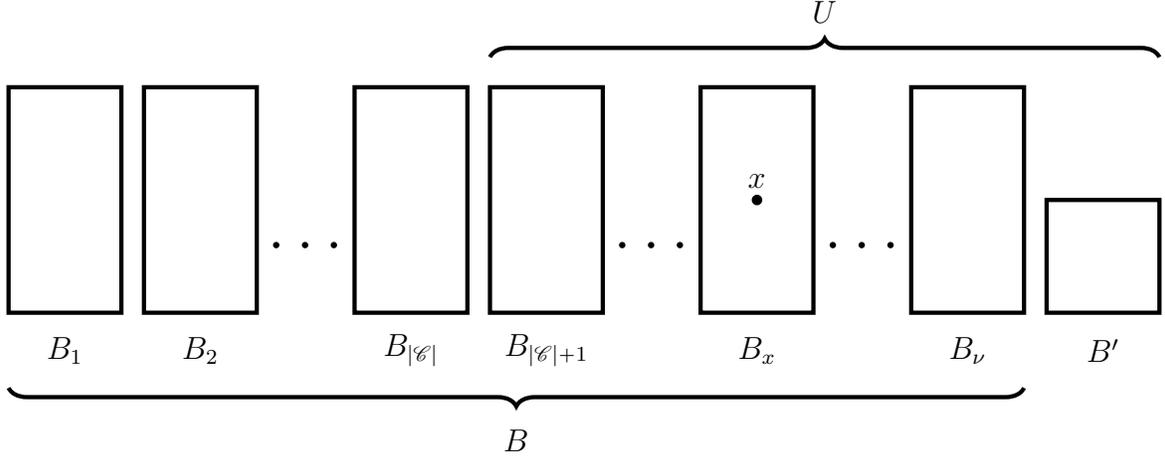
\begin{figure}
	\begin{tikzpicture}[scale=1]

	\def \w{1.5};
	\def \h{3}; 
	\def \ms{.3};
	\def \bs{1.3};  
	
	\coordinate (b1) at (-8,0);
	\coordinate (b2) at ($(b1) + (\w,0) + (\ms,0)$);
	\coordinate (b3) at ($(b2) + (\w,0) + (\bs,0)$);
	\coordinate (b4) at ($(b3) + (\w,0) + (\ms,0)$);
	\coordinate (b5) at ($(b4) + (\w,0) + (\bs,0)$);
	\coordinate (b6) at ($(b5) + (\w,0) + (\bs,0)$);
	\coordinate (b7) at ($(b6) + (\w,0) + (\ms,0)$);
	
	\coordinate (x) at ($(b5)+(.5*\w,.5*\h)$);
	
	\foreach \i in {1,...,6}{
		\draw [ultra thick] (b\i) rectangle +(\w,\h);
		}
	
		\draw [ultra thick] (b7) rectangle +(\w,.5*\h);

			\fill (x) circle (2pt);
			\node at ($(x)+(0,.25)$) {$x$};
			
			\node at ($(b1) + (.5*\w,-.5)$) {$B_1$};
			\node at ($(b2) + (.5*\w,-.5)$) {$B_2$};
			\node at ($(b3) + (.5*\w,-.5)$) {$B_{|\mathscr{C}|}$};
			\node at ($(b4) + (.5*\w,-.5)$) {$B_{|\mathscr{C}|+1}$};
			\node at ($(b5) + (.5*\w,-.5)$) {$B_x$};
			\node at ($(b6) + (.5*\w,-.5)$) {$B_\nu$};
			\node at ($(b7) + (.5*\w,-.5)$) {$B'$};
			
			\node at ($($(b2)+(\w,0)$)!.55!(b3)+(0,.3*\h)$) {{\Huge $\dots$}};
			\node at ($($(b4)+(\w,0)$)!.55!(b5)+(0,.3*\h)$) {{\Huge $\dots$}};
			\node at ($($(b5)+(\w,0)$)!.55!(b6)+(0,.3*\h)$) {{\Huge $\dots$}}; 
	
		\draw[ultra thick, decorate,decoration={brace,amplitude=7pt,mirror}] 
			($(b1)-(0,1)$) coordinate (a)  -- ($(b6)+(\w,-1)$) coordinate (c) ; 
			
			\draw[ultra thick, decorate,decoration={brace,amplitude=7pt}] 
			($(b4)+(0,\h)+(0,.4)$) coordinate (aa)  -- ($(b7)+(\w,\h)+(0,.4)$) coordinate (ca) ; 
	
	\node at ($($(b1)-(0,1)$)!.5! ($(b6)+(\w,-1)$)-(0,.7)$){$B$};
	\node at ($($(b4)+(0,\h)$) !.5!($(b7)+(\w,\h)$)+(0,1)$){$U$};

	\end{tikzpicture}
	\caption{Block partition of $V\setminus X$ for given $\ccC$.}
	\label{fig:block}
\end{figure}
	
	\begin{dfn}\label{dfn:1754}
		A society $\cS\in\gS$ with $S=\bigcup\cS$ is {\it useful}
		for a vertex $u\in U\setminus S$ if 
		\begin{enumerate}[label=\rmlabel]
				\item\label{it:1754a} $\delta_1 \left(H_u\left[ S\right]\right) 
						\ge\left(\frac{5}{9}+\frac\alpha 4\right)\frac{(mM) ^2}{2}$,
				\item\label{it:1754b} the family of graphs $\bigl\{R_{ux}[S]\colon x\in S\bigr\}$ 
					exemplifies Setup~\ref{setup:1746} for~$H_{u}[S]$ 
					with $(\alpha/4, \beta/2, \alpha/16)$
					here in place of $(\alpha, \beta, \mu)$ there,
				\item\label{it:1754c} and there are at least $\zeta_{\star\star} m^3M^3$ 
					triples in $S^3$ that are $\zeta_{\star\star}$-connectable in $H$ and
					$\zeta_{\star\star}$-bridges in $H_u[S]$ with respect to the robust 
					graphs in $\bigl\{R_{ux}[S]\colon x\in S\bigr\}$.
		\end{enumerate}
	\end{dfn}
	
	We shall argue that if a society $\cS$ is useful for many vertices in $U$, then 
	$U\cup\bigcup\cS$ spans $m+1$ disjoint paths from $\ccP$, contrary to the maximality
	of $|\ccC|$. The following claim provides a first step in this direction. 
	
 	\begin{claim}\label{clm:2053}
		If a society $\cS \in \gS$ is useful for a vertex $u\in U$ and $S=\bigcup\cS$, 
		then there exist $m+1$ vertex-disjoint $3$-uniform paths in $H_u[S]$ each of which has 
		$\frac{3}{4}(M+1)$ vertices and starts and ends with a triple which 
		is $\zeta_{\star\star}$-connectable in $H$.
	\end{claim}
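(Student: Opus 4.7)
The plan is to deduce Claim~\ref{clm:2053} from a direct application of Proposition~\ref{prop:1742} to the $3$-uniform link hypergraph $H_u[S]$, together with the family of robust subgraphs $\{R_{ux}[S]\colon x\in S\}$ provided by Definition~\ref{dfn:1754}\,\ref{it:1754b}. The parameters will be $\alpha/4$ and $\zeta_{\star\star}$ in place of $\alpha$ and $\xi$, which matches the choice of the arithmetic progression~$P$ made at the start of the proof of Proposition~\ref{prop:1625}; in particular, the desired path vertex-count $M'=\frac{3}{4}(M+1)$ lies in~$P$ by construction.

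First I would verify the hypotheses. Setup~\ref{setup:1746} for $H_u[S]$ with $\mu=\alpha/16=(\alpha/4)/4$ is exactly the content of clause~\ref{it:1754b} of Definition~\ref{dfn:1754}. Next I would take $\ccB\subseteq S^3$ to be the collection of triples from clause~\ref{it:1754c}, i.e., those that are simultaneously $\zeta_{\star\star}$-bridges in $H_u[S]$ (with respect to the fixed robust subgraphs) and $\zeta_{\star\star}$-connectable in~$H$. By~\ref{it:1754c} we have $|\ccB|\ge \zeta_{\star\star}m^3M^3=\zeta_{\star\star}|S|^3$, so the bridge-collection hypothesis of Proposition~\ref{prop:1742} is also satisfied.

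Applying Proposition~\ref{prop:1742} then produces vertex-disjoint paths in $H_u[S]$, each of $M'$ vertices, starting and ending with a triple from~$\ccB$, and covering all but at most $\zeta_{\star\star}|S|+M'$ vertices of $H_u[S]$. Every triple in $\ccB$ is $\zeta_{\star\star}$-connectable in~$H$ by construction, so the end-triples of these paths automatically meet the required connectability condition in~$H$. Recalling $m=M$ and $|S|=mM=M^2$, the number of paths produced is at least
\[
	\frac{M^2(1-\zeta_{\star\star})-M'}{M'}
	=\frac{4M^2(1-\zeta_{\star\star})}{3(M+1)}-1\,,
\]
which, for $\zeta_{\star\star}$ small and $M$ sufficiently large---both granted by the outer parameter choices in Proposition~\ref{prop:1625}---comfortably exceeds $M+1=m+1$. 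The only subtlety is parameter bookkeeping: Definition~\ref{dfn:1754} has been engineered so that its clauses translate precisely into the hypotheses of Proposition~\ref{prop:1742}, and no genuine obstacle arises beyond this matching together with the elementary counting above.
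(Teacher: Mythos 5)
Your proposal is correct and follows essentially the same route as the paper's own proof: apply Proposition~\ref{prop:1742} with parameters $(\alpha/4,\zeta_{\star\star})$ to $H_u[S]$ and the robust family $\{R_{ux}[S]\colon x\in S\}$, using the bridge collection $\Xi$ from Definition~\ref{dfn:1754}\,\ref{it:1754c} and $M'=\frac34(M+1)\in P$, then count paths. The paper's counting is marginally looser (it bounds the leftover by $\zeta_{\star\star}Mm+M$ rather than $\zeta_{\star\star}Mm+M'$ and then checks explicitly that $M=m\ge 15$ suffices), but your version of the arithmetic is equivalent.
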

	
	\begin{proof}
		We can apply Proposition~\ref{prop:1742} with $(\alpha/4, \zeta_{\star\star})$
		here in place of $(\alpha, \xi)$ there to the hypergraph $H_u[S]$,
		the family $\bigl\{R_{ux}[S]\colon x\in S\bigr\}$ of robust graphs, the set 
		\[
			\Xi=\bigl\{e\in S^3\colon \text{$e$ is a $\zeta_{\star\star}$-bridge in $H_u[S]$
					and $\zeta_{\star\star}$-connectable in $H$}\bigr\}
		\]
		of bridges and to $\frac34(M+1)$ here in place of $M$ there. This yields a 
		collection $\ccW$ of vertex-disjoint $3$-uniform $\frac34(M+1)$-vertex paths 
		in $H_u[S]$ with 
		\[
			\Big|S\setminus\bigcup\nolimits_{W\in\ccW}V(W)\Big|
			\le 
			\zeta_{\star\star}Mm+M
		\]
		such that every path in $\ccW$ starts and ends with a triple from $\Xi$.  
		In particular, the paths in $\ccW$ start and end with triples which are 
		$\zeta_{\star\star}$-connectable in $H$. 
		It remains to show $|\ccW|\ge m+1$, which follows from the fact that due 
		to $M=m\ge 15$ we have 
		\[
			|\ccW|
			\ge 
			\frac{(1-\zeta_{\star\star})Mm-M}{\frac34(M+1)}
			\ge 
			\frac{\frac9{10}Mm-M}{\frac45M}
			=
			\frac98m-\frac54
			>m\,. \qedhere
		\]
	\end{proof}
	
	To conclude the proof of Proposition~\ref{prop:1625} we need another result on useful 
	societies whose proof we postpone. 
	 
	\begin{claim}\label{clm:2042}
 		For every $u\in U\setminus U_{bad}$ there are at 
		least $\frac{2}{3}\left\vert \mathfrak{S}\right\vert$ useful societies.
		% that are  for $u$.
	\end{claim}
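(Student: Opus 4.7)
The plan is to show that for each fixed $u\in U\setminus\Ubad$, a uniformly random society $\cS\in\gS$ is useful for~$u$ with probability at least $2/3$; this immediately gives the claim. Writing $S=\bigcup\cS$, I will verify each of the three conditions of Definition~\ref{dfn:1754} with failure probability at most $1/9$ and then take a union bound. The common engine is Hoeffding's inequality for sampling $m$ blocks out of $\nu$ without replacement: any function $f(\cS)$ that changes by at most~$c$ when one block is swapped satisfies $\PP(|f(\cS)-\EE f(\cS)|>t)\leq 2\exp(-2t^2/(mc^2))$. Since $|B|\geq(1-3\theta_\star)n$ and $|S|/|B|=m/\nu$, expectations of natural counts restricted to~$S$ scale from their ambient values by the appropriate powers of $m/\nu$; the parameters in Definition~\ref{dfn:1754} are calibrated so that these expectations exceed the required thresholds with a constant factor of slack. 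The polynomial union bounds below are absorbed by taking $M$ large relative to $\log n$, which is permitted by Proposition~\ref{prop:1625}.

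For condition~(i), the pair-degree hypothesis together with $|X\cup B'|=O(\theta_\star n)$ yields $d_{H_u[B]}(v)\geq (5/9+\alpha-O(\theta_\star))n^2/2$ for every $v\in B$, so conditioning on $v\in S$ the expected value of $d_{H_u[S]}(v)$ is $(m/\nu)^2$ times this, comfortably above $(5/9+\alpha/4)|S|^2/2$. A block swap changes this degree by at most $O(mM^2)$, so Hoeffding plus a union bound over the at most $mM$ vertices of~$S$ gives~(i). For condition~(ii), I apply the same strategy to each $R_{ux}[S]$: its vertex count, edge count and boundary-edge count are Azuma-concentrated around expectations scaled from the Setup~\ref{setup:2335} parameters by $m/\nu$ or $(m/\nu)^2$, and $\alpha^3/18\ll\alpha/16$ delivers the required boundary bound with slack. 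For the $(\beta/2,\l)$-robustness, for each pair $y_1,y_2\in V(R_{ux}[S])$ the number of length-$\l$ $y_1y_2$-paths in $R_{ux}[S]$ concentrates around $\beta(|V(R_{ux})|\,m/\nu)^{\l-1}\approx\beta|V(R_{ux}[S])|^{\l-1}$, and a double union bound over $x\in S$ and $\{y_1,y_2\}\subseteq S$ completes~(ii).

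For condition~(iii), the assumption $u\notin\Ubad$ produces at least $n^3/8$ triples $(a,b,c)$ that are both $\zeta_{\star\star}$-connectable in~$H$ and $(2\zeta_{\star\star},X)$-bridges in~$\overline{H}_u-X$. For each such triple, writing $U_{ab}=\{v\in V\setminus(X\cup\{u\}):ab\in E(R_{uv})\}$, both $U_{ab}$ and $U_{bc}$ have size at least $2\zeta_{\star\star}(1-3\theta_\star)n$. Hoeffding gives $\PP(|U_{ab}\cap S|<\zeta_{\star\star}|S|)\leq \exp(-\Omega(\zeta_{\star\star}^2 m))$, so a union bound over pairs in $V^{(2)}$ shows that with probability at least $17/18$ every relevant pair remains $\zeta_{\star\star}$-connectable in $H_u[S]$; on this event, every original good triple whose three vertices land in~$S$ is a $\zeta_{\star\star}$-bridge in $H_u[S]$ and is still $\zeta_{\star\star}$-connectable in~$H$. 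A further Azuma concentration on the count of good triples in $S^3$, whose expectation is at least $(mM)^3/10$, produces at least $\zeta_{\star\star}(mM)^3$ of them with probability at least~$17/18$. The main obstacle is condition~(ii), where the $(\beta/2,\l)$-robustness requires simultaneously controlling the number of length-$\l$ paths between every pair in $V(R_{ux}[S])$; this is delicate but achievable precisely because $M$ can be taken large relative to $\log n$.
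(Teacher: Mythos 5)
Your high-level plan — showing that for a fixed $u\in U\setminus\Ubad$ a uniformly random society $\cS\in\gS$ is useful for $u$ with probability $\ge 2/3$ via concentration for sampling blocks without replacement, and then union-bounding over the three clauses of Definition~\ref{dfn:1754} — is the same as the paper's, and replacing the paper's weighted Janson inequality (Corollary~\ref{cor:1041}) by a block-swap Hoeffding/McDiarmid bound is a perfectly legitimate substitution: both yield $\exp(-\Omega(m))$ concentration for $k$-tuple counts.

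However, there is a genuine gap in how you handle the union bounds. You explicitly state (and repeat at the end) that ``the polynomial union bounds below are absorbed by taking $M$ large relative to $\log n$, which is permitted by Proposition~\ref{prop:1625}.'' This is false on two counts. First, Proposition~\ref{prop:1625} quantifies $M$ before $n$: it provides a constant $\zeta_{\star\star}(\alpha,\theta_\star)$ and \emph{arbitrarily large but fixed} natural numbers $M$, after which $n$ may be taken large; a choice like $M\gtrsim\log n$ is not available, and indeed the way Proposition~\ref{prop:1625} is invoked in the proof of Theorem~\ref{thm:main} places $M^{-1}$ strictly above $n_0^{-1}$ in the hierarchy. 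Second, your union bounds as stated really would need this — e.g.\ in part~(iii) you union-bound the event $|U_{ab}\cap S|<\zeta_{\star\star}|S|$ over ``pairs in $V^{(2)}$'', producing an $n^2\exp(-\Omega(m))$ bound that cannot be made $<1/18$ when $m$ is a constant and $n\to\infty$. The paper avoids this by only counting bad events that actually land inside $S$ and weighting each term by the probability of membership: e.g.\ $\PP(\gE_2)\le\PP(u\in S)+\sum_{x}\PP(x\in S)\,\PP(\text{bad}\mid x\in S)$, so the union bound costs $\sum_x\PP(x\in S)\le Mm$ rather than $n$; analogously $\sum_{ab}\PP(a,b\in S)\le (Mm)^2$, and similarly for triples. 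With that weighting every bad probability is $(Mm)^{O(1)}\exp(-\Omega(m))$, which vanishes for $M=m$ a sufficiently large constant. Your write-up for clause~(i) does gesture at this (``union bound over the at most $mM$ vertices of~$S$''), but the explicit appeal to $M\gg\log n$ — and the unweighted union bound over $V^{(2)}$ in~(iii) — show the mechanism has not been internalised. Until the union bounds are recast in this $\PP(\cdot\in S)$-weighted form, the proof does not close under the permitted quantifier order. There are also unexamined details in clause~(ii) (showing $(\beta/2,\ell)$-robustness requires controlling simultaneously the number of length-$\ell$ paths inside $S$ \emph{and} the random denominator $|V(R_{ux})\cap S|^{\ell-1}$, as in~\eqref{eq:2203}--\eqref{eq:1450}), but these are points of rigor rather than of concept; the union-bound issue is the substantive gap.
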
	
	
	Since we assume that~\eqref{eq:U-small} is false, Claim~\ref{clm:1717} yields 
	\[
		|U\setminus \Ubad|\ge (\theta_\star^2-8\zeta_{\star\star})n\ge \frac{\theta_\star^2}{2}n\,.
	\]
	By Claim~\ref{clm:2042} and double counting there exists a society $\cS$ which is useful
	for at least $\frac23|U\setminus \Ubad|\ge \theta_\star^2n/3$ vertices from $U$.  
	Next, Claim~\ref{clm:2053} allows us to choose for every such vertex $u$ a 
	collection $\ccW_u$ of $m+1$ paths in $H_u[S]$ each of which consists of $\frac34(M+1)$
	vertices and starts and ends with a triple that is $\zeta_{\star\star}$-connectable in $H$. 
	As there are no more than $(Mm)!$ possibilities for $\ccW_u$, there exist a collection 
	$\ccW$ of $3$-uniform paths on $S$ and a set $U'\subseteq U$ such that 
	$\ccW_u=\ccW$ for every $u\in U'$ and 
	\[
		 |U'|
		 \ge  
		 \frac{\theta_\star^2n}{3(Mm)!}\ge \frac{1}{4}(M-3)(m+1)\,,
	\]
	where the second inequality uses $n\gg M=m$. Now we augment every path in $\ccW$
	by inserting $(M-3)/4$ vertices from $U'$ in every fourth position 
	(see Figure~\ref{fig:aug}),
	thus obtaining~$m+1$ mutually disjoint $4$-uniform $M$-vertex paths. 
	As the $m+1$ paths obtained 
	in this way start and end with $\zeta_{\star\star}$-connectable triples, 
	the new paths are in $\ccP$. Thus, if we remove from $\ccC$ the paths whose 
	vertex sets belong to the useful society $\cS$ and add the newly constructed paths, 
	we obtain a collection of paths contradicting the maximality of $\ccC$. This 
	contradiction proves the validity of~\eqref{eq:U-small} and, hence, 
	concludes the proof of Proposition~\ref{prop:1625} based on Claim~\ref{clm:2042}.
	
	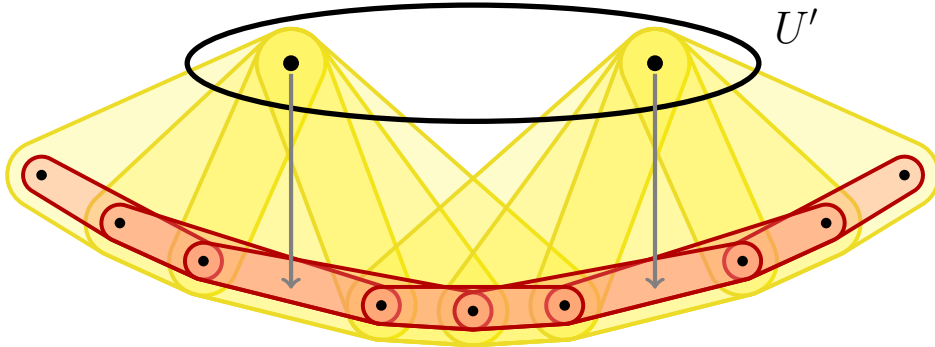
\begin{figure}[h!]
	\begin{tikzpicture}[scale=1]
		
	\def\an{3.5};
	\def\s{270-10*\an};
	\def\ra{10cm};
	
	\foreach \i in {0,...,10}{
		\coordinate (a\i) at (\s+2*\i*\an:\ra);
	}
	
	\coordinate (u1) at ($(a3) + (0,3)$);
	\coordinate (u2) at ($(a7) + (0,3)$);
				
	\redge{(a2)}{(a1)}{(a0)}{(u1)}{13pt}{1.5pt}{yellow!90!black}{yellow,opacity=0.2}
	\redge{(a4)}{(a2)}{(a1)}{(u1)}{13pt}{1.5pt}{yellow!90!black}{yellow,opacity=0.2};
	\redge{(a5)}{(a4)}{(a2)}{(u1)}{13pt}{1.5pt}{yellow!90!black}{yellow,opacity=0.2};
	\redge{(a6)}{(a5)}{(a4)}{(u1)}{13pt}{1.5pt}{yellow!90!black}{yellow,opacity=0.2};			
				
	\redge{(a6)}{(a5)}{(a4)}{(u2)}{13pt}{1.5pt}{yellow!90!black}{yellow,opacity=0.2};
	\redge{(a8)}{(a6)}{(a5)}{(u2)}{13pt}{1.5pt}{yellow!90!black}{yellow,opacity=0.2}
	\redge{(a9)}{(a8)}{(a6)}{(u2)}{13pt}{1.5pt}{yellow!90!black}{yellow,opacity=0.2};
	\redge{(a10)}{(a9)}{(a8)}{(u2)}{13pt}{1.5pt}{yellow!90!black}{yellow,opacity=0.2};
	
	\qedge{(a2)}{(a1)}{(a0)}{7pt}{1.5pt}{red!70!black}{red!50!white,opacity=0.3};
	\qedge{(a10)}{(a9)}{(a8)}{7pt}{1.5pt}{red!70!black}{red!50!white,opacity=0.3};
	
	\qedge{(a4)}{(a2)}{(a1)}{7pt}{1.5pt}{red!70!black}{red!50!white,opacity=0.3};
	\qedge{(a9)}{(a8)}{(a6)}{7pt}{1.5pt}{red!70!black}{red!50!white,opacity=0.3};
	
	\qedge{(a5)}{(a4)}{(a2)}{7pt}{1.5pt}{red!70!black}{red!50!white,opacity=0.3};
	\qedge{(a8)}{(a6)}{(a5)}{7pt}{1.5pt}{red!70!black}{red!50!white,opacity=0.3};
	
	\qedge{(a6)}{(a5)}{(a4)}{7pt}{1.5pt}{red!70!black}{red!50!white,opacity=0.3};				

	\node at (4.3,-6.2) {\Large $U'$};
	
	\draw[black, line width=2pt] ($(u1)!.5!(u2)$) ellipse (3.8cm and 22pt);	
	
	\draw [ultra thick, black!50!white, shorten <= 4pt, ->] (u1) -- (a3); 
	\draw [ultra thick, black!50!white, shorten <= 4pt, ->] (u2) -- (a7); 				
				
	\foreach \i in {1,2}{
		\fill (u\i) circle (3pt);
	}			
	
	\foreach \i in {0,1,2,4,5,6,8,9,10}{
		\fill (a\i) circle (2pt);
	}

	\end{tikzpicture}
	\caption {Augmenting a $\frac 34 (M+1)$-vertex $3$-uniform path to an $M$-vertex $4$-uniform path.}
	\label{fig:aug}
\end{figure}

	\subsection*{Proof of Claim~\ref{clm:2042}.}
	Fix a vertex $u\in U\setminus \Ubad$. We shall prove that the probability that 
	a society $\cS\in\gS$ chosen uniformly at random fails to be useful for $u$ is 
	$\exp\bigl(-\Omega(m)\bigr)$, where the implicit constant only depends 
	on $\alpha$, $\beta$, $\ell$, $\theta_\star$ and $\zeta_{\star\star}$. 
	So a sufficiently large choice of~$M=m$ allows us to push this probability below $1/3$, 
	as desired. 
	
	We will apply Corollary~\ref{cor:1041} several times to 
	the partition 
	\[
		V=B_1\dcup \ldots \dcup B_{\nu} \dcup (B'\dcup X)
	\]
	or to a partition derived from it by relocating up to three of the blocks $B_1, \ldots, B_\nu$
	to the exceptional set. By~\eqref{eq:Mnun} we can take~$\eta=4\theta_\star$ in all these applications.
	It will be convenient to write $B_x$ for the block containing a vertex $x\in B$. 
	
	We begin by estimating the probability of the unfortunate event $\gE_1$ that the minimum 
	vertex-degree condition in Definition~\ref{dfn:1754}\,\ref{it:1754a} fails for our random 
	society, i.e.,
	\[
		\gE_1=\left\{\cS\in \gS\colon 
			\delta_1(H_u[S]) < \left(\frac{5}{9}+\frac\alpha 4\right)\frac{M^2m^2}{2}\right\}\,.
	\]
	Since $u$ is isolated in $H_u$, this event occurs whenever $u\in S$ and we have 
	\begin{align}
		\PP(\gE_1)& \le \PP(u\in S) \notag \\
		&+
		\sum_{x\in B\setminus\{u\}} 
		\PP(x\in S)\,
		\PP\left(e_{H_{ux}}(S\setminus B_x) <
			\left(\frac59+\frac\alpha2\right)\frac{M^2(m-1)^2}{2}\copr x\in S\right)\,, \label{eq:1805}
	\end{align}
	where the reason for excluding the set $B_x$ is that conditioned on $x\in S$ 
	the random variable $e_{H_{ux}}(S\setminus B_x)$ is more pleasant to work with 
	than $e_{H_{ux}}(S)$.
	For a fixed vertex $x\in B\setminus \{u\}$ we want to derive an upper bound 
	on the probability summed in~\eqref{eq:1805} by applying Corollary~\ref{cor:1041}\,\ref{it:1041b}
	with $k=2$ to the graph $H_{ux}$. Our assumption on $H$ yields  
	\[
		e(H_{ux})\ge \left(\frac 59+\alpha \right)\frac{n^2}2
	\]
	and given the event $x\in S$, or equivalently $B_x\in \cS$, 
	the variable $e_{H_{ux}}(S\setminus B_x)$ is determined by a random selection of $m-1$
	blocks from $\ccB\setminus\{B_x\}$. So by Corollary~\ref{cor:1041}\,\ref{it:1041b} with 
	$m-1$ in place of $m$ and $\xi=\alpha/2$ we obtain
	\[
		\PP\left(e_{H_{ux}}(S\setminus B_x) <
			\left(\frac59+\frac\alpha2\right)\frac{M^2(m-1)^2}{2}\copr x\in S \right)
		\le
		\exp\bigl(-\Omega(m)\bigr)\,.
	\]
	Together with~\eqref{eq:1805} this yields 
	\[
		\PP(\gE_1)
		\le 
		\frac{m}{\nu}+ \Bigl(\sum_{x\in B\setminus\{u\}} \PP(x\in S)\Bigr)\exp\bigl(-\Omega(m)\bigr)
		\le 
		\frac{m}{\nu} + Mm \exp\bigl(-\Omega(m)\bigr)
	\]
	and for sufficiently large $n\gg M=m$ this shows that 
	\begin{equation}\label{eq:2255}
		 \PP(\gE_1) \le \exp\bigl(-\Omega(m)\bigr)\,.
	\end{equation}

	Proceeding with the second item in Definition~\ref{dfn:1754} we let $\gE_2$ be the bad 
	event that, for our fixed vertex $u\in U\setminus \Ubad$, the family of 
	graphs $\bigl\{R_{ux}[S]\colon x\in S\bigr\}$ fails to 
	exemplify Setup~\ref{setup:1746} for~$H_{u}[S]$ with $(\alpha/4, \beta/2, \alpha/16)$ 
	here in place of $(\alpha, \beta, \mu)$ there.
	We analyse~$\gE_2$ by considering for every fixed $x\in B\setminus \{u\}$ the event 
	$\gE'_2(x)$ that $R_{ux}[S]$ fails to be $(\beta/2, \ell)$-robust and the event $\gE''_2(x)$
	that one of the estimates required by Setup~\ref{setup:1746} fails. Observe that in the 
	present context these estimates read as follows:
	\begin{enumerate}[itemsep=.7em]
			\item[$\bullet$] $|V(R_{ux})\cap S|\ge \bigl(\frac23+\frac\alpha 8\bigr)Mm$,
			\item[$\bullet$] $e_{H_{ux}}\big(V(R_{ux})\cap S, S\setminus V(R_{ux})\big)\le 
				\frac 1{16}\alpha M^2m^2$, and 		
			\item[$\bullet$] $e(R_{ux})\ge 
				\left(\frac{5}{9}+\frac{\alpha}{8}\right)\frac{M^2m^2}{2}
					-\frac{(|S\setminus V(R_{ux})|)^2}{2}
				\ge\left(\frac49+\frac16\alpha\right)\frac{M^2m^2}{2}$.
	\end{enumerate}
	
	Consider a fixed vertex $x\in B\setminus\{u\}$. For any two distinct 
	vertices $y, z\in V(R_{ux})$ we let $P_{xyz}\subseteq V^{\ell-1}$ 
	be a set of $(\ell-1)$-tuples encoding the inner vertices of the $\ell$-edge 
	paths in~$R_{ux}$ from~$y$ to~$z$ and we let $\gP_{xyz}$ be the event that
	$|P_{xyz}\cap S^{\ell-1}|\le \frac12\beta|V(R_{ux})\cap S|^{\ell-1}$.
	By the law of total probability we have 
	\begin{equation}\label{eq:2120}
		\PP(\gE'_2(x) \coprn x\in S)
		\le 
		\sum_{yz\in V(R_{ux})^{(2)}} \PP(y, z\in S)\, \PP(\gP_{xyz} \coprn x, y, z\in S)\,.
	\end{equation}

	Let us look at a fixed pair $yz\in V(R_{ux})^{(2)}$. Since $R_{ux}$ 
	is $(\beta, \ell)$-robust, we know that $|P_{xyz}|\ge \beta |V(R_{ux})|^{\ell-1}$
	and, therefore, the set 
	\[
		P'_{xyz}=P_{xyz}\cap \bigl(V\setminus (B_x\cup B_y\cup B_z)\bigr)^{\ell-1}
	\]
	corresponding to those paths in $P_{xyz}$ that avoid $B_x\cup B_y\cup B_z$ satisfies   
	\[
		|P'_{xyz}|\ge \frac56\beta (\rho n)^{\ell-1}\,,
	\]
	where $\rho=|V(R_{ux})|/n>2/3$. For $d=\big|\{B_x, B_y, B_z\}\big|\in [3]$
	we deduce from Corollary~\ref{cor:1041}\,\ref{it:1041a} (by moving $B_x$, $B_y$, and $B_z$
	into the exceptional set) that 
	\begin{multline}\label{eq:2203}
		\PP\bigl(|P'_{xyz}\cap S^{\ell-1}|\le \tfrac23\beta (\rho Mm)^{\ell-1} 
			\bcopr x, y, z\in S\bigr)\\
		\le
		\PP\bigl(|P'_{xyz}\cap S^{\ell-1}|\le \tfrac34\beta \bigl(\rho M(m-d)\bigr)^{\ell-1} 
			\bcopr x, y, z\in S\bigr)
		\le
		\exp\bigl(-\Omega(m)\bigr)\,. 
	\end{multline}
	Similarly, Corollary~\ref{cor:1041}\,\ref{it:1041a}
	applied with $k=1$ to the set $\Lambda=V(R_{ux})\setminus (B_x\cup B_y\cup B_z)$ yields 
	\[
			\PP\left(\bigg|\frac{|\Lambda\cap S|}{M(m-d)}-\frac{|\Lambda|}n\bigg|
				\ge \frac 1{9\ell} \copr x, y, z\in S \right) 
		\le 
		\exp\bigl(-\Omega(m)\bigr)\,.
	\]
	In particular, the random variable $\rho_S=|V(R_{ux})\cap S|/(Mm)$ satisfies 
	\begin{equation}\label{eq:2213}
		\PP\left(|\rho_S-\rho|\ge \frac1{8\ell}\copr x, y, z\in S\right)
		\le 
		\exp\bigl(-\Omega(m)\bigr)\,.
	\end{equation}
	Now if both of the likely events 
	$|P'_{xyz}\cap S^{\ell-1}| > \tfrac23\beta (\rho Mm)^{\ell-1}$
	and $|\rho-\rho_S|<1/(8\ell)$ hold, then $\rho_S>1/2$ and  

	\begin{equation}\label{eq:1450}
		\frac{|P'_{xyz}\cap S^{\ell-1}|}{\beta (Mm)^{\ell-1}}
		>
		\frac 23\rho^{\ell-1}
		>
		\frac 23\left(\rho_S-\frac 1{8\ell}\right)^{\ell-1}
		\ge
		\frac 23\rho_S^{\ell-1}\left(1-\frac 1{4\ell}\right)^{\ell}
		\ge
		\frac 12\rho_S^{\ell-1}\,.
	\end{equation}
	Adding~\eqref{eq:2203} and~\eqref{eq:2213} we deduce from~\eqref{eq:1450} that
	\[
		\PP\bigl(|P'_{xyz}\cap S^{\ell-1}|\le \tfrac12\beta(\rho_S Mm)^{\ell-1}
			\bcopr x, y, z\in S\bigr)
		\le
		\exp\bigl(-\Omega(m)\bigr)\,,
	\]
	whence
	\[
		\PP\bigl(\gP_{xyz})\le \exp\bigl(-\Omega(m)\bigr)\,.
	\]

	As this holds for every pair $yz\in V(R_{ux})^{(2)}$ we conclude from~\eqref{eq:2120}
	that
	\[ 
		\PP(\gE'_2(x)\coprn x\in S)
		\le 
		\Bigl(\sum_{yz\in V(R_{ux})^{(2)}} \PP(y, z\in S)\Bigr) \exp\bigl(-\Omega(m)\bigr)
		\le 
		\binom{Mm}2\exp\bigl(-\Omega(m)\bigr) \,.
	\]%end{align*}
	Summarising the argument so far, we have proved 
	\[
		\PP\bigl(\gE'_2(x)\bcopr x\in S\bigr)\le \exp\bigl(-\Omega(m)\bigr)
	\]
	for every $x\in B\setminus \{u\}$. 
	Similar but easier considerations based on Corollary~\ref{cor:1041} show that 
	\[
		\PP\bigl(\gE''_2(x)\bcopr x\in S\bigr)\le \exp\bigl(-\Omega(m)\bigr)
	\]
	holds as well and we leave the details of this derivation to the reader. Returning 
	now to the event $\gE_2$ that the family $\bigl\{R_{ux}[S]\colon x\in S\bigr\}$ fails to 
	exemplify Setup~\ref{setup:1746} for~$H_{u}[S]$ with $(\alpha/4, \beta/2, \alpha/16)$ 
	here in place of $(\alpha, \beta, \mu)$ there we obtain
	\[
		\PP(\gE_2)
		\le 
		\PP(u\in S)+\sum_{x\in B\setminus\{u\}}\PP\bigl(\gE'_2(x)\cup\gE''_2(x)\bcopr x\in S\bigr)
		\le 
		\frac m\nu+Mm\exp\bigl(-\Omega(m)\bigr)\,,
	\]
	i.e.,
	\begin{equation}\label{eq:2257}
		 \PP(\gE_2) \le \exp\bigl(-\Omega(m)\bigr)\,.
	\end{equation}

	It remains to analyse the adverse event $\gE_3$ that the third clause of 
	Definition~\ref{dfn:1754} fails. 
	Consider any pair of vertices $yz\in (B\setminus\{u\})^2$ which is 
	$(2\zeta_{\star\star}, X)$-connectable in $\overline{H}_u-X$. Recall that 
	this means that a certain set $U_{yz}\subseteq V\setminus X$ of witnesses 
	definable from the family of robust 
	graphs $\bigl\{R_{uv}-X\colon v\in V\setminus (X\cup\{u\})\bigr\}$
	satisfies $|U_{yz}|\ge 2\zeta_{\star\star}|V\setminus X|$ and, 
	hence, $|U_{yz}\setminus (B_y\cup B_z)|\ge (3/2)\zeta_{\star\star}|V\setminus X|$. 
	Corollary~\ref{cor:1041}\,\ref{it:1041a} applied to $V\setminus X$, the block 
	partition with exceptional set $B'\cup B_y\cup B_z$,
	and with the constants 
	$\eta=\zeta_{\star\star}^2$, $\xi=\zeta_{\star\star}/4$
	shows that 
	\[
		\PP\bigl(|(U_{yz}\cap S)\setminus (B_y\cup B_z)|
			\le \zeta_{\star\star}Mm \bcopr y, z\in S \bigr)
		\le
		\exp\bigl(-\Omega(m)\bigr)\,.
	\]
	As this holds for every $(2\zeta_{\star\star}, X)$-connectable pair $yz$, it follows 
	in the usual way that 
	\begin{multline*}
		\PP(\neg \gE_2 \text{ and some $(2\zeta_{\star\star}, X)$-connectable pair
			belonging to $S^{(2)}$} \\
			\text{ is not $\zeta_{\star\star}$-connectable in $H_u[S]$}) 
		\le
		\exp\bigl(-\Omega(m)\bigr)\,,
	\end{multline*}
	where the reason for adding the conjunct $\neg \gE_2$ is that it makes the notion 
	of connectable pairs in $H_u[S]$ meaningful. Due to the definition of bridges in 
	terms of connectable pairs it follows that 
	\begin{multline}\label{eq:2246}
		\PP(\neg \gE_2 \text{ and some $(2\zeta_{\star\star}, X)$-bridge belonging to $S^3$} \\
		\text{ is not a $\zeta_{\star\star}$-bridge in $H_u[S]$}) 
		\le
		\exp\bigl(-\Omega(m)\bigr)\,.
	\end{multline}
	Since $u\not\in\Ubad$ the set 
	\[
		\Phi_u=\bigl\{e\in V^3\colon \text{$e$ is a $\zeta_{\star\star}$-connectable 
			$(2\zeta_{\star\star}, X)$-bridge}\bigr\}
	\]
	has size $|\Phi_u|\ge n^3/8$ and a final application of 
	Corollary~\ref{cor:1041}\,\ref{it:1041a} with $k=3$ 
	shows that this set scales appropriately to $S$ in the sense that 
	\[
		\PP\left(|\Phi_u\cap S^3|\le M^3m^3/16\right)
		\le
		\exp\bigl(-\Omega(m)\bigr)\,.
	\]
	Together with~\eqref{eq:2246} this proves 
	\[
		\PP(\neg \gE_2\,\,\&\,\,\gE_3)
		\le
		\exp\bigl(-\Omega(m)\bigr)
	\]
	and by adding~\eqref{eq:2255} as well as~\eqref{eq:2257} we finally obtain
	\[
		\PP(\text{$\cS$ is not useful for $u$})
		\le 
		\exp\bigl(-\Omega(m)\bigr)\,.
	\]
	This concludes the proof of Claim~\ref{clm:2042} and, hence, the proof of 
	Proposition~\ref{prop:1625}.
\end{proof}

\section{The proof of the main result}
\label{sec:main-pf}

In this section we give the routine derivation of Theorem~\ref{thm:main} from the 
results in \hbox{Sections~\ref{conn}\,--\,\ref{sec:long_path}}. 
\begin{proof}[Proof of Theorem~\ref{thm:main}] We can assume 
that $\alpha>0$ is sufficiently small. Now we choose 
an appropriate hierarchy of constants 
\[
	\alpha\gg \beta, \ell^{-1} \gg \zeta_{\star}\gg \theta_\star\gg \zeta_{\star\star} 
	\gg \theta_{\star\star} \gg M^{-1} \gg n_0^{-1}\,.
\]
We recall that Corollary~\ref{all4} yields four natural numbers 
$\ell_1, \ell_2, \ell_3, \ell_4\le 50\ell$.

Let $H=(V, E)$ be a $4$-uniform hypergraph on $|V|=n\ge n_0$ vertices satisfying the
minimum pair degree condition
$
	\delta_2(H)\ge\big(\frac59+\alpha\big)\frac{n^2}2\,.
$ 
We need to construct a Hamiltonian cycle in $H$. 
Appealing to Proposition~\ref{prop:robust} with $\mu=\alpha^3/18$ we choose
for every pair $uv\in V^{(2)}$ a $(\beta, \ell)$-robust subgraph $R_{uv}\subseteq H_{uv}$   
of its link graph. Notice that $H$ and the family of robust 
graphs 
\[
	\{R_{uv}\colon uv\in V^{(2)}\}
\]
realise Setup~\ref{setup:2335}. Proposition~\ref{prop:reservoir} allows us to choose a 
reservoir set $\cR$ with $|\cR|\le \theta_{\star}^2n$ which by 
Corollary~\ref{lem:use-reservoir} has the property that if a subset~$\cR'\subseteq \cR$
with $|\cR'|\le \theta_{\star\star}^2n$ has ``already been used'', then for every $i\in [4]$
we can still connect any two disjoint $\zeta_{\star\star}$-connectable triples 
by a path through $\cR\setminus \cR'$ having $\ell_i$ inner vertices.  
Next we apply Proposition~\ref{prop:absorbingP} to obtain  
an (absorbing) path $P_A\subseteq H-\cR$ such that
\begin{enumerate}[label=\rmlabel]
	\item\label{it:QA1} $ |V(P_A)|\le \theta_\star n$,
	\item\label{it:QA2} the end-triples of $P_A$ are $\zeta_\star$-connectable, 
	\item\label{it:QA3} and for every subset $Z\subseteq V\setminus V(P_A)$
		with $|Z|\le 2\theta_\star^2n$ and $|Z|\equiv0 \pmod 4$, there is a path $Q\subseteq H$ 
		with the same end-triples as $P_A$ and $V(Q)=V(P_A)\cup Z$.
\end{enumerate}
As the set $X=\cR\cup V(P_A)$ satisfies 
$|X|\le (\theta_\star+\theta_{\star}^2)n\le 2\theta_\star n$, Proposition~\ref{prop:1625}
yields a collection $\ccC$ of $M$-vertex paths starting and ending 
with $\zeta_{\star\star}$-connectable triples such that the set 
\[
	J=V\setminus \Bigl(V(P_A)\cup \cR\cup \bigcup\nolimits_{P\in \ccC}V(P)\Bigr)
\]
of uncovered vertices satisfies $|J|\le \theta_{\star}^2n$. 

Now we want to form an almost 
spanning cycle in $H$ by connecting the paths in $\ccC$ and~$P_A$ through the reservoir.
For each of the first $|\ccC|$ of these connections we want use $\ell_1$ vertices 
from the reservoir, which altogether requires 
\[
	\ell_1|\ccC|\le \frac{50\ell n}{M}\le \theta_{\star\star}^2n
\]
vertices from the reservoir. In other words, there arises no problem if we choose these 
connections one by one, thus creating a path $T$ possessing $|\ccC|(\ell_1+M)+|V(P_A)|$
vertices. Moreover, the set $V(T)\cap \cR$ of used vertices is so small that we can still 
make a last connection to close the desired cycle. For this last connection we use $\ell_i$
inner vertices, where $i\in [4]$ is determined in such a way 
that $i\equiv n-|V(T)|\pmod{4}$. In this manner, we obtain a cycle $C$ containing the absorbing 
path $P_A$ such that the set $Z=V\setminus V(C)$ of left-over vertices satisfies 
\[
	|Z|\equiv n-|V(C)| \equiv n-|V(T)|-\ell_i\equiv i-\ell_i\equiv 0 \pmod{4}
\]
as well as 
\[
	|Z|
	=
	|Z\setminus \cR|+|Z\cap \cR|
	\le 
	|J|+|\cR| 
	\le 
	2\theta_\star^2 n\,.
\]
So by property~\ref{it:QA3} of the absorbing path we can absorb $Z$ into $P_A$, 
thus arriving at the desired Hamiltonian cycle. Thereby Theorem~\ref{thm:main} is proved. 
\end{proof}

\appendix

\section{Weighted Janson Inequality}

In the proof of Claim~\ref{clm:2042} we use a probabilistic concentration result 
that follows from the following weighted variant of Janson's inequality.

%\vbox{
\begin{lemma}[Weighted Janson Inequality]\label{lem:2726}
	For a nonempty set $V$ and $p\in [0, 1]$ let $V_p$ be the binomial 
	subset of $V$ including every element of $V$ independently and uniformly at random 
	with probability $p$. Let $w\colon \powerset(V)\longrightarrow\RR_{\ge 0}$ be a 
	weight function and let 
	\[
		X=\sum_{A\in\powerset(V)}w(A)\Ind_{A\subseteq V_p}
	\]
	be the random variable giving the total weight of $\powerset(V_p)$. Setting
	\[
		\Delta
		=
		\sum_{\substack{A, B\in\powerset(V)\\ A\cap B\ne \emptyset}}w(A)\,w(B)\,\PP(A\cup B\subseteq V_p)
	\]
	we have 
	\[
		\PP\bigr(X\le \EE X - t\bigr) 
		\le 
		\exp\left(-\frac{t^2}{2\Delta}\right)
	\]
	for every $t\in [0, \EE X]$. 
\end{lemma}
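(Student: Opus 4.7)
I would reduce the weighted statement to the classical unweighted Janson inequality. The usual first move is the Chernoff--Markov trick: for any $\lambda \ge 0$,
\[
	\PP(X \le \EE X - t) \le e^{\lambda(\EE X - t)}\, \EE e^{-\lambda X},
\]
so it would suffice to establish the negative-exponential moment bound
\[
	\EE e^{-\lambda X} \le \exp\bigl(-\lambda \EE X + \tfrac{1}{2} \lambda^2 \Delta\bigr), \qquad \lambda \ge 0,
\]
because optimising at $\lambda = t/\Delta$ then yields the claimed tail estimate.

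To prove the moment bound, I would first invoke continuity in $w$ (the right-hand side is continuous in the weights, the left-hand side is a finite mixture of exponentials in them) to assume that all weights $w(A)$ are rational, and then clear a common denominator $q$, thereby rescaling $X$ to $qX$ and $\Delta$ to $q^2\Delta$. This reduces matters to the case where every $w(A) \in \NN$. In that case I would ``unfold'' $X$ into an unweighted sum of indicators: for each $A$ with $w(A) \ge 1$ and each $k \in \{1, \dots, w(A)\}$ introduce the event $B_{A,k} := \{A \subseteq V_p\}$ (distinct labels, same underlying event), so that $X = \sum_{(A,k)} \Ind_{B_{A,k}}$. Applying the classical lower-tail Janson inequality (see, e.g., Janson--{\L}uczak--Ruci\'nski, \textit{Random Graphs}, Thm.~2.18, or Alon--Spencer, Thm.~8.1.3) to this indexed family of events gives $\PP(X \le \EE X - t) \le \exp(-t^2/(2\Psi))$, where $\Psi$ is the sum, over ordered pairs of labels $(A,k), (A',k')$ with $A \cap A' \ne \emptyset$ (including the diagonal), of $\PP(A \cup A' \subseteq V_p)$. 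A bookkeeping of duplicates identifies
\[
	\Psi = \sum_{A, A': A \cap A' \ne \emptyset} w(A)\, w(A')\, \PP(A \cup A' \subseteq V_p) = \Delta,
\]
so the inequality becomes precisely the desired one.

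The main non-trivial ingredient is the classical unweighted Janson inequality itself, which I would cite as a black box. A self-contained route avoiding this citation would work directly with the exponential moment: writing $g(\lambda) = \log \EE e^{-\lambda X}$ one has $g(0) = 0$, $g'(0) = -\EE X$ and $g''(\lambda) = \mathrm{Var}_{P_\lambda}(X)$ under the tilted measure $P_\lambda \propto e^{-\lambda X}\, P$, so it would suffice to show $\mathrm{Var}_{P_\lambda}(X) \le \Delta$ uniformly in $\lambda \ge 0$. Expanding the variance and splitting pairs $(A,B)$ according to whether $A \cap B \ne \emptyset$ or $A \cap B = \emptyset$, the former contribution is handled by Harris--FKG (since $\Ind_A \Ind_B$ is increasing and $e^{-\lambda X}$ decreasing in $V_p$, the tilted expectation is at most the untilted one), while the latter would require a negative-correlation estimate for the log-submodular tilt $e^{-\lambda X}$ (log-submodular because $X$ is a non-negative combination of monomials, hence supermodular). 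The delicate off-diagonal covariance bound---typically an Ahlswede--Daykin four-function inequality---would be the main obstacle in this direct route, and this is precisely the technicality that the duplication reduction sidesteps.
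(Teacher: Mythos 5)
Your route is correct but genuinely different from the paper's. You reduce the weighted statement to the classical unweighted Janson inequality: approximate the weights by rationals, clear a common denominator, and replace each $A$ with $w(A)$ identical copies, so that the ordered-pair overlap count in the unweighted inequality collapses exactly to the weighted $\Delta$. The paper instead re-runs the Janson proof from scratch with the weights in place: it sets $\Psi(s)=\EE[\eu^{-sX}]$, expands $-\Psi'(s)$ as a weighted sum of conditional expectations, splits $X=Y_A+Z_A$ into the sub-sums over neighbours and non-neighbours of $A$, applies FKG together with $\eu^{-x}\ge1-x$ to deduce $-(\log\Psi)'(s)\ge\EE X-s\Delta$, and integrates. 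What the paper's route buys is self-containment and no approximation step; yours is shorter modulo the citation. One patch is needed in your write-up: you correctly set up the reduction at the level of the exponential-moment bound $\EE\,\eu^{-\lambda X}\le\exp\bigl(-\lambda\EE X+\tfrac12\lambda^2\Delta\bigr)$---that quantity (unlike a tail probability of a discrete variable) is continuous in $w$ and scales cleanly when clearing denominators---but you then invoke the \emph{tail} form of the classical inequality for integer weights, rather than the moment-bound form that your reduction actually requires. Either cite the moment-bound intermediate step directly (both references you name establish it en route to the tail bound), or carry out the limiting argument for the tail more carefully. Your alternative sketch---bounding the tilted variance $g''(\lambda)\le\Delta$ uniformly---is, as you suspect, harder than necessary: it is strictly stronger than what the paper proves (the paper bounds only $g'(\lambda)$), and the linearisation via $\eu^{-x}\ge 1-x$ is precisely what lets the paper get by with FKG alone rather than a four-functions-type correlation inequality.
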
%}

It is straightforward to check that Janson's original proof (see e.g.~\cite{JLR00}) 
extends to this weighted setting but for the sake of completeness we give the details.

\begin{proof}
		Let $\Psi\colon \RR_{\ge_0}\longrightarrow \RR_{>0}$ be the 
		function $s\longmapsto \EE[\eu^{-sX}]$. Clearly, $\Psi$ is differentiable with 
		the derivative 
		\begin{equation}\label{eq:1407}
			-\Psi'(s)
			=
			\EE[X\eu^{-sX}]
			=
			\sum_{A\subseteq V} w(A)\,\PP(A\subseteq V_p)\,\EE[\eu^{-sX}\coprn A\subseteq V_p]\,.
		\end{equation}
		For every $A\subseteq V$ we split $X=Y_A+Z_A$, where
		\[
			Y_A=\sum_{A\cap B\ne \varnothing}w(B)\Ind_{B\subseteq V_p}
			\qand
			Z_A=\sum_{A\cap B = \varnothing}w(B)\Ind_{B\subseteq V_p}\,.
		\]
		Now the FKG inequality yields 
		\[
			\EE[\eu^{-sX}\coprn A\subseteq V_p]
			\ge
			\EE[\eu^{-sY_A}\coprn A\subseteq V_p]\cdot\EE[\eu^{-sZ_A}\coprn A\subseteq V_p]\,,
		\]
		where in view of the independence of $A\subseteq V_p$ and $Z_A$ the second factor
		is at least $\Psi(s)$. Applying the trivial estimate $\eu^{-x}\ge 1-x$ to the first factor 
		we obtain
		\[
			\EE[\eu^{-sX}\coprn A\subseteq V_p]
			\ge 
			\EE[1-sY_A\coprn A\subseteq V_p]\cdot\Psi(s)
		\]
		for every $A\subseteq V$ and by plugging this into~\eqref{eq:1407} we arrive at 
		\begin{align*}
			-\frac{\Psi'(s)}{\Psi(s)}
			&\ge 
			\sum_{A\subseteq V} w(A)\,\PP(A\subseteq V_p)\,\EE[1-sY_A\coprn A\subseteq V_p] \\
			&=
			\sum_{A\subseteq V} w(A)\,\PP(A\subseteq V_p)
			-
			s\sum_{A\cap B\ne \varnothing} w(A)\,w(B)\,\PP(A\cup B\subseteq V_p) \\
			&=
			\EE X-s\Delta\,.
		\end{align*}
		Integrating over $s$ and taking $\Psi(0)=1$ into account we conclude 
		\[
			\log\bigl(\Psi(u)\bigr)
			=
			\int_0^u \frac{\Psi'(s)}{\Psi(s)}\,\mathrm{d} s
			\le
			\int_0^u \bigl(s\Delta-\EE X\bigr)\,\mathrm{d} s
			=
			\frac{u^2\Delta}2 - u\EE X
		\]
		for every $u\in \RR_{\ge 0}$. Finally, Markov's inequality implies 
		\begin{align*}
			\PP\bigl(X\le \EE X-t\bigr)
			&=
			\PP\bigl(\eu^{-uX}\ge \eu^{u(t-\EE X)}\bigr)
			\le
			\exp\bigl(u(\EE X-t)\bigr)\,\EE[\eu^{-uX}] \\
			&\le
			\exp\bigl(u(\EE X-t)+u^2\Delta/2 - u\EE X\bigr)
			=
			\exp(u^2\Delta/2-tu)
		\end{align*}
		for every $u\in\RR_{\ge 0}$ and the optimal choice $u=\frac t\Delta$
		discloses 
		\[
			\PP\bigl(X\le \EE X-t\bigr)
			\le 
			\exp\left(-\frac{t^2}{2\Delta}\right)\,. \qedhere
		\]
\end{proof}

For bounded weight functions we deduce the following version. 

\begin{cor} \label{cor:1840}
	Suppose that $|V|\ge m\ge k\ge 1$, where $V$ is a finite set and $k$ is an integer.
	For $p=m/|V|$ let $V_p\subseteq V$ be the binomial subset of $V$ including 
	every element independently and uniformly at random with probability $p$.
	If $w\colon V^{(k)}\longrightarrow [0, 1]$ denotes a bounded weight function, then 
	the random variable $X=\sum_{A\in V^{(k)}}w(A)\Ind_{A\subseteq V_p}$ satisfies 
	\[
		\PP\bigl(|X-\EE X|\ge \xi m^k\bigr)\le 3\exp\left(-\frac{\xi^2m}{12k^2}\right)
	\]
	for every $\xi\in (0, 1)$.
\end{cor}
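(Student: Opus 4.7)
My plan is to derive Corollary~\ref{cor:1840} from Lemma~\ref{lem:2726} combined with a Chernoff estimate on the binomial $|V_p|$, treating the two tails separately via a complementary-weight trick. Writing $w'(A)=1-w(A)\in[0,1]$ and
\[
	X' \;=\; \sum_{A\in V^{(k)}} w'(A)\,\Ind_{A\subseteq V_p},
\]
gives the identity $X+X'=\binom{|V_p|}{k}$, hence
\[
	X-\EE X \;=\; \bigl(\tbinom{|V_p|}{k}-\EE\tbinom{|V_p|}{k}\bigr) \;-\; \bigl(X'-\EE X'\bigr).
\]
This reduces the upper tail of $X$ to the upper deviation of $\tbinom{|V_p|}{k}$ combined with the lower tail of $X'$, both of which are amenable to the same tools used for the lower tail of $X$.

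My first step is to bound the Janson quantity $\Delta$ attached to $X$; the same bound then applies verbatim to the analogous $\Delta'$ attached to $X'$, since both weight systems take values in $[0,1]$. Grouping ordered pairs $(A,B)\in(V^{(k)})^2$ by $j=|A\cap B|\in\{1,\dots,k\}$ and using $|A\cup B|=2k-j$ together with $\binom{|V|}{\ell}p^\ell\le m^\ell/\ell!$ gives
\[
	\Delta \;\le\; \sum_{j=1}^{k}\binom{|V|}{k}\binom{k}{j}\binom{|V|-k}{k-j}\,p^{2k-j}
		 \;\le\; \frac{m^k}{k!}\sum_{j=1}^{k}\binom{k}{j}\,\frac{m^{k-j}}{(k-j)!},
\]
and since the $j=1$ term dominates, a routine estimate yields $\Delta\le 6k^2 m^{2k-1}$ (in fact much smaller, but this loose bound suffices). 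Applied with $t=\xi m^k$, Lemma~\ref{lem:2726} immediately gives the lower tail bound
\[
	\PP(X\le \EE X-\xi m^k)\;\le\;\exp\!\bigl(-\xi^2 m^{2k}/(2\Delta)\bigr)\;\le\;\exp\!\bigl(-\xi^2 m/(12 k^2)\bigr).
\]

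For the upper tail, I choose $t_1=t_2=\xi m^k/2$ so that $\{X\ge \EE X+\xi m^k\}$ is contained in the union of $\{\tbinom{|V_p|}{k}\ge\EE\tbinom{|V_p|}{k}+t_1\}$ and $\{X'\le\EE X'-t_2\}$. A direct computation with $(1+\delta)^k$ shows that, in the nontrivial regime $m\gg k^2/\xi^2$ (outside which the right-hand side of Corollary~\ref{cor:1840} exceeds~$1$ and the claim is vacuous), the first event is contained in $\{|V_p|\ge(1+\xi/(2k))m\}$, whose probability is bounded by $\exp(-\xi^2 m/(12 k^2))$ via the multiplicative Chernoff bound on $|V_p|\sim\mathrm{Bin}(|V|,p)$. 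The second event is controlled by Lemma~\ref{lem:2726} applied to $X'$ and its $\Delta'$ estimate. Summing the three contributions produces the claimed bound $3\exp(-\xi^2 m/(12 k^2))$.

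The main obstacle is coordinating constants so that each of the three exponents fits inside the target $\xi^2 m/(12 k^2)$. The $\Delta$-bound leaves ample room for the two Janson applications, but the Chernoff step on $|V_p|$ is essentially tight; this is what forces the factor $k^2$ in the denominator of the target exponent, cushioning the quadratic loss incurred when converting an $\varepsilon$-perturbation of $|V_p|$ into a $k\varepsilon$-perturbation of $\tbinom{|V_p|}{k}$.
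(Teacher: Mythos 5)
Your approach is essentially the paper's own: a one-sided weighted-Janson bound for the lower tail, then the complementary-weight trick $w'=1-w$ together with the identity $X+X'=\binom{|V_p|}{k}$ and a Chernoff bound on $|V_p|$ for the upper tail. The decomposition $X-\EE X = \bigl(\binom{|V_p|}{k}-\EE\binom{|V_p|}{k}\bigr)-\bigl(X'-\EE X'\bigr)$ is a slightly more symmetric way of writing what the paper does by applying Janson to $Y=\binom{|V_p|}{k}-X$ with parameter $\xi/2$ and then intersecting with the event $|V_p|\le m^+$, $m^+ = (1+\xi/(2k))m$; both hinge on the same two facts, namely $\binom{m^+}{k}\le p^k\binom{|V|}{k}+\tfrac{\xi}{2}m^k$ (which the paper establishes unconditionally via the mean value theorem) and a multiplicative Chernoff bound giving $\exp(-\xi^2 m/(12k^2))$.

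There is, however, a constant-level slip that means the argument as written does not quite deliver the stated bound. With your estimate $\Delta\le 6k^2m^{2k-1}$, applying Lemma~\ref{lem:2726} to $X'$ with $t_2=\xi m^k/2$ gives $\exp\bigl(-t_2^2/(2\Delta')\bigr)=\exp\bigl(-\xi^2 m/(48k^2)\bigr)$, which is \emph{weaker} than the target $\exp\bigl(-\xi^2 m/(12k^2)\bigr)$ by a factor of $4$ in the exponent; summing your three contributions then only gives $3\exp(-\xi^2 m/(48k^2))$. To land on the claimed constant you need $\Delta\le \tfrac{3}{2}k^2 m^{2k-1}$, so the ``loose bound suffices'' remark is not accurate. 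A cleaner estimate does fix this: bounding the number of ordered pairs with $|A\cup B|=2k-i$ by $|V|^{2k-i}$ and using $p=m/|V|$ gives $\Delta\le\sum_{i=1}^{k}|V|^{2k-i}p^{2k-i}\le km^{2k-1}$, as in the paper, which makes all three exponents at most $-\xi^2 m/(12k^2)$. You should also spell out the ``direct computation with $(1+\delta)^k$'' showing $\binom{m^+}{k}\le \EE\binom{|V_p|}{k}+\tfrac{\xi}{2}m^k$ rather than appeal to a regime in which the statement is vacuous; the paper's mean-value-theorem argument shows this holds for all $m\ge k$, so the caveat is unnecessary once done carefully.
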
	
	
\begin{proof}
	In order to make Lemma~\ref{lem:2726} applicable we set $w(A)=0$ for 
	every $A\in\powerset(V)\setminus V^{(k)}$. Now for $t=\xi m^k$ we obtain
	\begin{equation}\label{eq:2734}
		\PP\bigl(X\le \EE X-\xi m^k\bigr) 
		\le 
		\exp\left(-\frac{\xi^2m^{2k}}{2\Delta}\right)\,,
	\end{equation}
	where 
	\[
		\Delta
		=
		\sum_{\substack{A, B\in V^{(k)}\\ A\cap B\ne \emptyset}}w(A)\,w(B)\,\PP(A\cup B\subseteq V_p)
		\le 
		\sum_{\substack{A, B\in V^{(k)}\\ A\cap B\ne \emptyset}} p^{|A\cup B|}\,.
	\]
	Since for every $i\in [k]$ there are at most $|V|^{2k-i}$ 
	pairs $(A, B)\in V^{(k)}\times V^{(k)}$ with the property $|A\cup B|=2k-i$, 
	we are thus lead to the upper bound 
	\[
		\Delta
		\le
		\sum_{i=1}^k |V|^{2k-i} p^{2k-i}
		\le 
		km^{2k-1}\,.
	\]
	Therefore~\eqref{eq:2734} implies 
	\begin{equation}\label{eq:2737}
		\PP\bigl(X\le \EE X-\xi m^k\bigr) 
		\le 
		\exp\left(-\frac{\xi^2m}{2k}\right)
	\end{equation}
	and to conclude the argument it suffices to prove 
	\begin{equation}\label{eq:2739}
		\PP\bigl(X\ge \EE X+\xi m^k\bigr) 
		\le 
		2\exp\left(-\frac{\xi^2m}{12k^2}\right)\,.
	\end{equation}

	To this end we apply~\eqref{eq:2737} to the weight function $\wh{w}(A)=1-w(A)$
	and to $\xi/2$ instead of~$\xi$, thus learning that the random variable
	\[
		Y
		=
		\sum_{A\in V^{(k)}}\bigl(1-w(A)\bigr)\Ind_{A\subseteq V_p}
		=
		\binom{|V_p|}k - X
	\]
	satisfies
	\[
		\PP\bigl(Y\le \EE Y-\tfrac 12\xi m^k\bigr) 
		\le 
		\exp\left(-\frac{\xi^2m}{8k}\right)\,.
	\]
	Rewriting this in terms of $X$ and taking into account that the expected value 
	of $\binom{|V_p|}k$ is~$p^k\binom{|V|}k$ we obtain
	\begin{equation}\label{eq:2757}
		\PP\left(X\ge \EE X + \binom{|V_p|}k - p^k\binom{|V|}k + \frac \xi2 m^k\right)
		\le
		\exp\left(-\frac{\xi^2m}{8k}\right)\,.
	\end{equation}
	As we shall prove below, the number 
	\[
		m^+=m\left(1+\frac\xi{2k}\right)
	\]
	satisfies 
	\begin{equation}\label{eq:2809}
		\binom{m^+}{k}\le p^k\binom{|V|}k+\frac \xi2 m^k\,.
	\end{equation}
	Assuming this estimate for a moment, we conclude from~\eqref{eq:2757} that 
	\begin{multline*}
		\PP\bigl(X\ge \EE X+\xi m^k \text{ and } |V_p|\le m^+\bigr)\\
		\le
		\PP\left(X\ge \EE X+\binom{m^+}{k}-p^k\binom{|V|}k+\frac\xi2 m^k 
			\text{ and } |V_p|\le m^+\right)
		\le 
		\exp\left(-\frac{\xi^2m}{8k}\right)\,.
	\end{multline*}
	Together with Chernoff's inequality this yields 
	\begin{align*}
			\PP\bigl(X\ge \EE X+\xi m^k\bigr)
			&\le 
			\PP(|V_p|>m^+)
			+
			\PP\bigl(X\ge \EE X+\xi m^k \text{ and } |V_p|\le m^+\bigr) \\
			&\le 
			2\exp\left(-\frac{\xi^2m}{12k^2}\right)\,,
	\end{align*}
	which concludes the proof of~\eqref{eq:2739} and, hence, of Corollary~\ref{cor:1840}.	
	
	Now it remains to deal with~\eqref{eq:2809}. Since $p=m/|V|\le 1$ we have $p(|V|-j)\ge m-j$
	for every $j\in [0, k-1]$ and multiplying these estimates we 
	infer $p^k\binom{|V|}k\ge \binom mk$. Thus it suffices to prove 
	\begin{equation}\label{eq:1852}
		\binom{m^+}k-\binom mk \le \frac\xi2 m^k\,.
	\end{equation}
	Applying the mean value theorem to the increasing function $x\longmapsto\binom xk$
	we obtain
	\[
		\binom{m^+}k-\binom mk 
		\le 
		\frac{(m^+-m)(m^+)^{k-1}}{(k-1)!}\,,
	\]
	so~\eqref{eq:1852} is a consequence of 
	\[
		\left(1+\frac\xi{2k}\right)^{k-1}\le k!\,,
	\]
	which is clear for $k=1$ and which for $k\ge 2$ follows from 
	\[
		\left(1+\frac\xi{2k}\right)^{k-1}
		\le 
		\eu^{(k-1)/2k}
		\le 
		\sqrt{\eu}
		\le 
		2\,. \qedhere
	\]
\end{proof}	

The following consequence of this result is utilised multiple times 
in Section~\ref{sec:long_path}.

\begin{cor}\label{cor:1041}
	Let $m\ge k$ and $M$ be positive integers, and let $\eta\in \big(0, \frac{1}{2k}\big)$. Suppose that~$V$ 
	is a finite set and that 
	\[
		V=B_1\dcup\ldots\dcup B_\nu\dcup Z
	\]
	is a partition with $|B_1|=\ldots =|B_\nu|=M<\eta |V|$, $|Z|<\eta |V|$, and $\nu\ge m$. 
	Let $\cS\subseteq \{B_1, \ldots, B_\nu\}$ be an $m$-element subset chosen uniformly
	at random and set $S=\bigcup \cS$.
	\begin{enumerate}[label=\alabel]
		\item\label{it:1041a} If $Q\subseteq V^k$ has size $|Q|=d |V|^k$, then 
			\[
				\PP\bigl(\big||Q\cap S^k|-d(Mm)^k\big|\ge \xi (Mm)^k\bigr) 
				\le 
				12\sqrt{m}\exp\left(-\frac{\xi^2m}{48k^{2k+2}}\right)
			\]
			holds for every real $\xi$ with $\max(8k^2\eta, 16k^2/m) < \xi < 1$.
		\item\label{it:1041b} Similarly, if $G$ denotes a $k$-uniform hypergraph with vertex 
			set $V$ and $d|V|^k/k!$ edges, then 
			\[
				\PP\bigl(\big|e_G(S)-d(Mm)^k/k!\big|\ge \xi (Mm)^k/k!\bigr) 
				\le 
				12\sqrt{m}\exp\left(-\frac{\xi^2m}{48k^{2k+2}}\right)
			\]
			holds for every $\xi$ with $\max(8k^2\eta, 16k^2/m) < \xi < 1$.
	\end{enumerate} 
\end{cor}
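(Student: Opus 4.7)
Both parts follow the same template; I describe (a), with (b) entirely analogous after replacing ordered $k$-tuples by unordered $k$-subsets. The plan is to establish concentration in the Bernoulli block-sampling model and then transfer to the uniform $m$-subset model by conditioning on the sample size. Let $\cS^\circ \subseteq \{B_1,\dots,B_\nu\}$ include each block independently with probability $p := m/\nu$, and set $S^\circ := \bigcup\cS^\circ$. Tuples in $Q$ with a coordinate in $Z$ contribute $0$, since $S^\circ\cap Z = \emptyset$. For $v \in Q$ avoiding $Z$, let $J(v) \subseteq [\nu]$ be the set of blocks touched by its coordinates; splitting by $j := |J(v)| \in [k]$ gives
\[
    |Q\cap (S^\circ)^k| = \sum_{j=1}^{k} X_j, \qquad X_j := \sum_{A\in [\nu]^{(j)}} q_j(A)\,\Ind_{A\subseteq \cS^\circ},
\]
where $q_j(A) \leq j!\, S(k,j)\, M^k \leq k^k M^k$ denotes the number of such $v$ with $J(v) = A$.

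Normalising by $k^k M^k$, Corollary~\ref{cor:1840} applied on the universe $[\nu]$ at rate $p$ (with the ``$k$'' there instantiated to $j$) yields for each $j$ and each $\xi_j \in (0,1)$
\[
    \PP\bigl(|X_j - \EE X_j|\geq \xi_j\, k^k M^k m^j\bigr) \leq 3\exp\bigl(-\xi_j^2 m/(12 j^2)\bigr).
\]
For $j=k$ choose $\xi_k := \xi/(2k^{k+1})$, giving deviation $\xi(Mm)^k/(2k)$ with failure probability at most $3\exp(-\xi^2 m/(48 k^{2k+4}))$. For $j<k$ the expectation $\EE X_j \leq S(k,j)m^j M^k$ is already small thanks to the Touchard/Bell identity $\sum_{j<k} S(k,j) m^j = O(k^2) m^{k-1}$, whence $\sum_{j<k}\EE X_j = O(k^2)(Mm)^k/m$; choosing $\xi_j$ so that $\xi_j k^k M^k m^j = 2\EE X_j$ yields a separate concentration of $X_j$ around $\EE X_j$ with an exponent depending only on $k$. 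A union bound over $j\in[k]$ then controls the deviation of $X := |Q\cap(S^\circ)^k|$ from $\EE X$.

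To compare $\EE X$ with $d(Mm)^k$: the number of $v\in Q$ with pairwise distinct blocks and no $Z$-coordinate differs from $|Q|$ by at most $(k|Z| + \binom{k}{2}\nu M^2)|V|^{k-1} \leq k^2\eta |V|^k$ (using $|Z|, M\leq \eta |V|$ and $\nu M \leq |V|$), and $p^k = (Mm/\nu M)^k$ agrees with $(Mm/|V|)^k$ up to a factor $1+O(k\eta)$ (using $\nu M \geq (1-\eta)|V|$); combined, these give $\EE X_k = d(Mm)^k + O(k^2\eta)(Mm)^k$. Together with the $O(k^2)(Mm)^k/m$ bound on $\sum_{j<k}\EE X_j$, the hypotheses $\xi > \max(8k^2\eta, 16k^2/m)$ ensure $|\EE X - d(Mm)^k| \leq \xi(Mm)^k/4$. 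Finally, $|\cS^\circ|$ is binomial with mean $m$, so Stirling's formula gives $\PP(|\cS^\circ| = m) \geq c/\sqrt{m}$ for an absolute $c>0$; conditioning on this event recovers the uniform distribution on $m$-subsets, so Bernoulli-model probabilities inflate by at most $\sqrt{m}/c$, yielding the claim after a harmless absorption of constants into the exponent. The main technical obstacle is the expectation comparison: the subleading $j<k$ contributions must be controlled via the Touchard/Bell bound $O(k^2 m^{k-1})$ rather than the naive $k^{k+1} m^{k-1}$, so that the slack $\xi > 16k^2/m$ still suffices for every $k$.
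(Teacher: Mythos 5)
Your overall architecture (pass to the Bernoulli block-sampling model, apply Corollary~\ref{cor:1840}, then transfer back by conditioning/Pittel) matches the paper's, and the treatment of the crossing tuples ($j=k$) is essentially the paper's. But the handling of the non-crossing contributions ($j<k$) contains a genuine gap. You bound $\EE X_j \le S(k,j)\,m^j M^k$ in the Bernoulli model and then assert ``$\sum_{j<k}S(k,j)m^j = O(k^2)\,m^{k-1}$''. This is not an instance of the Touchard/Bell identity. The correct identity is the \emph{falling-factorial} one, $\sum_j S(k,j)\,m^{\underline j}=m^k$, which does give $\sum_{j<k}S(k,j)\,m^{\underline j}=m^k-m^{\underline k}\le\binom k2 m^{k-1}$; but replacing $m^{\underline j}$ by $m^j$ destroys this. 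In the Bernoulli model one really does get the power, since $\binom{\nu}{j}p^j j!=\nu^{\underline j}(m/\nu)^j\to m^j$ as $\nu\to\infty$, and $\nu$ may be arbitrarily large compared to $m$. Quantitatively, $S(k,k-i)\approx\dfrac{k^{2i}}{2^i i!}$ for $i\ll k$, so the term $S(k,k-i)\,m^{k-i}$ exceeds its predecessor by a factor $\approx k^2/(2(i{+}1)m)$; when $m=k$ this exceeds $1$ for all $i\lesssim k/2$, the sum is dominated by $i\approx k/2$, and $\sum_{j<k}S(k,j)m^j$ is of order $e^{k/2}m^{k-1}$ — exponentially larger than the budget $16k^2 m^{k-1}$ that $\xi>16k^2/m$ buys you. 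So your argument only works for $m\gtrsim k^2$, whereas the statement allows $m\ge k$.

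The paper sidesteps this entirely: it does \emph{not} try to control $\EE X_j$ for $j<k$. It applies Corollary~\ref{cor:1840} only to the weight coming from the crossing tuples $Q_\circ$, and bounds the contribution of $Q\setminus Q_\circ$ \emph{deterministically} on the event $|\cS_p|\le(1+1/k)m$ (controlled by Chernoff). Once the number of selected blocks is at most $(1+1/k)m$, the number of non-crossing $k$-tuples inside $A=\bigcup\cS_p$ is at most $\binom k2 M|A|^{k-1}$ by a direct ``choose the two coincident positions, their shared block, the two values, the remaining coordinates'' count; this is exactly the falling-factorial bound $\le(|\cS_p|^k-|\cS_p|^{\underline k})M^k\le\binom k2(1+1/k)^{k-1}m^{k-1}M^k < 4k^2 m^{k-1}M^k$, and it works for all $m\ge k$. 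That is the missing idea: a deterministic bound conditional on $|\cS_p|$ rather than a per-$j$ concentration of $X_j$. Indeed, for $Q=V^k$ one has $X=(M|\cS_p|)^k$ and $\EE X = M^k\,\EE|\cS_p|^k$ can be far above $(Mm)^k$ when $m\approx k$, so no expectation-based argument can give the stated bound. A secondary point: your normalisation by $k^kM^k$ and choice $\xi_k=\xi/(2k^{k+1})$ give an exponent $\xi^2m/(48k^{2k+4})$, missing the stated $\xi^2m/(48k^{2k+2})$ by a factor $k^2$; the paper normalises by $k!$ and applies Corollary~\ref{cor:1840} with $\xi/(2k!)$, which recovers the claimed exponent.
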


\begin{proof}
	Notice that~\ref{it:1041a} implies~\ref{it:1041b}. Indeed given a $k$-uniform hypergraph $G$
	we apply~\ref{it:1041a} to the ordered version of its set of edges defined by
	\[
		Q=\bigl\{(x_1, \ldots, x_k)\in V^k\colon \{x_1, \ldots, x_k\}\in E(G)\bigr\}
	\]
	and we obtain~\ref{it:1041b} immediately. 
	
	So it remains to verify~\ref{it:1041a}. Intending to invoke Corollary~\ref{cor:1840}
	we move from the hypergeometric distribution involved in choosing the set $\cS$
	to a binomial distribution, where we include every block $B_i$ independently from 
	the other ones with probability $p=m/\nu$. For this transition we introduce the 
	following notation.  
	
	Write $\ccB=\{B_1, \ldots, B_\nu\}$ for the set of {\it blocks} and consider the event
	\[
		\gX=\left\{\cS\subseteq \ccB\colon \text{the set $S=\bigcup \cS$ satisfies 
			$\big||Q\cap S^k|-d(Mm)^k\big|\ge \xi (Mm)^k$}\right\}\,.
	\]
	Let $\cS_m\subseteq \ccB$ be an $m$-element set chosen uniformly at random and 
	let $\cS_{p}\subseteq \ccB$ be a binomial subset containing every block independently and 
	uniformly at random with probability $p=m/\nu$. 
	Pittel's inequality (see, e.g., \cite{JLR00}*{eq.\ (1.6)}) informs us that -- without any 
	assumptions on the event $\gX\subseteq \powerset(\ccB)$ -- we have  
	\[
			\PP\left(\cS_m\in \gX\right)
			\le 
			3\sqrt{m}\,\,\PP\left(\cS_{p}\in \gX\right)\,,
	\]
	so it suffices to show 
	\begin{equation}\label{eq:2157}
		 \PP\left(\cS_{p}\in \gX\right)\le 4\exp\left(-\frac{\xi^2m}{48k^{2k+2}}\right)\,.
	\end{equation}
	We will exploit that most $k$-tuples in $Q$ are {\it crossing} in the sense that their 
	entries belong to $k$ distinct blocks. More precisely, if $Q_\circ\subseteq Q$ denotes
	the set of theses crossing $k$-tuples, we contend that 
	\begin{equation}\label{eq:1201}
		\left(d-\frac\xi4\right)(M\nu)^k 
		\le 
		|Q_\circ|
		\le 
		\left(d+\frac\xi4\right)(M\nu)^k\,.
	\end{equation}

	To justify the lower bound we remark that at most $k|Z||V|^{k-1}$ members
	of $Q$ can have an entry in $Z$ and at most $k^2M|V|^{k-1}$ members of $Q$ 
	can have two entries from the same block, whence 
	\[
		|Q_\circ|
		\ge 
		|Q|-k|Z||V|^{k-1}-k^2M|V|^{k-1} 
		\ge
		(d-k\eta-k^2\eta)|V|^k
		\ge
		\left(d-\frac\xi4\right)(M\nu)^k\,.
	\]
	For the upper bound we exploit 
	\[
		(M\nu)^k
		=
		|V\setminus Z|^k\ge (1-\eta)^k|V|^k
		\ge 
		(1-k\eta)|V|^k\,,
	\]
	which yields 
	\[
		|Q_\circ|
		\le 
		|Q|
		=
		d |V|^k
		\le
		\left(d+\frac\xi4\right)(1-k\eta)|V|^k
		\le
		\left(d+\frac\xi4\right)(M\nu)^k\,.
	\]
	Thereby~\eqref{eq:1201} is proved. Now we decompose 
	\[
		|Q_\circ|=\sum_{\{i(1), \ldots, i(k)\}\in [\nu]^{(k)}} W\bigl(i(1), \ldots, i(k)\bigr)\,,
	\]
	where for every $k$-element set $\{i(1), \ldots, i(k)\}\in [\nu]^{(k)}$ the number 
	of $k$-tuples in $Q_\circ$ with one entry from each of the blocks $B_{i(1)}, \ldots, B_{i(k)}$ 
	is denoted by $W\bigl(i(1), \ldots, i(k)\bigr)$. These numbers are bounded by  
	\[
		 0\le W\bigl(i(1), \ldots, i(k)\bigr)\le k!M^k\,.
	\]
	Bearing in mind that Corollary~\ref{cor:1840} requires normalised weights, we set  
	\[
		 w\bigl(i(1), \ldots, i(k)\bigr)=\frac{W\bigl(i(1), \ldots, i(k)\bigr)}{k!M^k}
	\]
	for every $k$-set $\{i(1), \ldots, i(k)\}\in [\nu]^{(k)}$. As a consequence 
	of~\eqref{eq:1201}, the expectation of the random variable 
	\[
		X
		=
		\sum_{A\in [\nu]^{(k)}} w(A)\Ind_{A\subseteq [\nu]_p}
		=
		\frac{|S_p^k\cap Q_\circ|}{k!M^k}\,,
	\]
	where $S_p=\bigcup \cS_p$, is 
	\[
		\EE X=\frac{|Q_\circ|p^k}{k!M^k}=\frac{(d\pm \xi/4)m^k}{k!}\,.
	\]
	Therefore, Corollary~\ref{cor:1840} applied to $\xi/(2k!)$ here in place of $\xi$
	there yields 
	\[
		\PP\left(\big||S_p^k\cap Q_\circ|-d(Mm)^k\big|\ge (3/4)\xi(Mm)^k\right)
		\le
		3\exp\left(-\frac{\xi^2m}{48k^{2k+2}}\right)\,.
	\]
	So to conclude the proof of \eqref{eq:2157} is certainly suffices to show 
	\[
		\PP\left(\big|S_p^k\cap (Q\setminus Q_\circ)\big| \ge (1/4)\xi(Mm)^k\right)
		\le
		\exp\left(-\frac{m}{48k^2}\right)\,.
	\]
	Now Chernoff's inequality yields
	\[
		\PP\bigl(|\cS_p| > (1+1/k)m\bigr)
		\le 
		\exp\left(-\frac{m}{48k^2}\right)
	\]
	and for this reason it suffices to prove the deterministic statement that for 
	every $\ccA\subseteq \ccB$ with $|\ccA|\le m(1+1/k)$ the set $A=\bigcup \ccA$ satisfies 
	\[
		 |A^k\cap (Q\setminus Q_\circ)| 
		 <
		 (1/4)\xi(Mm)^k\,.
	\]
	Since the $k$-tuples counted on the left side contain two entries from the same block, 
	we have indeed
	\begin{align*}
		 |A^k\cap (Q\setminus Q_\circ)| 
		 &\le
		 k^2M|A|^{k-1}
		 \le
		 (1+1/k)^{k-1}k^2M(Mm)^{k-1} \\
		 &<
		 (4k^2/m)(Mm)^k
		 <
		 (\xi/4)(Mm)^k\,,
	\end{align*}
	where the last inequality uses our assumed lower bound on $\xi$.
\end{proof}

\subsection*{Acknowledgement} We would like to thank the referee for 
dealing with this article very efficiently and for many constructive 
suggestions. 

\begin{bibdiv}
\begin{biblist}
			
\bib{BR}{article}{
	author={Blakley, G. R.},
	author={Roy, Prabir},
	title={A H\"older type inequality for symmetric matrices with nonnegative
					entries},
	journal={Proc. Amer. Math. Soc.},
	volume={16},
	date={1965},
	pages={1244--1245},
	issn={0002-9939},
	review={\MR{0184950}},
}

\bib{dirac}{article}{
	author={Dirac, G. A.},
	title={Some theorems on abstract graphs},
	journal={Proc. London Math. Soc. (3)},
	volume={2},
	date={1952},
	pages={69--81},
	issn={0024-6115},
	review={\MR{0047308 (13,856e)}},
}	
			
\bib{E64}{article}{
   author={Erd\H{o}s, P.},
   title={On extremal problems of graphs and generalized graphs},
   journal={Israel J. Math.},
   volume={2},
   date={1964},
   pages={183--190},
   issn={0021-2172},
   review={\MR{183654}},
   doi={10.1007/BF02759942},
}	

\bib{ES83}{article}{
   author={Erd\H os, Paul},
   author={Simonovits, Mikl\'os},
   title={Supersaturated graphs and hypergraphs},
   journal={Combinatorica},
   volume={3},
   date={1983},
   number={2},
   pages={181--192},
   issn={0209-9683},
   review={\MR{726456}},
}

\bib{HZ16}{article}{
   author={Han, Jie},
   author={Zhao, Yi},
   title={Forbidding Hamilton cycles in uniform hypergraphs},
   journal={J. Combin. Theory Ser. A},
   volume={143},
   date={2016},
   pages={107--115},
   issn={0097-3165},
   review={\MR{3519818}},
   doi={10.1016/j.jcta.2016.05.005},
}
			
\bib{JLR00}{book}{
	author={Janson, Svante},
	author={{\L}uczak, Tomasz},
	author={Ruci{\'n}ski, Andrzej},
	title={Random graphs},
	series={Wiley-Interscience Series in Discrete Mathematics and
				Optimization},
	publisher={Wiley-Interscience, New York},
	date={2000},
	pages={xii+333},
	isbn={0-471-17541-2},
	review={\MR{1782847}},
	doi={10.1002/9781118032718},
}
			
\bib{KK99}{article}{
   author={Katona, Gyula Y.},
   author={Kierstead, H. A.},
   title={Hamiltonian chains in hypergraphs},
   journal={J. Graph Theory},
   volume={30},
   date={1999},
   number={3},
   pages={205--212},
   issn={0364-9024},
   review={\MR{1671170}},
   doi={10.1002/(SICI)1097-0118(199903)30:3<205::AID-JGT5>3.3.CO;2-F},
}

\bib{PR}{misc}{
	author={Polcyn, J.},
	author={Reiher, Chr.},
	note={Personal communication},
	date={2018},
}

\bib{TYH}{article}{
	author={Polcyn, Joanna},
   author={Reiher, Chr.},
   author={Sch\"ulke, Bjarne},
   title={On Hamiltonian cycles in hypergraphs with dense link graphs},
	note={Preprint},
}

\bib{old}{article}{
   author={Reiher, Chr.},
   author={R\"{o}dl, Vojt\v{e}ch},
   author={Ruci\'{n}ski, Andrzej},
   author={Schacht, Mathias},
   author={Szemer\'{e}di, Endre},
   title={Minimum vertex degree condition for tight Hamiltonian cycles in
   3-uniform hypergraphs},
   journal={Proc. Lond. Math. Soc. (3)},
   volume={119},
   date={2019},
   number={2},
   pages={409--439},
   issn={0024-6115},
   review={\MR{3959049}},
   doi={10.1112/plms.12235},
}

\bib{RR-survey}{article}{
	author={R{\"o}dl, Vojt{\v{e}}ch},
	author={Ruci{\'n}ski, Andrzej},
	title={Dirac-type questions for hypergraphs---a survey (or more problems
		for Endre to solve)},
	conference={
		title={An irregular mind},
	},
	book={
		series={Bolyai Soc. Math. Stud.},
		volume={21},
		publisher={J\'anos Bolyai Math. Soc., Budapest},
	},
	date={2010},
	pages={561--590},
	review={\MR{2815614 (2012j:05008)}},
	doi={10.1007/978-3-642-14444-8\_16},
}
						
\bib{rrs3}{article}{
   author={R\"{o}dl, Vojtech},
   author={Ruci\'{n}ski, Andrzej},
   author={Szemer\'{e}di, Endre},
   title={A Dirac-type theorem for 3-uniform hypergraphs},
   journal={Combin. Probab. Comput.},
   volume={15},
   date={2006},
   number={1-2},
   pages={229--251},
   issn={0963-5483},
   review={\MR{2195584 (2006j:05144)}},
   doi={10.1017/S0963548305007042},
}	

\bib{RRS08}{article}{
   author={R\"{o}dl, Vojtech},
   author={Ruci\'{n}ski, Andrzej},
   author={Szemer\'{e}di, Endre},
   title={An approximate Dirac-type theorem for $k$-uniform hypergraphs},
   journal={Combinatorica},
   volume={28},
   date={2008},
   number={2},
   pages={229--260},
   issn={0209-9683},
   review={\MR{2399020}},
   doi={10.1007/s00493-008-2295-z},
}
		
\bib{3}{article}{
   author={R\"{o}dl, Vojtech},
   author={Ruci\'{n}ski, Andrzej},
   author={Szemer\'{e}di, Endre},
   title={Dirac-type conditions for Hamiltonian paths and cycles in 3-uniform hypergraphs},
   journal={Adv. Math.},
   volume={227},
   date={2011},
   number={3},
   pages={1225--1299},
   issn={0001-8708},
   review={\MR{2799606 (2012d:05213)}},
   doi={10.1016/j.aim.2011.03.007},
}

\bib{Sz-surv}{article}{
   author={Szemer\'{e}di, Endre},
   title={Is laziness paying off? (``Absorbing'' method)},
   conference={
      title={Colloquium De Giorgi 2010--2012},
   },
   book={
      series={Colloquia},
      volume={4},
      publisher={Ed. Norm., Pisa},
   },
   date={2013},
   pages={17--34},
   review={\MR{3089025}},
   doi={10.1007/978-88-7642-457-1\_3},
}
						
\bib{Zhao-sur}{article}{
   author={Zhao, Yi},
   title={Recent advances on Dirac-type problems for hypergraphs},
   conference={
      title={Recent trends in combinatorics},
   },
   book={
      series={IMA Vol. Math. Appl.},
      volume={159},
      publisher={Springer, [Cham]},
   },
   date={2016},
   pages={145--165},
   review={\MR{3526407}},
   doi={10.1007/978-3-319-24298-96},
}
				
\end{biblist}
\end{bibdiv}
\end{document}